\documentclass[a4paper,10pt]{amsart}
\usepackage[english]{babel}
\usepackage{a4wide}
\usepackage[utf8]{inputenc}
\usepackage{csquotes}
\usepackage{comment}
\usepackage{tikz}
\usepackage{ulem}
\usetikzlibrary{fit,backgrounds, calc}
\usetikzlibrary{matrix, graphs, positioning} 

\title[Approximate Message Passing for non-Symmetric random matrices]{Approximate Message Passing for general non-Symmetric random matrices}
\author[Gueddari et al.]{Mohammed-Younes Gueddari, Walid Hachem, Jamal Najim}

\date{\today\\
CNRS, Laboratoire d'informatique Gaspard Monge (LIGM / UMR 8049),
Universit\'e Gustave Eiffel, France} 




\usepackage{amssymb,amsmath,amscd,amsfonts,amsthm,bbm,mathrsfs,dsfont}
\usepackage{booktabs}
\usepackage[colorlinks=true, linkcolor=blue, citecolor=blue]{hyperref}
\usepackage{mathabx}
\usepackage{graphicx}
\usepackage{color}
\usepackage{accents}
\usepackage{subcaption}
\usepackage{enumerate,enumitem}
\usepackage{tensor} 
\usepackage{authblk}

\DeclareUnicodeCharacter{00A0}{~}

\newtheorem{theorem}{Theorem}[section]
\newtheorem{proposition}[theorem]{Proposition}
\newtheorem{lemma}[theorem]{Lemma}

\newtheorem{definition}{Definition}[section]
\newtheorem{remark}[definition]{Remark}

\newtheorem{assumption}{Assumption A-\hspace{-0.15cm}}

\setlength{\marginparwidth}{1.5cm}               

\DeclareMathOperator{\AMP}{AMP}
\DeclareMathOperator{\AMPZ}{AMP-Z}
\DeclareMathOperator{\AMPW}{AMP-W}
\DeclareMathOperator{\DE}{DE}
\DeclareMathOperator{\GOE}{GOE}

\newcommand{\1}{\mathbbm 1}
\newcommand{\T}{\top}

\newcommand{\CC}{\mathbb{C}}
\newcommand{\PP}{{{\mathbb P}}} 
\newcommand{\EE}{{{\mathbb E}}} 
\newcommand{\NN}{{{\mathbb N}}} 
\newcommand{\RR}{{{\mathbb R}}}

\newcommand{\cA}{{\mathcal A}} 
\newcommand{\cB}{{\mathcal B}}

\newcommand{\cF}{{\mathcal F}} 
\newcommand{\cG}{{\mathcal G}}

\newcommand{\cK}{{\mathcal K}} 
\newcommand{\cL}{{\mathcal L}} 
 
\newcommand{\cN}{{\mathcal N}}

\newcommand{\cR}{{\mathcal R}} 
\newcommand{\cQ}{{\mathcal Q}} 
\newcommand{\cS}{{\mathcal S}} 
\newcommand{\cT}{{\mathcal T}}

\newcommand{\cU}{{\mathcal U}}

\newcommand{\tW}{{\widetilde W}}

\newcommand{\chR}{{\widecheck R}}

\newcommand{\bu}{\bs u} 
\newcommand{\bv}{\bs v} 

\newcommand{\bx}{\bs x} 
\newcommand{\bcx}{\bs x} 
\newcommand{\cx}{\check x} 

\newcommand{\tX}{{\widetilde X}}

\newcommand{\by}{\bs y}

\newcommand{\bZ}{\bs Z} 
 
\newcommand{\cZ}{{\widecheck Z}}

\newcommand{\balpha}{\bs \alpha}

\newcommand{\bG}{\bs G}

\newcommand{\bU}{\bs U}

\newcommand{\bm}{{\bs m}} 

\newcommand{\bz}{\bs z} 
\newcommand{\tbz}{\tilde{\bs z}}

\newcommand{\Cmom}{C_{\text{mom}}}

%
\newcommand{\bs}{\boldsymbol}
%
\DeclareMathOperator*{\diag}{diag}
%
 
%
\newcommand{\eqlaw}{\stackrel{\cL}{=}}  
%

\newcommand{\toprobalong}{\xrightarrow[n\to\infty]{\mathbb P}}

\newcommand{\tolong}{\xrightarrow[n\to\infty]{}}

\makeatletter
\renewcommand\paragraph{\@startsection{paragraph}{4}{\z@}%
  {-3.25ex\@plus -1ex \@minus -.2ex}%
  {1.5ex \@plus .2ex}%
  {\normalfont\normalsize\bfseries}}
\makeatother


\begin{document}

\maketitle

\begin{abstract}

    Approximate Message Passing (AMP) algorithms are a family of iterative
algorithms based on large random matrices with the special property of tracking
the statistical properties 
of their iterates. They are used in various fields such as Statistical
Physics, Machine learning, Communication systems, Theoretical ecology, etc.

In this article we consider AMP algorithms based on non-Symmetric random
matrices with a general variance profile, possibly sparse, a general covariance
profile, and non-Gaussian entries. We hence substantially extend the results on
Elliptic random matrices that we developed in~\cite{gueddari2024elliptic}. From
a technical point of view, we enhance the combinatorial techniques developed in
Bayati et al. \cite{Bayati_2015} and in \cite{HACHEM2024104276}.

    Our main motivation is the understanding of equilibria of large food-webs described by Lotka-Volterra systems of ODE, in the continuation of the works of~\cite{HACHEM2024104276}, Akjouj et al. \cite{akjouj2024equilibria} and~\cite{gueddari2024elliptic}, but the versatility of the model studied might be of interest beyond these particular applications.

\end{abstract}

\section{Introduction}

Approximate Message Passing (AMP) refers to a class of iterative algorithms that are built around a large random matrix, producing at each step a high-dimensional $\mathbb{R}^n$-valued random vector ($n \gg 1$) whose elements' empirical distribution can be identified as $n$ goes to infinity. These algorithms take the following form 
$$
\bs{x}^{t+1} = W h_t(\bs{x}^t) - \{\text{corrective term}\}\,,
$$  
where $\bs{x}^t = (x_i^t)$ is the $n \times 1$ vector at iteration $t$, $W$ is a $n \times n$ random matrix, and $h_t(\bs{x}^t) = (h_t(x_i^t))_i$ is a vector based on the so-called \emph{activation function} $h_t:\mathbb{R} \to \mathbb{R}$. The corrective term, known as the Onsager term, is carefully defined to facilitate the description of the statistical properties of $\bs{x}^t$ as $n \to \infty$. 

In the fields of machine learning and statistical estimation, AMP algorithms were originally developed for studying compressed sensing and sparse signal recovery problems \cite{Donoho_2009,bayati2011dynamics}. They have since found applications across various fields, including high-dimensional estimation \cite{desh-abb-mon-17,lel-mio-ptrf19}, communication theory \cite{bar-krz-it17,rush-gre-ven-17}, statistical physics \cite{mon-siam-19}, theoretical ecology \cite{akjouj2024equilibria,HACHEM2024104276,gueddari2024elliptic}, etc. AMP algorithms have undergone extensive recent developments and the goal of this article is to extend the AMP framework to general non-symmetric random matrices $W$. 

In general, the random matrix model $W$ may differ depending on the considered application, and most of AMP algorithms focus on symmetric matrices. For instance, in the problem of low-rank information extraction from noisy data matrix, the goal is to estimate the $n\times 1$ signal $\bs{x}^\star$ from noisy observations  
\begin{equation}
\label{eq:estimationProblem}
    Y = \sqrt{\lambda} \bs{x}^\star (\bs{x}^\star)^\top + W\,,
\end{equation}
where $W$ is a random matrix. In \cite{deshpande2014informationtheoreticallyoptimalsparsepca} and \cite{montanari2019estimationlowrankmatricesapproximate}, the authors develop an AMP algorithm involving a symmetric matrix $ W=\frac{1}{\sqrt{n}}G$ where $G$ is drawn from the Gaussian Orthogonal Ensemble (GOE$(n)$) to study the problem \eqref{eq:estimationProblem}. More precisely, each entry $G_{ij}\sim {\mathcal N}(0,1 + 1_{(i=j)})$, where $1_{(i=j)}$ equals one if $i=j$ and zero else, and all the entries on and above the diagonal are independent. The $1/\sqrt{n}$ normalization factor is standard in Random Matrix Theory and has the effect to ensure that the spectral norm of $W$ is $\mathcal{O}(1)$.

In \cite{jav-mon-13, beh-ree-pmlr22, gui-ko-krz-zde-(arxiv)22,pak-jus-krz-23},  
the authors develop an AMP
algorithm involving a symmetric random matrix $W$ with a block-wise variance
profile $S$ to study the problem \eqref{eq:estimationProblem} in the case of
an inhomogeneous noise. More precisely, $W$ is now written as 
\begin{equation}
\label{eq:example-VP} W = \frac 1{\sqrt{n}}\, S^{\odot 1/2} \odot G\,,
\end{equation}
where $G\sim\GOE(n)$ and $S$ is a symmetric, deterministic, block-constant matrix of 
non-negative elements. Matrix $S$ has a finite number of rectangular blocks 
which dimensions scale with $n$, the elements of $S^{\odot 1/2}$ are 
the square roots of those of $S$, and $\odot$ is the Hadamard or entry-wise 
product. In the recent paper \cite{bao2023leave}, Bao et al. consider an 
AMP algorithm based on Gaussian matrices with a variance profile and provide 
non-asymptotic results. 

Our main motivation to develop AMP algorithms associated to new matrix models comes from theoretical ecology and the study of large Lotka-Volterra systems of ODEs.
In such models, the random matrix $W$ is used to model the interactions
between $n$ living species that coexist within an ecosystem, and the time
evolution of the abundances is described by the multi-dimensional Lotka-Volterra
differential equation. In \cite{akjouj2024equilibria},
Akjouj \textit{et al.}~consider the GOE model for the matrix of interactions,
and use an AMP approach to describe the statistical properties of the
equilibrium point of the resulting Lotka-Volterra dynamical system when this
equilibrium is globally stable.  Dealing with a more realistic interaction
matrix model, \cite{HACHEM2024104276} considers a symmetric random matrix
with a variance profile as in~\eqref{eq:example-VP}, with the main difference
that the variance profile matrix $S$ can be sparse. Including correlations
between the elements of the interaction matrix is an important feature in
theoretical ecology.  In this direction, a non-symmetric elliptic matrix $W$ is
considered in \cite{gueddari2024elliptic}, where each entry pair
$(\sqrt{n}W_{ij},\sqrt{n} W_{ji})$ is a standard two-dimensional centered
Gaussian vector with a covariance $\rho\in[-1,1]$, and where all the different
pairs are independent. 

All these cases are particular cases of the model we study in this article.

\subsection{The random matrix model} 
\label{subsec:MatrixModel}

The model under investigation here combines an arbitrary variance profile, possibly sparse, with a correlation profile. To this end, we first introduce the notion of a $T$-correlated matrix. Let $[n]=\{1,\cdots, n\}$.

\begin{definition}
    \label{def:corr_matrix}
    Let $T = (\tau_{ij})_{1\leq i,j\leq n}$ be a symmetric $n\times n$ matrix with entries in $[-1, 1]$. The $n\times n$ random matrix $X$ is $T$-correlated if
    \begin{itemize}
        \item[-] Every entry $X_{ij}$ is centered random variable with variance $1$.
        \item[-] For $(i,j)\in [n]^2$, $i<j$, the covariance matrix of the pair $(X_{ij}, X_{ji})$ is
              $$ \begin{pmatrix}
                      1         & \tau_{ij} \\
                      \tau_{ji} & 1
                  \end{pmatrix}. $$
        \item[-] The random elements in the set $\{X_{ii}, (X_{ij}, X_{ji}),\,(i,j)\in [n]^2,\, i<j\}$ are independent.
    \end{itemize}
\end{definition}
\begin{remark}
    Notice that the diagonal elements of $T$ are not specified in this definition. A natural convention could be to set $\tau_{ii} = 1$, as it represents the correlation of $X_{ii}$ with itself, but their exact values (as long as it is bounded) have no impact on the presented results.
\end{remark}

Let $X$ be a ${\mathbb R}^{n\times n}$--valued $T$-correlated matrix and $S =
(s_{ij} )_{i,j\in[n]} $ be a deterministic $n\times
n$ matrix with non-negative elements. The random matrix model considered in
this paper is
\begin{equation}\label{def:W}
W=S^{\odot 1/2} \odot X = \left(\sqrt{s_{ij}} X_{ij}\right)_{1\le i,j\le n}\, .
\end{equation}
Notice that the entries need not to be Gaussian and contrary to \eqref{eq:example-VP}, the normalization is embedded into matrix $S$. We refer to $S$ as the \emph{variance profile} of matrix $W$ and to $T$ as its \emph{correlation profile}. Such a model is fairly general as it encompasses most of the classical random matrix models (Wigner, Elliptic, Circular models) and many important features required in the applications (sparsity, variance profile, etc.).

\subsection{A primer to Approximate Message Passing}

For a random matrix $W$ such that $\sqrt{n}W\sim \textrm{GOE}(n)$, an AMP algorithm starting at $\bs{x}^0 =(x_0, \cdots, x_0)^\top$ using a set of Lipschitz activation functions $(h_t)_{t\geq 0}$ is given by the following recursion equation; for all $t\geq 0$,
\begin{equation}\label{eq:AMP-GOE} \bs{x}^{t+1} =  W h_t(\bs{x}^t) - b_t h_{t-1}(\bs{x}^{t-1}) \quad \text{where} \quad b_t = \frac{1}{n}\sum_{i=1}^n h_t'(x_i^t)\,,
\end{equation}
with the convention that $h_{-1} \equiv 0$.

The crucial term in this recursion is the Onsager term, i.e.
``$\operatorname{ONS}_t := b_t h_{t-1}(\bs{x}^{t-1})$" that we subtract from
the power method iteration term at each step $t$. The effect of the Onsager
term is that for a fixed $t$ and as $n\to\infty$, it ``cancels'' the dependence
due to the repeated use of matrix $W$ at each iteration: 
$$ 
\bs{x}^{t+1}  = W h_t(Wh_{t-1}(W\cdots)-\operatorname{ONS}_{t-1}) -\operatorname{ONS}_t\,. 
$$
With the correction of the Onsager term, the asymptotic behavior of $\bs{x}^t$ is similar to the behavior of $ \tilde{\bs{x}}^t$ generated with the ``power method iteration" but with a new sampled independent random matrix $W^t$ at each step $t$, i.e.
$$
\tilde{\bs{x}}^{t+1} = W^t h_t(\tilde{\bs{x}}^t) \qquad \text{with} \qquad \sqrt{n}W^t \overset{i.i.d.}{\sim} \GOE(n)\,.
$$
Notice that in the latter case, it is easy to characterize the asymptotic behavior of the empirical distribution $\mu^{ \tilde{\bs{x}}^t}$ of the entries of the vector $\tilde{\bs{x}}^t=(\tilde{x}^t_i)$, 
$$ 
\mu^{\tilde{\bs{x}}^t} = \frac{1}{n} \sum_{i=1}^n \delta_{\tilde{x}^t_i}\,. 
$$
Roughly speaking $\mu^{\bs{x}^t}\approx \mu^{\tilde{\bs{x}}^t}$ as $n\to
\infty$. Beware however that the correlation between consecutive iterations
$\bs{x}^t$ and $\bs{x}^{t+1}$ differs from the correlation between iterates
$\tilde{\bs{x}}^t$ and $\tilde{\bs{x}}^{t+1}$ which turn out to be
asymptotically decorrelated. 

Given the iterates $\bs{x}^1=(x^1_i),\cdots, \bs{x}^t=(x^t_i)$ produced by \eqref{eq:AMP-GOE}, the main result associated to AMP is the description of the limiting distribution of 
$$
\mu^{(\bs{x}^1,\cdots, \bs{x}^t)}:=\frac 1n \sum_{i=1}^n \delta_{\left(x^1_i,\cdots, x_i^t\right)}
$$
as $n\to\infty$ in terms of a multivariate Gaussian vector whose covariance matrix is described by the Density Evolution Equations.

\subsection{Density Evolution Equations}

Density Evolution (DE) equations are a set of recursive equations that define a sequence of deterministic, symmetric, positive semi-definite matrices, which are central objects in the analysis of AMP algorithms. 
These matrices are covariance matrices associated to multivariate normal distributions which describe the asymptotic behavior of the AMP iterates (and their correlations) as $n$ goes to infinity. 

Given a set of activation functions $h_t:\RR \to \RR$ and a initial constant vector $\bs{x}^0 = x_0 \bs{1}_n\in \RR^n$, the Density Evolution equations associated to the AMP \eqref{eq:AMP-GOE} with $\sqrt{n}W\sim \textrm{GOE}(n)$ is a sequence of $t\times t$ matrices $(R^t)_{t\in \NN^\star}$ defined recursively as follows,

$$ R^1 = \left(h(x_0)\right)^2 \qquad \text{and} \qquad R^{t+1} = \EE \begin{bmatrix}
    h_t(x_0) \\
    h_t(Z_1)\\
    \cdots \\
    h_t(Z_t)
\end{bmatrix}\begin{bmatrix}
    h_t(x_0) &
    h_t(Z_1) &
    \cdots &
    h_t(Z_t)
\end{bmatrix}\,, $$
where $(Z_1, \cdots, Z_{t})\sim \cN_{t}(0,R^{t})$. Notice that in particular, the variances $\sigma_t^2 = \EE\,Z^2_t$ satisfy a simple recursion equation given by: 
\begin{equation}
\label{eq:DEVariances}
    \sigma_0^2 = h^2_0(x_0) \qquad \text{and} \qquad \sigma_{t+1}^2 = \EE\,h^2_t(\sigma_t \xi)\qquad \textrm{where}\quad \xi\sim \cN(0,1)\,.
\end{equation}
With the family of covariance matrices $(R^t)$ at hand, we can express the limiting statistical properties of measure $\mu^{(\bs{x}^1,\cdots, \bs{x}^t)}$ which captures both the asymptotic properties of the iterates $\bs{x}^t$ and the dependence between the iterates $\bs{x}^1,\cdots, \bs{x}^t$:
$$
\mu^{(\bs{x}^1,\cdots, \bs{x}^t)}\quad \xrightarrow[n\to\infty]{weak,L^2}\quad  \cN_{t}(0,R^{t})\, 
$$
in probability (see \cite{feng2022unifying} for sharper convergence results). Stated differently, for any test functions $\varphi:\RR^t\to \RR$ and $\psi:\RR\to \RR$,
\begin{equation}\label{eq:conv-AMP-GOE}
\frac{1}{n}\sum_{i=1}^n \varphi(x_i^1,\cdots,x_i^t) \toprobalong \EE\,\varphi(Z_1,\cdots, Z_t)  \qquad\textrm{and}\qquad 
\frac 1n \sum_{i=1}^n \psi(x_i^t) \toprobalong \EE\,\psi(\sigma_t \xi)\ ,
\end{equation}
where $\xi\sim \cN(0,1)$, $\xrightarrow[]{\PP}$ stands for the convergence in probability and $(\sigma_t)_{t\geq 0}$ is a sequence of positive numbers defined recursively by \eqref{eq:DEVariances}.

In \cite{gueddari2024elliptic}, we show that the DE equations used to study an
AMP with an elliptic matrix do not depend on the correlation coefficient, the
latter being included in the formulation of the AMP recursion, and more
specifically in the Onsager term. In \cite{HACHEM2024104276}, the case of
a symmetric random matrix with a general variance profile $S$ is handled. 

In the case of a general variance profile, the description of the asymptotic behavior of the iterates becomes more involved and instead of having a multivariate Gaussian vector $(Z_1,\cdots, Z_t)$ we have a family of $n$-dimensional vectors $(\bs{Z}^1,\cdots, \bs{Z}^t)$.

In the following definition, we give a general description of the DE equations associated to a variance profile matrix $S$.
We now consider that the activation function depend on an additional parameter $\bs{\eta}$ and we no longer express the dependence in $t$
using a subscript, it is now included in the arguments of function $h$.

\begin{definition}
    \label{def:de}
    Let $\bs{x}^0=(x^0_i)\in \RR^n$ and $\bs{\eta}=(\eta_i) \in \RR^n$ be two deterministic vectors, $S = \left(s_{ij}\right)_{1\leq i,j \leq n}$ a matrix with non-negative elements and $h:\RR^2\times \NN \to \RR$ an activation function. 
    \begin{enumerate}[label=\alph*)]
    \item \emph{Initialization.} For any $i\in [n]$, define the non-negative numbers $H_i^0$ and $R_i^1$ as
    \begin{equation*}
        H_i^0 := h^2(x_i^0, \eta_i, 0) \qquad \text{and} \qquad R_i^1 := \sum_{j = 1}^{n} s_{ij}H_j^0\,.
    \end{equation*}
    Let $Z_i^1\sim\cN(0,R^1_i)$, assume that for all $i\in [n]$, the $Z_i^1$'s are independent and set 
    $$\bs{Z}^1=(Z^1_i)_{i\in [n]}\ .$$ 

    \item \emph{Step 1.} Let $\bs{Z}^1=(Z^1_i)_{i\in [n]}$ be given and $i\in[n]$ be fixed. Let 
    $$
    H_i^1=\EE \begin{bmatrix}
            h(x^{0}_i, \eta_i, 0) \\
            h(Z^{1}_i, \eta_i, 1) 
            \end{bmatrix}  \left[h(x^{0}_i, \eta_i, 0)\,,\ h(Z^{1}_i, \eta_i, 1)\right]
            \qquad \textrm{and}\qquad R^2_i=\sum_{j=1}^n s_{ij} H_j^1\,.
    $$
Notice that the $1\times1$ upper left corner of $R_i^2$ coincides with $R_i^1$. Let $Z_i^2$ be such that $\vec{Z}_i^2:=(Z_i^1,Z_i^2)\sim\cN_2(0,R^2_i)$, and such that for all $i\in [n]$, the $\vec{Z}_i^2$'s are independent. Set $\bs{Z}^2=(Z^2_i)$. \\
    \item \emph{Step t.} Let the covariance matrix $R_i^t\in \RR^{t\times t}$ and the $\RR^n$ vectors $\bs{Z}^1,\cdots, \bs{Z}^t$ be given, where 
    $$
    \vec{Z}_i^t:=(Z_i^1,\cdots, Z_i^t) \sim {\mathcal N}_t(0,R_i^t)\ ,
    $$
    and where all the $\vec{Z}_i^t$'s are independent for $i\in [n]$. Let  
    $$
        H_i^t =  \EE \begin{bmatrix}
            h(x^{0}_i, \eta_i, 0) \\
            h(Z^{1}_i, \eta_i, 1) \\
            \vdots                \\
            h(Z^{t}_i, \eta_i, t)\end{bmatrix}
        \begin{bmatrix}
            h(x^{0}_i, \eta_i, 0) &
            h(Z^{1}_i, \eta_i, 1) &
            \cdots                & h(Z^{t}_i, \eta_i, t)\end{bmatrix}$$
            and $R_i^{t+1} =  \sum_{j=1}^n s_{ij}H_j^t$. Notice that the $t\times t$ upper left corner of matrix $R_i^{t+1}$ coincides with $R_i^t$.
            Let $Z_i^{t+1}$ be such that 
    $$
    \vec{Z}_i^{t+1}:=(Z_i^1,Z_i^2,\cdots, Z_i^{t+1}) \sim\cN_{t+1}(0,R^{t+1}_i)
    $$
    and such that for all $i\in [n]$, the $\vec{Z}_i^{t+1}$'s are independent. Set $\bs{Z}^{t+1}=(Z^{t+1}_i)$.
        \end{enumerate}
    Consider the sequence of $n$-dimensional Gaussian random vectors $\left(\bs{Z}^{t}\right)_{t\in \mathbb{N}}$. We denote 
    $$
    \left(\bs{Z}^1, \cdots, \bs{Z}^t\right) \sim \DE\left(S,h, \bs{x}^0, \bs{\eta}, t\right)\, .
    $$
    We also define $Z_i=(Z_i^t)_{t\ge 1}$. The sequences $\{Z_i\}_{i\in [n]}$ are centered, Gaussian, and independent. The notations $\bs{Z}^t$ and $\vec{Z}_i^t$ are described in Fig. \ref{fig:notations-Z}.
\end{definition}

\begin{figure}
    \begin{tikzpicture}

\node[anchor=east] at (-2.5, 0) {$
\left(\bs{Z}^1, \cdots, \bs{Z}^t\right) =$};

\matrix[matrix of math nodes,left delimiter={(},right delimiter={)}, column sep=.5cm] (m) {
    Z_1^1 & Z_1^2 & \cdots & Z_1^t \\
    Z_2^1 & Z_2^2 & \cdots & Z_2^t \\
    \vdots & \vdots &  & \vdots \\
    Z_i^1 & Z_i^2 & \cdots & Z_i^t \\
    \vdots & \vdots &  & \vdots \\
    Z_n^1 & Z_n^2 & \cdots & Z_n^t \\
};

\draw[thick, red, rounded corners] 
  ($(m-4-1.south west)+(-0.2,-0.1)$) 
  rectangle 
  ($(m-4-4.north east)+(0.2,0.1)$);

\draw[thick, blue, rounded corners] 
  ($(m-1-4.north west)+(-0.1,0.2)$) 
  rectangle 
  ($(m-6-4.south east)+(0.1,-0.2)$);

\node[anchor=west] at ($(m-4-3.east)+(1.5,0)$) {$\rightarrow\vec Z^{t}_i\in \RR^t$};
\node[anchor=north] at ($(m-4-4.south)+(0,-1.5)$) {$\substack{\downarrow\\ \bs{Z}^t\in \RR^n}$};
\end{tikzpicture}

\caption{The Gaussian matrix $(\bs{Z}^1,\cdots, \bs{Z}^t)$, the notations $\bs{Z}^t$ and $\vec{Z}^t_i$. Rows $Z_i=(Z_i^t,\, t\ge 1)$ are independent. The correlations within each row are described by the DE equations: $\vec{Z}_i^t\sim \cN_t(0,R_i^t)$, see Definition \ref{def:de}. }
\label{fig:notations-Z}
\end{figure}

\subsection{Main result (informal)}
As already mentioned, numerous studies \cite{Bayati_2015, pak-jus-krz-23, HACHEM2024104276, gueddari2024elliptic} have extended the AMP algorithm to cover more complex random matrix models $W$. For each new matrix model, two key questions must be addressed: 
\begin{enumerate}[label=\alph*)]
    \item How to define a proper Onsager term?
    \item What are the associated DE equations ?
\end{enumerate}
In this paper, we answer both questions for the matrix model described in Section~\ref{subsec:MatrixModel}. We show that the DE equations are given by Definition \ref{def:de}; in particular they only depend on the variance profile and not on the correlation profile. 
Let $W$ be given by \eqref{def:W}, $h:\RR^2\times \NN\to \RR$ an activation function, $\bs{x}^0,\bs{\eta}\in \RR^n$ deterministic vectors and 
$$
V = \left(S\odot S^\top\right)^{\odot 1/2} \odot T\,,$$
where $S$ and $T$ are respectively the variance and correlation profiles of the random matrix $W$, and $(\bs{Z}_1,\cdots, \bs{Z}_t)$ be given by the DE equations. We identify a possible Onsager term as
$$
\textrm{ONS}_t=  \diag\left(V \EE \frac{\partial h}{\partial x} (\bs{Z}^t,\bs{\eta},t)\right)h(\bs{x}^{t-1},\eta, t-1)\, ,
$$
and consider the AMP 
$$
\bs{x}^{t+1} = W h(\bs{x}^t, \bs{\eta}, t)  - \text{ONS}_t\,.
$$
We shall prove that for any appropriate test function $\varphi:\RR^{t+1}\to \RR$ and uniformly bounded sequence $(\beta^{(n)}_i)_{i\in [n]}$ of real numbers, the following convergence holds true
$$
\frac 1n \sum_{i=1}^n \left\{ \beta^{(n)}_i \varphi(\eta_i,x_i^1,\cdots, x_i^t) - \beta^{(n)}_i \varphi(\eta_i,\vec{Z}_i^t)\right\}
\xrightarrow[n\to\infty]{\mathbb{P}} 0\, ,
$$
where the $\vec{Z}_i^t$'s are defined in Definition \ref{def:de}. The formal assumptions and statement are provided in Section \ref{sec:main-assumption+result}.

\begin{remark}
    As a consequence of the variance profile structure, each $t$-uple $(x^1_i,\cdots, x^t_i)$ needs to be compared to $\vec{Z}_i^t$ in the convergence above, a situation substantially more complex than in \eqref{eq:conv-AMP-GOE}. 
\end{remark}

\subsection{Motivation from theoretical ecology} The analysis of large
ecological networks (foodwebs) and complex systems has garnered significant
attention in recent years, with numerous studies leveraging tools from random
matrix theory 

\cite{allesina2015stability,Bunin,cure2023antagonistic}. In this perspective,
large Lotka-Volterra (LV) models \cite{akjouj2024complex} describe the dynamics
of the vector of the species abundances $\bx(s) = (x_i(s))_{i\in [n]}$ for
$s\in[0,\infty)$ in a series of coupled differential equations where the
interactions are encoded by a random matrix $A$ whose entries $A_{ij}$'s
represent the effect of species $j$ on species $i$.  The more complex the
matrix model $A$, the better the modeling of the network. 

In a series of articles
\cite{akjouj2024equilibria,HACHEM2024104276,gueddari2024elliptic}, AMP
algorithms were designed in this context to analyze the statistical properties
of the globally stable equilibrium $\bx^\star$ (when it exists) of the vector
$\bx(s)$, depending on the random matrix model (symmetric models in
\cite{akjouj2024equilibria,HACHEM2024104276}, elliptic model in
\cite{gueddari2024elliptic}). More specifically, let $\bs{z}\in \mathbb{R}^n$
be the solution of the fixed-point equation:  
$$
    \bs{z} = \left(A-I_n\right) \bs{z}^+ + \bs{1}_n\, ,\qquad \bs{z}^+=\bs{z} \vee 0\,,
$$
which can be shown to be unique under a condition on $A$ 
(see \cite{akjouj2024equilibria} for details), then the equilibrium $\bs{x}^\star$ is given by $\bs{x}^\star=\bs{z}^+$. Extracting statistical information from $\bs{x}^\star$ is a non-trivial task as the dependence of $\bs{x}^\star$ to $A$ is highly non-linear. 
However this task can be performed by designing a specific AMP algorithm. 

In a foodweb, the effect $j\to i$ of species $j$ on species $i$ is a priori different from the effect $i\to j$. Moreover, recent empirical evidence \cite{busiello2017explorability} has shown that in a foodweb of size $n$ a given species only interacts with a small number $K_n\ll n$ of other species. One may want to go one step further in modelling foodwebs, and for instance consider block structures with subpopulations with homogeneous statistical features \cite{clenet2024impact}. 

All these desirable features naturally motivate the study of non-Symmetric and possibly sparse random matrices, with variance and correlation profiles. Such a model is at the heart of the AMP developed in this article. 

In a forthcoming work, we intend to design improved matrix models for foodwebs and to analyze via AMP techniques the equilibria of associated large LV models.

\subsection{Outline of the article}
In Section \ref{sec:main-assumption+result} we formally state the assumptions and the main result of the article, namely Theorem \ref{thm:main}, together with examples, an extension to non-centered random matrices, and open questions. 
The remaining sections are devoted to the proof of the main result (see also Section \ref{subsec:outline} for a precise roadmap of the proof). In Section \ref{sec:polyActiv}, we state a matrix AMP for polynomial activation functions, see Theorem \ref{thm:amp-vec}. 
Section \ref{sec:combinatorics} is the heart of the proof of Theorem \ref{thm:amp-vec}. It is based on combinatorial techniques which build upon \cite{Bayati_2015} and \cite{HACHEM2024104276}. In Section \ref{sec:generalActiv} we generalize the previous AMP for more general functions, and relax the assumption that matrix $W$ should have null diagonal (an assumption made to handle the combinatorics in the proof of Theorem \ref{thm:amp-vec}). 

\subsection{Notations} Denote by $|{\mathcal S}|$ the cardinality of a set
${\mathcal S}$.  We often (but not systematically) use bold letters for vectors
$\bs{a}=(a_i)_{i\in [n]},\bs{b}=(b_j)_{j\in [k]}$, etc. If $\bs{a}=(a_\ell)\in
\RR^q$ and $\bs{m}=(m_\ell)\in \NN^q$ is a multi-index, we denote by
$\bs{a}^{\bs{m}}=\prod_{\ell\in [q]} a_{\ell}^{m_{\ell}}$. 

Denote by $\bs{1}_n$ (or $\bs{1}$ if the context is obvious) the $n\times 1$
vector of ones and by $\bs{1}_{n\times p}$ the matrix $\bs{1}_{n\times p}=
\bs{1}_n \bs{1}_p^\top$ where matrix $A^\top$ stands for the transpose of $A$.
For $\bs{a}\in \RR^n$, $\diag(\bs{a})$ stands for the $n\times n$ diagonal
matrix with diagonal elements the $a_i$'s.  If $\bs{a}\in \RR^n$ is a vector,
$\|\bs{a}\|$ stands for its Euclidian norm and $\|\bs{a}\|_n :=
\|\bs{a}\|/\sqrt{n}$ for its normalized Euclidian norm. If $A$ is a matrix, $\|A\|$
stands for its spectral norm. 
 
If $f:\RR\to \RR$ and $\bs{a}=(a_i)_{i\in [n]}$ a vector, denote by $f(\bs{a})=\left( f(a_i)\right)_{i\in [n]}$ with obvious generalizations $f(\bs{a}, \bs{b})=(f(a_i,b_i))$ for $\bs{a},\bs{b}\in \RR^n$. Let $f(x,y,t)$ a real function with $(x,y,t)\in \RR^2\times \NN$, denote by $\partial f = \frac{\partial f}{\partial x}$. Let $\bs{a}\in \RR^n$ and $I\subset [n]$, then
$
\langle \bs{a} \rangle_n =\frac 1n \sum_{i\in [n]} a_i
$ and 
$
\langle \bs{a} \rangle_I =\frac 1{|I|} \sum_{i\in I} a_i
$. The empirical measures $\mu^{\bs{a}}$ and $\mu^{\bs{a}^1,\cdots, \bs{a}^t}$ of vector $\bs{a}=(a_i)_{i\in [n]}$ 
and vectors $\bs{a}^1,\cdots, \bs{a}^t$ in $\RR^n$ stand for
$$
\mu^{\bs{a}}=\frac 1n \sum_{i\in [n]} \delta_{a_i}\qquad \textrm{and}\qquad 
\mu^{\bs{a}^1,\cdots, \bs{a}^t}=\frac 1n \sum_{i\in [n]} \delta_{(a_i^1,\cdots, a_i^t)} \,,
$$
where $\delta_x$ is the Dirac measure on $\RR$ and $\delta_{(x^1,\cdots, x^t)}$, the Dirac measure on $\RR^t$. Convergence in probability is denoted by $\xrightarrow[]{\mathbb{P}}$. 

\section{AMP for general non-Symmetric random matrices}
\label{sec:main-assumption+result}

Assumptions are introduced in Section \ref{subsec:ass}. The main result, Theorem \ref{thm:main}, is stated in Section \ref{subsec:main}. In Section \ref{subsec:examples}, we provide two examples, one focusing on the correlation profile, the second on a sparse variance profile. 
In Section \ref{sec:extension}, we extend the AMP result to a non centered random matrix model.
Finally, we provide in Section \ref{subsec:outline} a detailed outline of the proof of the main theorem.

\subsection{The general framework of the AMP recursions} 

Let $X$ be a $n\times n$ $T$-correlated matrix and $S$ a $n\times n$ matrix with non negative coefficients. Recall the definition of $W=S^{\odot 1/2}\odot X$ in Eq. \eqref{def:W} and define matrix $V$ as follows
\begin{equation}
    \label{def:V}
         V = \begin{pmatrix} V_{ij} \end{pmatrix}_{i,j=1}^n
        = \bigl(S \odot S^\top\bigr)^{\odot {1/2}} \odot T\,.
\end{equation}
Notice that  
    $ \EE\left[W\odot W^\top\right] = V$.
    
Let $h : \RR^2\times \NN \to \RR$ be a measurable function such that for all
$(\eta,t)\in \RR\times \NN$, the derivative $\partial h(\cdot, \eta, t)$ exists
almost everywhere\footnote{Notice that if $h$ is Lipschitz with respect to the
first variable, then it is differentiable almost everywhere by Rademacher's
theorem.}. We denote as $\partial h$ any measurable function that coincides
with this derivative almost everywhere. For $\bs{x}, \bs{\eta}\in \RR^n$ and
$t\in \NN$, denote $h(\bs{x}, \bs{\eta}, t) = \left(h(x_i, \eta_i,
t)\right)_{i\in [n]}$.

\begin{definition}
    \label{def:AMP-recursive-scheme}
Let $X$ be a $n\times n$ $T$-correlated matrix following Definition
\ref{def:corr_matrix}, $W$, $V$ given by \eqref{def:W}, \eqref{def:V}, and
$\bs{x}^0,\bs{\eta}\in \RR^n$. Let $h : \RR^2\times \NN \to \RR$ a measurable
function such that $\partial h$ exists. 
    Let $\bs{Z}^1,\cdots, \bs{Z}^t$ be $\RR^n$-valued Gaussian vectors defined in Def. \ref{def:de}.
    Define the $\RR^n$-valued random sequence $(\bs{x}^t)_{t\geq 1}$ recursively as follows,
    \begin{equation}
        \label{def:AMPZ}
        \begin{cases}
        \bs{x}^{1} &= W h(\bs{x}^0, \bs{\eta} , 0)\,,\\
        \bs{x}^{t+1} &= W h(\bs{x}^t, \bs{\eta} , t) - \diag\left(V \EE \partial h(\bs{Z}^{t}, \bs{\eta}, t)\right) h(\bs{x}^{t-1}, \bs{\eta}, t-1)\quad\textrm{for}\quad  t\ge 1\,.
        \end{cases}
    \end{equation}
The following notation will be used in the sequel:
    \begin{equation}
        \label{eq:AMP_notation}
        \left(\bs{x}^t\right)_{t\ge 1}  = \AMPZ\left(X, S, h, \bs{x}^0, \bs{\eta}\right)\,,\quad \bs{x}^0, \bs{\eta}\in \RR^n\,.
    \end{equation}

\end{definition}
\begin{remark}
    The parameter $\bs{\eta}\in\RR^n$ which is fixed once for all in the recursions can be seen as an extra degree of freedom in the design of the algorithm.
\end{remark}

\begin{remark}[versatility]
\label{rmk:elliptic}
    Definition \ref{def:AMP-recursive-scheme} generalizes many frameworks found in the literature.  
    \begin{enumerate}[label=\alph*)]
    \item For a symmetric matrix $X$ where $T = \bs{1}_{n\times n}$ and $S=\frac{\bs{1}_{n\times n}}n$, one gets the AMP in  \cite{Bayati_2015}.
    \item By taking a sparse symmetric matrix $S$, one recovers the AMP in \cite{HACHEM2024104276}. 
    \item The elliptic AMP studied in \cite{gueddari2024elliptic} is obtained by taking $S=\frac{\bs{1}_{n\times n}}n$ and $T = \rho \bs{1}_{n\times n}$ for $\rho \in [-1, 1]$. In the latter, the AMP recursion writes
    \begin{equation*}
        \label{eq:AMP_elliptic}
        \bs{x}^{t+1} = W\ h\left(\bs{x}^t, \bs{\eta}, t\right) - \rho
        \left\langle \partial h\left(\bs{x}^t, \bs{\eta},t\right)\right\rangle_n
        h\left(\bs{x}^{t-1},\bs{\eta},  t-1\right)\,.
    \end{equation*}
    One can notice that the Onsager term is slightly different. We will come back to this later in Section~\ref{subsec:onsagers}.
    \end{enumerate}
\end{remark}

\subsection{Assumptions}
\label{subsec:ass}
We present hereafter the assumptions that will be used in the sequel, some of which already appeared in \cite{HACHEM2024104276}.

\begin{assumption}[moments]
    \label{ass:X}
    Let $T= (\tau_{ij})_{1\leq i,j\leq n}$ be a symmetric matrix with $\tau_{ij}\in [-1, 1]$ and $X$ a random $T$-correlated matrix following Definition \ref{def:corr_matrix}. For every $k\ge 1$ there exists a positive real number $\Cmom(k)>0$ such that 
    for every $n\ge 1$ and all $i,j\in [n]$ 
        $$
        \left( \EE \left| X_{ij} \right|^k \right)^{1/k} \leq \Cmom(k)\, . 
        $$
\end{assumption}

\begin{assumption}[variance profile] \label{ass:S}
Let $(K_n)$ a sequence of positive integers diverging to $+\infty$ and satisfying $K_n \leq n$. 
The deterministic $n\times n$ matrix $S=(s_{ij})_{1\le i,j\le n}$ has non-negative elements and satisfies the following: there exist positive constants $C_{\text{card}}, C_S, c_S>0$ such that for every $n\ge 1$ and all $i,j\in [n]$, 
$$
 \left| \left\{ j \in [n] \, : \, s_{ij} > 0 \right\} \right|
                  \ \leq\ C_{\text{card}}\, K_n \ ,\qquad s_{ij} \ \leq\  \frac{C_S}{K_n}\qquad\textrm{and}\qquad  \sum_{\ell = 1}^n s_{i\ell} \ \geq\ c_S\, .
$$
\end{assumption}

The following technical assumption ensures that the spectral norm of the matrix $W$ is almost surely bounded by a constant as $n$ goes to infinity.

\begin{assumption}[lower bound on the sparsity level]\label{ass:nu}
Let A-\ref{ass:X} and A-\ref{ass:S} hold for the random matrix $X$ and the variance profile $S$, and consider associated $\Cmom$ and $(K_n)$. There exist positive real numbers $\nu, C> 0$ such that for every $k,n\geq 1$
$$
\Cmom(k) \leq C\, k^{\nu/2}\, \qquad \text{and} \qquad K_n \ge  C\,\log^{(\nu \vee 1)}(n)\, .
$$
\end{assumption}

\begin{remark}[on Assumption A-\ref{ass:nu}]
\begin{enumerate}[label=(\alph*)]
    \item The moment condition $\Cmom(k) \leq C\, k^{\nu/2}$ is standard. For example, it is fulfilled with $\nu=1$ for subGaussian entries.
    \item Assumptions A-\ref{ass:S} and A-\ref{ass:nu} describe the sparsity level one can expect for matrix $W$. The sequence $K_n$ is an upper bound of the number of non-vanishing elements of $W$ per row. It must be at least logarithmic in $n$ (up to the power $\nu\vee 1$) but can be much smaller than $n$. 
    \item As will appear later in Proposition \ref{prop:spectralNormBound}, the logarithmic lower bound on $K_n$ and the upper bound for the moments of $X$'s entries are technical conditions needed for bounding the spectral norm of the random matrix $W$.
\end{enumerate}
\end{remark}

We also consider initial conditions for the initial vector $\bs{x}^0$ and for the parameter vector $\bs{\eta}\in \RR^n$.

\begin{assumption}[initial and parameter vectors]
    \label{ass:initial_point}
    Let $\bs{x}^0=(x^0_i)\in \RR^n$, $\bs{\eta}=(\eta_i)\in \RR^n$ be deterministic vectors and consider the sequences $(\bs{x}^0)_n$ and $(\bs{\eta})_n$. There exist two compact sets $\cQ_{x}\subset \RR$ and $\cQ_{\eta}\subset \RR$ such that
    \begin{equation*}
  \{ x_i^0\,,\ i\in [n]\,,\ n\ge 1\}\ \subset\ \cQ_{x}\qquad \textrm{and}\qquad \{ \eta_i\,,\ i\in [n]\,,\ n\ge 1\}\ \subset\ \cQ_{\eta}\,.
    \end{equation*}
    
\end{assumption}

\begin{assumption}[Regularity of the activation functions]
    \label{ass:activation_function}
    Let $h:\RR^2 \times \NN \to \RR$ be a measurable function. For every $t \in \NN$, there exists a positive number $L$ such that for every $x,y,\eta\in \RR$,
    \begin{equation*}
        \left|h(x, \eta, t) - h(y, \eta, t)\right|\quad  \leq\quad  L \left|x-y\right|\,.
    \end{equation*}
    For every $t\in \NN$, there exists a continuous non-decreasing function $\kappa : \RR_+ \to \RR_+$ with $\kappa(0)=0$ and a compact set $\cQ_{\eta}\subset \RR$ such that for every $x\in \RR$ and $\eta, \eta'\in \cQ_{\eta}$, 
    \begin{equation*}
        \left|h(x, \eta, t) - h(x, \eta^\prime, t)\right|\leq \kappa\left(\left|\eta - \eta^\prime\right|\right)\left(1+|x|\right).
    \end{equation*}
\end{assumption}

\begin{assumption}[non degeneracy condition over $h$]
\label{ass:nonDegen}
         Let $h:\RR^2 \times \NN \to \RR$ be a measurable function. There exist
        two compact sets $\cQ_{x}\subset \RR$ and $\cQ_{\eta}\subset \RR$ with the following properties:
        \begin{enumerate}
            \item 
        There exists a constant $c>0$ such that 
\begin{equation*}
    \inf_{x\in \cQ_{x}, \eta\in \cQ_{\eta}} h^2(x, \eta, 0) \quad \geq\quad  c\,.
\end{equation*}
    \item For every $t\ge 1$, there exist two positive real numbers $c_h(t), D_h(t)>0$ such that 
    $$ \inf_{\eta \in \cQ_{\eta}} \int_{-D_h(t)}^{D_h(t)} h^2(x, \eta, t) dx \quad \geq\quad  c_h(t)\,. $$
    \end{enumerate}
\end{assumption}

There are tight links between the assumptions. In particular, the parameter $\nu$ of A-\ref{ass:nu} controls the moments bounds $(C_{mom}(k))$ given by A-\ref{ass:X} and the sparsity level $K_n$ given by A-\ref{ass:S}, the compact sets $\cQ_{\eta}$ and $\cQ_{x}$ of A-\ref{ass:activation_function} and A-\ref{ass:nonDegen} will be given by A-\ref{ass:initial_point}.

\subsection{Main result}
\label{subsec:main}
Recall the definition of a \textit{pseudo-Lipschitz} function. A function $f:\RR^d\to\RR$ is said to be pseudo-Lipschitz (PL) if there exists a constant $L$ such that for all $\bx, \by\in \RR^d$ the following inequality is satisfied:
\begin{equation*}
    \left|f(\bx) - f(\by)\right| \ \leq\  L \left\lVert\bx - \by\right\rVert \left(1+ \left\lVert\bx\right\rVert+\left\lVert\by\right\rVert\right)\, .
\end{equation*}
We are now in position to state our main result. 
\begin{theorem}
    \label{thm:main}
    Let Assumptions A-\ref{ass:X} to A-\ref{ass:nonDegen} hold true, with associated $\nu$, $\cQ_{\eta}$ and $\cQ_{x}$. Consider the AMP 
    $$
    (\bs{x}^t)_{t\ge 1} = \AMPZ\left(X, S, h, \bs{x}^0, \bs{\eta}\right)
    $$ 
    as defined in Definition~\ref{def:AMP-recursive-scheme}, and the sequence of $n$-dimensional Gaussian random vectors $\left(\bs{Z}^t\right)_{t\in \NN}$  defined by the DE equations in Definition~\ref{def:de}:
    $$
    (\bs{Z}^1, \cdots, \bs{Z}^t) \sim \DE(S,h, \bs{x}^0, \bs{\eta}, t)\,.
    $$ 
    Let $t\ge 1$ and $\bs{\beta} = (\beta^{(n)}_i)\in \RR^n$ uniformly bounded, i.e. $\sup_n \max_{i\in [n]} |\beta^{(n)}_i|< \infty$. For any pseudo-Lipschitz test function $\varphi :\RR^{t+1} \to \RR$, it holds that
    \begin{equation*}
        \frac{1}{n} \sum_{i\in [n]} \beta^{(n)}_i  \left\{ 
        \varphi\left(\eta_i, x_i^1, \cdots, x_i^t\right) - \EE\left[\varphi\left(\eta_i, Z_i^1, \cdots, Z_i^t\right)\right] \right\}
        \quad \toprobalong\quad  0\, .
    \end{equation*}
\end{theorem}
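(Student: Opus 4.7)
The plan follows the now-standard two-stage strategy for AMP convergence theorems. First, I would establish the statement for activation functions $h(\cdot,\eta,t)$ that are polynomials in $x$ of fixed degree (or, more generally, for a matrix-valued polynomial AMP as announced in Theorem \ref{thm:amp-vec}), and then extend to the full class of Lipschitz $h$ and pseudo-Lipschitz test functions by an approximation argument. A preliminary reduction is to assume that $W$ has vanishing diagonal: under A-\ref{ass:X}--A-\ref{ass:nu}, the diagonal part of $W$ has entries bounded in $L^k$ by a multiple of $\sqrt{C_S/K_n}$, hence contributes a vector of $\ell^\infty$-norm $o(1)$ after Lipschitz activation, and this perturbation can be propagated through finitely many iterations without affecting the limit.

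For the polynomial step, I would run a joint moment computation. Fixing a multi-index $\bs{m}=(m_1,\dots,m_t)\in\NN^t$ and a bounded weight vector $\bs{\beta}$, the goal is
\begin{equation*}
\frac{1}{n}\sum_{i\in[n]}\beta_i^{(n)}\Bigl\{(x_i^1)^{m_1}\cdots(x_i^t)^{m_t}-\EE\bigl[(Z_i^1)^{m_1}\cdots(Z_i^t)^{m_t}\bigr]\Bigr\}\toprobalong 0,
\end{equation*}
which is enough, by a density argument, to recover any pseudo-Lipschitz $\varphi$. Convergence in probability follows from bounds on the mean and the variance of the left-hand side. Expanding each $x_i^s$ via \eqref{def:AMPZ}, and then each $h(\bs{x}^{s-1},\bs{\eta},s-1)$ as a polynomial in the prior iterates, the quantity to control becomes a sum over tuples of indices of products of entries of $W$, entries of $S$ and $V$ coming from the Onsager term, and polynomial moments of the deterministic inputs $\bs{x}^0,\bs{\eta}$. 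Each such term corresponds to a closed labelled walk; its expectation is evaluated by partitioning the edges into matched pairs and comparing with the Gaussian Wick expansion on the DE side.

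The main obstacle is the combinatorial bookkeeping in the non-symmetric case with \emph{both} a (possibly sparse) variance profile $S$ and a correlation profile $T$. Unlike the symmetric GOE setting, each directed edge $(i,j)$ of a walk can be matched either with a second visit to $(i,j)$, contributing a factor $s_{ij}$, or with a visit to the opposite edge $(j,i)$, contributing $V_{ij}=\sqrt{s_{ij}s_{ji}}\,\tau_{ij}$; higher-order multi-matchings (from non-Gaussian entries) are controlled by $\Cmom$ and are of lower order in $1/K_n$. I plan to combine the tree-counting bookkeeping of \cite{Bayati_2015} with the sparsity-aware bounds of \cite{HACHEM2024104276} and the non-symmetric pairing rules introduced in \cite{gueddari2024elliptic}. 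The crucial technical point is to check that the precise form of the Onsager term, $\diag(V\,\EE\,\partial h(\bs{Z}^{t},\bs{\eta},t))\,h(\bs{x}^{t-1},\bs{\eta},t-1)$, is exactly what cancels the leading contributions of the non-tree walks that exploit the $\tau_{ij}$-correlation between $W_{ij}$ and $W_{ji}$; this is why $V$, and not $S$, appears in the correction. The surviving contributions are then identified inductively with the covariance matrices $R_i^t$ of Definition \ref{def:de}, while the second-moment bound, obtained by duplicating the walk on an independent copy, gives the $o(1)$ variance needed for convergence in probability.

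Finally I would pass from polynomial $h$ and polynomial test functions $\varphi$ to the general case. The polynomial AMP already yields, as a by-product, a uniform $L^2$ bound $\sup_n \frac{1}{n}\sum_i (x_i^t)^2\le C(t)<\infty$ and analogous bounds along the DE side. Combined with the spectral norm control from Proposition \ref{prop:spectralNormBound}, the Lipschitz property of $h$ in A-\ref{ass:activation_function}, and the fact that $\bs{x}^0,\bs{\eta}$ take values in the compacts $\cQ_{x},\cQ_{\eta}$, any Lipschitz activation $h$ can be approximated uniformly on the relevant compacts by polynomials; a triangle-inequality argument then propagates the approximation through the finitely many iterations $1,\dots,t$, the pseudo-Lipschitz test function $\varphi$ being handled by the same $L^2$ continuity mechanism. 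This yields Theorem \ref{thm:main} in its stated generality.
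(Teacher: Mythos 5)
Your high-level plan (zero-diagonal reduction, polynomial activations first via tree combinatorics, then extension by approximation) matches the paper's roadmap, and your remarks about the pairing rules $s_{ij}$, $s_{ji}$, $\sqrt{s_{ij}s_{ji}}\tau_{ij}$ and the role of $V$ in the Onsager term are on target. However, two steps as you state them contain genuine gaps.

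First, the zero-diagonal reduction. You argue that the diagonal of $W$ ``contributes a vector of $\ell^\infty$-norm $o(1)$ after Lipschitz activation'' because each entry is bounded in $L^k$ by $O(\sqrt{C_S/K_n})$. This is false: a small $L^k$ bound on each of $n$ entries does not control the maximum over $i\in[n]$, and in the sparse regime $K_n\sim\log^{\nu\vee 1}(n)$ the quantity $K_n^{-1/2}\max_i |X_{ii}|$ is not $o(1)$ in general. What actually works, and what the paper does in Section~\ref{subsec:nonzeroDiag}, is to control the \emph{normalized Euclidean} norm: $\lVert \bs{x}^1-\tilde{\bs{x}}^1\rVert_n^2 = \frac{1}{n}\sum_i s_{ii}X_{ii}^2 h(x_i^0)^2\le \frac{C}{K_n}\cdot\frac{1}{n}\sum_i X_{ii}^2 = o_{\mathbb{P}}(1)$, and then propagate this $\lVert\cdot\rVert_n$-estimate through the iterations using the spectral-norm bound and Lipschitzness. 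This is strictly weaker than your $\ell^\infty$ claim but it is what is both true and needed (pseudo-Lipschitz test functions are continuous in the $\lVert\cdot\rVert_n$-Wasserstein sense). You also need Lemma~\ref{lem:RRtilde} to compare the two sets of DE covariances, which you do not mention.

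Second, and more seriously, the polynomial approximation step. You propose to approximate the Lipschitz $h$ ``uniformly on the relevant compacts.'' This does not work: the iterates $x_i^t$ are asymptotically Gaussian and hence have unbounded support, so a polynomial that matches $h$ on a compact can diverge from it on the tails, and the pseudo-Lipschitz test function $\varphi$ is then sensitive to this error. The correct notion is $L^2$-approximation with respect to the Gaussian laws $\cN(0,\sigma^2)$, \emph{uniformly over $\sigma$ in a fixed compact interval $[\sigma_{\min},\sigma_{\max}]$} with $\sigma_{\min}>0$; this is the content of Lemma~\ref{lem:polyApprox} and Lemma~\ref{lem:polyApprox_e}. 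Making that uniformity possible is precisely why the paper needs (i) the upper bound $\sup_n\max_i\lVert R_i^t\rVert<\infty$ \emph{and} (ii) the non-degeneracy lower bound $R_i^t(t,t)\ge c>0$, which is Lemma~\ref{lem:boundVariance} and which is where Assumption A-\ref{ass:nonDegen} enters. Your proposal never invokes A-\ref{ass:nonDegen} and contains no mechanism that plays its role, so as written the approximation step would break for activations whose DE variances could collapse to zero. Without this lower bound one cannot control $\EE\,\partial h(\sigma\xi)-\EE\,\partial p_e(\sigma\xi)$ either (the Stein estimate in Lemma~\ref{lem:polyApprox} carries a factor $1/\sigma$), and that quantity appears inside the Onsager comparison $\Delta^{(1)}$ of Lemma~\ref{lem:ampComparison}. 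You also omit the substitution lemma (Lemma~\ref{lem:Onsagerswitch}) needed to pass between the $W\odot W^\top$ and $V\,\EE\,\partial h(\bs{Z}^t)$ versions of the Onsager correction, and the $o_{\mathbb{P}}(1)$ variance estimate analogous to \eqref{varconv}; stating ``duplicate the walk on an independent copy'' is the right idea but is not yet an argument.

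On the combinatorial core, your sketch is a direct moment expansion of $x_i^t$, whereas the paper routes through the intermediary non-backtracking iterates $z_i^t$ of Lemma~\ref{ztree} and the chain $x\approx z\approx\tilde z\approx y\approx U$ (Propositions~\ref{prop:xtzt}, \ref{prop:ztzttilde}, \ref{prop:ztyt}, \ref{prop:ytut}), the key identity being Lemma~\ref{lem:xtzt} which expresses $x_i^t-z_i^t$ as a sum over trees containing a backtracking path or star. Your direct expansion is morally equivalent, but without the $z^t$ intermediary the bookkeeping of which ``walks'' the Onsager term kills becomes considerably more opaque; the paper's decomposition is what makes the cancellation precise and inductable.
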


\subsection{Alternative Onsager terms}\label{subsec:onsagers} It might be convenient to consider alternative Onsager terms in the AMP recursion and replace the diagonal matrix $\diag(V \EE \partial h(\bs{Z}^t, \bs{\eta}, t))$ by one of the two following terms 
    \begin{equation}
        \label{eq:variants}
        \diag\left(V \partial h(\bs{x}^t, \bs{\eta}, t)\right)
        \quad \text{or} \quad \diag\left(W\odot W^\top \partial h(\bs{x}^t, \bs{\eta}, t)\right)\,.
    \end{equation}

Depending on the context, it might be convenient to consider one of these three Onsager terms. 
    
For example, the Onsager term built upon $\diag\left(W\odot W^\top \partial h(\bs{x}^t, \bs{\eta}, t)\right)$ is better suited for the combinatorial arguments developed in Section~\ref{sec:combinatorics} as it directly involves the entries of matrix $W$, and the loss with respect to the original recursion should be asymptotically negligible since $\EE (W\odot W^\top)=V$. The Onsager term built upon $\diag\left(V \partial h(\bs{x}^t, \bs{\eta}, t)\right)$ naturally appears in \cite{akjouj2024equilibria,gueddari2024elliptic}.

    In this perspective we introduce new notations. Denote by 
 \begin{equation}
        \label{def:AMPW}
        \left(\bs{x}^t\right)_{t\ge 1}  := \AMPW\left(X, S, h, \bs{x}^0, \bs{\eta}\right),
    \end{equation}
    the recursive procedure defined by 
    \begin{equation*}
    \begin{cases}
    \bs{x}^1&=W h(\bs{x}^0, \bs{\eta} , 0)\,,\\
    \bs{x}^{t+1} &= W h(\bs{x}^t, \bs{\eta} , t) - \diag\left(W\odot W^\top \partial h(\bs{x}^{t}, \bs{\eta}, t)\right) h(\bs{x}^{t-1}, \bs{\eta}, t-1)\quad\textrm{for}\quad  t\ge 1\,.
    \end{cases}
    \end{equation*}
Similarly, denote by
   \begin{equation}
   \label{def:AMP}
        \left(\bs{x}^t\right)_{t\ge 1}  := \AMP\left(X, S, h, \bs{x}^0, \bs{\eta}\right) \,,
    \end{equation}
    the recursive procedure defined by 
    \begin{equation*}
    \begin{cases}
    \bs{x}^1&=W h(\bs{x}^0, \bs{\eta} , 0)\,,\\
    \bs{x}^{t+1} &= W h(\bs{x}^t, \bs{\eta} , t) - \diag\left(V \partial h(\bs{x}^t, \bs{\eta}, t)\right) h(\bs{x}^{t-1}, \bs{\eta}, t-1)\quad\textrm{for}\quad  t\ge 1\,.\\
    \end{cases}
    \end{equation*}

We believe that none of these three Onsager terms should change the general
asymptotics of the AMP. However, a complete proof of this fact is not yet
established.

\subsection{Examples of AMP}
\label{subsec:examples}
We provide hereafter two examples of matrix models where we work out the specific AMP recursion and DE equations. 
Both matrix models are of practical interest, with applications in fields 
such as theoretical ecology, where random matrices represent species interaction matrices in large 
ecological systems (see \cite{akjouj2024complex}).

\subsubsection*{Blockwise correlated random matrix}

This example generalizes the elliptic matrix model characterized 
by a single correlation coefficient $\rho$. Here, the matrix is allowed to have 
different correlation coefficients for each block. Let $n=n_1+n_2$, $X$ a $n\times n$ matrix partitioned into four submatrices: $X^{(11)}$, $X^{(12)}$, $X^{(21)}$, and $X^{(22)}$, 
of respective sizes $n_1 \times n_1$, $n_1 \times n_2$, $n_2 \times n_1$, and $n_2 \times n_2$:
$$
X = \begin{pmatrix}
    X^{(11)} & X^{(12)} \\
    X^{(21)} & X^{(22)} \\
\end{pmatrix}.
$$
Let $X^{(11)}$ and $X^{(22)}$ be (independent) elliptic random matrices with correlation coefficient $\rho_1$, 
while each entry in $X^{(12)}$ is correlated with its symmetrically corresponding entry in $X^{(21)}$ 
with a coefficient $\rho_2$. All the entries of the random matrix $X$ have variance $1$ and satisfy A-\ref{ass:X}. Consider the 
normalized version of $X$,
$$
W = \frac{X}{\sqrt{n}} \, .
$$
With our previous formalism, this model corresponds to choosing $X$ as a $T$-correlated matrix
and $W = S\odot X$ where $S$ (variance profile) and $T$ (correlation profile) are defined by
$$ 
S = \frac{\bs{1}_{n\times n} }{n}  \qquad \text{and} \qquad
T = \begin{pmatrix}
    \rho_1 \bs{1}_{n_1\times n_1} & \rho_2 \bs{1}_{n_1\times n_2} \\
    \rho_2 \bs{1}_{n_2\times n_1} & \rho_1 \bs{1}_{n_2\times n_2} \\
\end{pmatrix}.
$$
Let $r_n := \frac{n_1}n$, $I_1= \{1, \cdots, n_1\}$ and $I_2 = [n]\setminus I_1$, assume that $r_n\to r \in (0,1)$ and consider the following framework:
$
\bs{x}^0=x_0 \bs{1}_n$, the activation function $f:\mathbb{R}\to \mathbb{R}$ is Lipschitz.
Notice that $f$ satisfies A-\ref{ass:activation_function}, neither depends on $t$ nor on some extra parameter $\bs{\eta}$.  

Consider the recursion 
$(\bs{x}^t)_{t\in\NN} = \AMP\left(X, S, f, \bs{x}^0\right)$. In particular,
$$
\bs{x}^{t+1} = W f(\bs{x}^t) - \diag\left(Vf'(\bs{x}^t)\right) f(\bs{x}^{t-1}),
$$
where $V = T/n$. The Onsager term can be simplified here by writing $Vf'(\bs{x}^t)$ as
\begin{eqnarray*}
Vf'(\bs{x}^t) &=& \begin{pmatrix}
    r_n\rho_1 \langle f'(\bs{x}^t)\rangle_{I_1}\bs{1}_{n_1} + (1-r_n)\rho_2 \langle f'(\bs{x}^t)\rangle_{I_2}  \bs{1}_{n_1} \\
    r_n\rho_2 \langle f'(\bs{x}^t)\rangle_{I_1}\bs{1}_{n_2} + (1-r_n)\rho_1 \langle f'(\bs{x}^t)\rangle_{I_2}  \bs{1}_{n_2} \\
\end{pmatrix}\\
&=& \begin{pmatrix}
    r_n \rho_1 \bs{1}_{n_1} & (1-r_n)\rho_2 \bs{1}_{n_1}\\
    r_n \rho_2 \bs{1}_{n_2} & (1-r_n)\rho_1 \bs{1}_{n_2}
\end{pmatrix} \begin{pmatrix}
    \langle f'(\bs{x}^t)\rangle_{I_1} \\
    \langle f'(\bs{x}^t)\rangle_{I_2}
\end{pmatrix}.
\end{eqnarray*}
Thus 
$$
\bs{x}^{t+1} = W f(\bs{x}^t) - \left[\begin{pmatrix}
    r_n \rho_1 \bs{1}_{n_1} & (1-r_n)\rho_2 \bs{1}_{n_1}\\
    r_n \rho_2 \bs{1}_{n_2} & (1-r_n)\rho_1 \bs{1}_{n_2}
\end{pmatrix} \begin{pmatrix}
    \langle f'(\bs{x}^t)\rangle_{I_1} \\
    \langle f'(\bs{x}^t)\rangle_{I_2}
\end{pmatrix}\right]\odot f(\bs{x}^{t-1}),
$$
Notice that the Onsager term generalizes here the one obtained in the elliptic case 
(see Remark~\ref{rmk:elliptic}).

Not surprisingly (and as mentioned in \cite{gueddari2024elliptic} in the elliptic case), the DE equations do not depend on the correlation structure of $X$ and reduce to
$$ R^1 = f^2(x_0) \in \RR^{1\times 1}\,, \qquad R^{t+1} = \mathbb{E}\begin{bmatrix}
    f(x_0) \\
    f(Z_1) \\
    \cdots \\
    f(Z_t)
\end{bmatrix}\begin{bmatrix}
    f(x_0) &
    f(Z_1) &
    \cdots &
    f(Z_t)
\end{bmatrix} \in \RR^{(t+1)\times (t+1)},$$
where $(Z_1, \cdots, Z_t)\sim \cN_t(0, R^t).$ In this case, Theorem \ref{thm:main} implies that for any PL test function $\varphi:\RR^t\in \RR$ ur main theorem implies in this case that 
$$ \frac 1n \sum_{i\in [n]} \varphi(x^1_i,\cdots, x^t_i)  \toprobalong \EE \varphi(Z_1, \cdots, Z_t)\,. $$

\begin{remark}
    This example can easily be generalized to $K\times K$ blocks and $K$
correlation coefficients $\rho_1, \cdots, \rho_K$.
\end{remark}

\subsubsection{$d$-regular random matrix}

In this example, we consider a symmetric matrix $X$ where $X_{ij}$ are independent centered random
variables with variance $1$ up to the symmetry, i.e. $X$ is a $T$-correlated random matrix where 
$T=\bs{1}_{n\times n}$. Let Assumption \ref{ass:X} hold, let $d = d_n =\lfloor C\log^{(\nu\vee   1)}(n) \rfloor$ where $\nu>0$ is given by Assumption \ref{ass:nu}. Let $A$ be the $n\times n$ adjacency matrix of a $d$-regular non oriented graph, in particular
$$ 
\left|\{j \in [n] \ | \ A_{ij} = 1\}\right| = d \qquad \text{and} \quad \left|\{i \in [n] \ | \ A_{ij} = 1\}\right| = d\,,
$$
and consider the variance profile matrix $S= \frac{1}{d} A$. Let $f:\mathbb{R}\to \mathbb{R}$ a Lipschitz function (hence satisfying Assumption~\ref{ass:activation_function}) and set 
$$
W = S\odot X = \frac{1}{d} A\odot X\,,\qquad \bs{x}^0 = x_0 \bs{1}_n\qquad\textrm{and}\qquad  (\bs{x}^t)_{t\in\NN} = \AMP\left(X, S, f, \bs{x}^0\right)\,.
$$
Introducing the sets $I_k := \{j \in [n] \ | \ A_{kj} = 1 \}$ and the $n\times 1$ vector 
$\bs{v}=\left(\langle f'(\bs{x}^t)\rangle_{I_k}\,,\ k\in [n]\right)$, the recursion writes 
$$ 
\bs{x}^{t+1} = Wf(\bs{x}^t) - \bs{v} \odot f(\bs{x}^{t-1})\,. 
$$

Let us now simplify the Density Evolution equations 
defined \ref{def:de} for this particular case. We notice that $H_i^0 = (h(x_0))^2 =: H^0$ does not depend on
$i$, so $R^1_i = \sum_{j\in I_i} \frac{1}{d} H_i^0 = H^0 := R^1$ which is also independent of $i$ and $n$.
By induction, we can reduce DE equations to ``asymptotic'' DE equations, meaning that they do not
depend on $n$. In fact, if $R_i^t \in \RR^{t\times t}$ is independent of $i$, consider 
$(Z_i^1, \cdots, Z_i^t)\sim \cN_t(0, R_i^t)$, these $n$ $t$-dimensional random vectors have the same law. Now let
$i\in [n]$ and consider the value of $R^{t+1}_i$,
$$ R^{t+1}_i = \sum_{j\in I_i} \frac{1}{d} \EE \begin{bmatrix}
    f(x_0) \\
    f(Z_i^1)\\
    \cdots \\
    f(Z_i^t)
\end{bmatrix} \begin{bmatrix}
    f(x_0) &
    f(Z_i^1)&
    \cdots &
    f(Z_i^t)
\end{bmatrix} = \EE \begin{bmatrix}
    f(x_0) \\
    f(Z_1)\\
    \cdots \\
    f(Z_t)
\end{bmatrix} \begin{bmatrix}
    f(x_0) &
    f(Z_1)&
    \cdots &
    f(Z_t)
\end{bmatrix}$$
where $(Z_1, \cdots, Z_t) \sim \cN_t(0,R^t)$, thus $R_{i}^{t+1}$ is also independent of $i$ and $n$ and 
we recover the ``asymptotic" DE equations. Our main theorem implies in this case that 
$$ \mu^{\bs{x}^1, \cdots, \bs{x}^t} \toprobalong \mathcal{L}(Z_1, \cdots, Z_t).$$

\subsection{Extension to non-centered random matrices}\label{sec:extension}

We have considered so far an AMP algorithm with a centered random matrix. We extend our AMP result to consider a non-centered matrix model. More precisely, we add to our centered random matrix model a deterministic rank-one perturbation - notice that our result could easily be generalized to any finite-rank perturbation.

Let $W$ be a random matrix model as in Theorem~\ref{thm:main}, with variance profile $S$ and correlation profile $T$. Let $\bu,\bv\in \RR^n$ two deterministic vectors satisfying $\lVert \bu\rVert, \lVert \bv\rVert = \mathcal{O}(n^{-1})$. Consider the following matrix model,
\begin{equation}
    A = \lambda \bu \bv^\top + W.
\end{equation}
Before stating the AMP recursion based on matrix $A$, we adapt the Density Evolution equations introduced in Definition~\ref{def:de}.
In this section, we shall use the notation $h_t(x,\eta) $ instead of $ h(x,\eta, t)$ as simplification of the notations.

\begin{definition}
    \label{def:noncenteredde}
    Let $\bs{x}^0=(x^0_i)\in \RR^n$, $\bs{\eta}=(\eta_i) \in \RR^n$, $\bs{\bu}=(u_i) \in \RR^n$ and $\bs{\bv}=(v_i) \in \RR^n$ be deterministic vectors, $S = \left(s_{ij}\right)_{1\leq i,j \leq n}$ a matrix with non-negative elements and $h:\RR^2 \times \NN \to \RR$ an activation function.
    \begin{enumerate}[label=\alph*)]
    \item \emph{Initialization.} For any $i\in [n]$, define the positive numbers $H_i^0$, $R_i^1$ and $\mu_1$ as
    \begin{equation*}
        H_i^0 := \left(h_0(x_i^0, \eta_i)\right)^2,\qquad R_i^1 := \sum_{j = 1}^{n} s_{ij}H_j^0\qquad \text{and} \qquad \mu_1:= \lambda \left\langle \bv, h_0(\bx^0, \bs{\eta})\right\rangle\,.
    \end{equation*}
    Let $Z_i^1\sim\cN(0,R^1_i)$, assume that for all $i\in [n]$, the $Z_i^1$'s are independent and set 
    $$\bs{Z}^1=(Z^1_i)_{i\in [n]}\ .$$ 

    \item \emph{Step 1.} Let $i\in[n]$ be fixed. Given $Z^1_i$, let 
    \begin{eqnarray*}
    H_i^1&=&\EE \begin{bmatrix}
            h_0\left(x^{0}_i, \eta_i\right) \\
            h_1\left(Z^{1}_i + \mu_0 u_i, \eta_i\right) 
            \end{bmatrix}\begin{bmatrix}
            h_0\left(x^{0}_i, \eta_i\right) &
            h_1\left(Z^{1}_i + \mu_0 u_i, \eta_i\right) 
            \end{bmatrix}\ ,\\
            R^2_i&=&\sum_{j=1}^n s_{ij} H_j^1\quad \text{and} \quad \mu_2 = \lambda \EE\left[\left\langle \bv, h_1\left(\bZ^{1} + \mu_1 \bu, \bs{\eta}\right)\right\rangle\right]\,.
    \end{eqnarray*}
Let $(Z_i^1, Z_i^2)\sim\cN_2(0,R^2_i)$, denote by $\vec{Z}_i^2=(Z_i^1,Z_i^2)$. 
Assume that for all $i\in [n]$, the $\vec{Z}_i^2$'s are independent. Set $\bs{Z}^2=(Z^2_i)$. \\
    \item \emph{Step t.} Let $i\in [n]$ be fixed. Given $(\bs{Z}^1,\cdots, \bs{Z}^t)$ and $\vec{Z}^t_i=(Z_i^1,\cdots, Z_i^t)$, let      
     $$
        H_i^t =  \EE \begin{bmatrix}
            h_0(x^{0}_i, \eta_i) \\
            h_1(Z^{1}_i + \mu_1 u_i, \eta_i) \\
            \vdots                \\
            h_t(Z^{t}_i + \mu_t u_i, \eta_i)\end{bmatrix}\begin{bmatrix}
            h_0(x^{0}_i, \eta_i) &
            h_1(Z^{1}_i + \mu_1 u_i, \eta_i) &
            \cdots                &
            h_t(Z^{t}_i + \mu_t u_i, \eta_i)\end{bmatrix}\, .\\
     $$
    Denote
    $$
    R_i^{t+1} =  \sum_{j=1}^n s_{ij}H_j^t \qquad \textrm{and}\qquad  \mu_{t+1} = \lambda \EE\left[\left\langle \bv, h_t\left(\bZ^{t} + \mu_t \bu, \bs{\eta}\right)\right\rangle\right]\,.
    $$
     Let $(Z_i^1, Z_i^2,\cdots Z_i^{t+1})\sim\cN_{t+1}(0,R^{t+1}_i)$, denote by $\vec{Z}_i^{t+1}=(Z_i^1,Z_i^2,\cdots, Z_i^{t+1})$. Assume that for all $i\in [n]$, the $\vec{Z}_i^{t+1}$'s are independent. Set $\bs{Z}^{t+1}=(Z^{t+1}_i)$. \\
    \end{enumerate}
    Consider the sequence of $n$-dimensional Gaussian random vectors $\left(\bs{Z}^{t}\right)_{t\in \mathbb{N}}$. We denote 
    $$
    \left(\bs{Z}^1, \cdots, \bs{Z}^t\right) \sim \widetilde{\DE}\left(h, \bs{x}^0, S, t,\bu, \bv\right)\, .
    $$

\end{definition}
We are now in position to state the AMP recursion. 
\begin{equation}
\label{eq:ampnoncentered}
   \bs{x}^{t+1} = A h_t(\bs{x}^t, \bs{\eta}) - \diag\left(V \EE \partial h_t(\bs{Z}^{t} + \mu_t \bu, \bs{\eta})\right) h_{t-1}(\bs{x}^{t-1}, \bs{\eta}),
\end{equation}
where $\bs{Z}^t$ and $\mu^t$ are defined as in Definition~\ref{def:noncenteredde}.

The following theorem describes the asymptotic behavior of $\left(\bs{x}^t\right)_{t\in\NN}$ when $n$ goes to infinity.
\begin{theorem}
\label{th:noncentered}
    Let Assumptions A-\ref{ass:X} to A-\ref{ass:nonDegen} hold true, with associated $\nu$, $\cQ_{\eta}$ and $\cQ_{x}$.
    Consider the AMP sequence $(\bs{x}^t)_t$ defined in \eqref{eq:ampnoncentered}.
    Consider the sequence $n$-dimensional Gaussian random vectors $\left(\bs{Z}^t\right)_{t\in \NN}$ and the scalars $(\mu_t)_t$ defined by the DE equations in Definition~\ref{def:noncenteredde}.
    
    Let $t\ge 1$ and $\bs{\beta} = (\beta^{(n)}_i)\in \RR^n$ uniformly bounded, i.e. $\sup_n \max_{i\in [n]} |\beta^{(n)}_i|< \infty$. For any pseudo-Lipschitz test function $\varphi :\RR^{t+1} \to \RR$, it holds that
    \begin{equation*}
        \frac{1}{n} \sum_{i\in [n]} \beta^{(n)}_i  \left\{ 
        \varphi\left(\eta_i, x_i^1, \cdots, x_i^t\right) - \EE\left[\varphi\left(\eta_i, Z_i^1+\mu_1 u_i, \cdots, Z_i^t+\mu_t u_i\right)\right] \right\}
        \quad \toprobalong\quad  0\, .
    \end{equation*}
\end{theorem}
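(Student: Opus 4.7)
The plan is to reduce Theorem~\ref{th:noncentered} to Theorem~\ref{thm:main} by absorbing the rank-one drift of $A=\lambda\bu\bv^\top+W$ into a deterministic shift of the iterates. Writing the recursion as
$$\bs{x}^{t+1}=Wh_t(\bs{x}^t,\bs{\eta})+\lambda\bu\langle\bv,h_t(\bs{x}^t,\bs{\eta})\rangle-\diag\bigl(V\,\EE\,\partial h_t(\bs{Z}^t+\mu_t\bu,\bs{\eta})\bigr)h_{t-1}(\bs{x}^{t-1},\bs{\eta}),$$
set $\bs{y}^t:=\bs{x}^t-\mu_t\bu$ with the deterministic $\mu_t$ from Definition~\ref{def:noncenteredde}, augment the parameter vector to $\tilde{\bs{\eta}}_i:=(\eta_i,u_i)$, and introduce the modified activation $\tilde h_t(y,(\eta,u)):=h_t(y+\mu_tu,\eta)$. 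Denoting the empirical analogue of $\mu_{t+1}$ by $\tilde\mu_{t+1}:=\lambda\langle\bv,h_t(\bs{x}^t,\bs{\eta})\rangle$, the recursion for $\bs{y}^t$ becomes
$$\bs{y}^{t+1}=W\tilde h_t(\bs{y}^t,\tilde{\bs{\eta}})-\diag\bigl(V\,\EE\,\partial\tilde h_t(\bs{Z}^t,\tilde{\bs{\eta}})\bigr)\tilde h_{t-1}(\bs{y}^{t-1},\tilde{\bs{\eta}})+(\tilde\mu_{t+1}-\mu_{t+1})\bu.$$
Up to the residual scalar drift $\tilde\mu_{t+1}-\mu_{t+1}$, this is exactly the centered AMP of Definition~\ref{def:AMP-recursive-scheme} driven by $W$. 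A direct unrolling shows that the DE equations of Definition~\ref{def:de} applied to $(\tilde h,\tilde{\bs{\eta}})$ produce the very same sequence $(\bs{Z}^t)_t$ as Definition~\ref{def:noncenteredde}, because the shifts $\mu_tu_i$ appear inside the arguments of $h$ exactly as prescribed.

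Let $(\tilde{\bs{y}}^s)_{s\ge 0}$ denote the clean centered AMP obtained by dropping the residual drift. Assumptions A-\ref{ass:activation_function} and A-\ref{ass:nonDegen} transfer from $h$ to $\tilde h$ once one enlarges $\cQ_\eta$ to include the compact range of $u_i$ and enlarges $D_h(t)$ by $\max_{s\le t}|\mu_su_i|$, so Theorem~\ref{thm:main} applies to $(\tilde{\bs{y}}^s)_{s\ge 0}$. The argument then runs by induction on $t$. At $t=1$ one has $\tilde\mu_1=\mu_1$ by construction and $\bs{y}^1=\tilde{\bs{y}}^1$ identically. Assuming the conclusion for all $s\le t$, apply it to the centered AMP with weights $\beta_i^{(n)}=nv_i$ (uniformly bounded thanks to the hypothesis $\|\bv\|=\mathcal O(n^{-1})$) and with test function $(\tilde\eta,y_1,\dots,y_s)\mapsto\lambda h_s(y_s+\mu_su,\eta)$, which is pseudo-Lipschitz, to deduce $\tilde\mu_{t+1}\toprobashort\mu_{t+1}$. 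A Gronwall-type stability estimate then controls the vector discrepancy: subtracting the two recursions yields
$$\|\bs{y}^{t+1}-\tilde{\bs{y}}^{t+1}\|_n\ \le\ L\,\|W\|\,\|\bs{y}^t-\tilde{\bs{y}}^t\|_n+L'\,\|\bs{y}^{t-1}-\tilde{\bs{y}}^{t-1}\|_n+|\tilde\mu_{t+1}-\mu_{t+1}|\,\|\bu\|_n,$$
where the Lipschitz constants of $\tilde h$, the bounded operator norm of the Onsager diagonal (A-\ref{ass:S} controls the row sums of $V$), and the almost-sure spectral-norm bound on $W$ from Proposition~\ref{prop:spectralNormBound} close the induction. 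Finally, since $\bs{x}^t=\bs{y}^t+\mu_t\bu$ holds identically, composing a pseudo-Lipschitz test function $\varphi(\eta,x_1,\dots,x_t)$ with the deterministic shift $(y_1,\dots,y_t)\mapsto(y_1+\mu_1u,\dots,y_t+\mu_tu)$ gives a pseudo-Lipschitz $\tilde\varphi$ in $(\tilde\eta,y_1,\dots,y_t)$, and the claim for $\bs{x}^t$ follows from the one established for $\tilde{\bs{y}}^t$ by splitting via triangle inequality and invoking the PL property together with $\|\bs{y}^s-\tilde{\bs{y}}^s\|_n\toprobashort 0$ for $s\le t$.

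The main obstacle is the simultaneous bookkeeping of the two interacting perturbations across the induction: the \emph{scalar} drifts $\tilde\mu_{s+1}-\mu_{s+1}$ (one per step, genuinely random, controlled by applying the induction hypothesis with a weight sequence extracted from $\bv$ of appropriate normalization), and the \emph{vector} discrepancies $\bs{y}^s-\tilde{\bs{y}}^s$ (propagating through all previous steps via a Lipschitz-plus-spectral-norm chaining). A subsidiary technical issue is verifying that $\tilde h$ with the augmented parameter $(\eta_i,u_i)$ still fulfills A-\ref{ass:activation_function}--A-\ref{ass:nonDegen}, which uses compactness of the range of $\bu$ together with uniform boundedness of $(\mu_t)_{t\le t_{\max}}$ derived from the $\|\bv\|$ hypothesis. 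Once both are in place, the proof reduces to the already-established centered case.
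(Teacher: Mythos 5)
Your proposal follows essentially the same route as the paper's Appendix~\ref{app:noncenteredproof}: shift the iterates by the deterministic drift $\mu_t\bu$, absorb the shift into a modified activation treating $u_i$ as an extra parameter, match the resulting centered DE equations with Definition~\ref{def:noncenteredde}, and run an induction that simultaneously controls the scalar residuals $\tilde\mu_t-\mu_t$ (via Theorem~\ref{thm:main} applied with weights proportional to $n v_i$ and a Lipschitz test function) and the vector discrepancy between the shifted recursion and the clean centered AMP (via the spectral-norm bound of Proposition~\ref{prop:spectralNormBound} and the Lipschitz property of the modified activation). Your explicit bookkeeping of the augmented parameter $\tilde\eta_i=(\eta_i,u_i)$ and of why Assumptions A-\ref{ass:activation_function}--A-\ref{ass:nonDegen} transfer to $\tilde h$ is in fact slightly more careful than the paper's terse remark, but the decomposition and the inductive scheme are the same.
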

This theorem can be seen as a corollary to Theorem~\ref{thm:main}, the proof is provided in Appendix \ref{app:noncenteredproof}.

\subsection{Open questions}  
\begin{enumerate} 
    \item Currently, the sparsity level is of order $\log^{\nu\vee 1}(n)$. Would it be possible to lower this level, and to dissociate the sparsity assumption from the parameter $\nu$ which is associated to the moments of the matrix entries?
    \item Would it be possible to improve the convergence in probability in Theorem \ref{thm:main} to an almost sure convergence?
    \item Our current assumptions over the entries of the matrix necessitate all the moments. Would it be possible by truncation techniques to lower this assumption?
    \item Would it be possible to establish the counterpart of Theorem \ref{thm:main} for AMP schemes \eqref{def:AMPW} or \eqref{def:AMP}?
\end{enumerate}

\subsection{Outline of the proof}
\label{subsec:outline}
Building on the methods developed in \cite{Bayati_2015} and \cite{HACHEM2024104276}, we start by analyzing a particular case of the Approximate Message Passing (AMP) algorithm with polynomial activation functions (Section~\ref{subsec:polyActivScalar}), which motivates the adoption of combinatorial techniques. In our setting, the variance profile is non-symmetric, and the matrix contains correlations between symmetric entries, necessitating modifications to the combinatorial approaches used in both \cite{Bayati_2015} and \cite{HACHEM2024104276} to fit our case. The combinatorial heart of the proof is presented in Section \ref{sec:combinatorics}. We then use density arguments to extend the results to non-polynomial activation functions that exhibit at most polynomial growth (Section~\ref{subsec:generalActiv}). 

It should be noted that the combinatorial methods in \cite{Bayati_2015} and \cite{HACHEM2024104276} rely on the assumption of a zero-diagonal variance profile, i.e., $S_{ii} = 0$ for all $i \in [n]$, which simplifies the derivations. We adopt this assumption in Sections \ref{subsec:polyActivScalar}, \ref{subsec:polyActivMatrix} and \ref{subsec:generalActiv} and then lift it via a perturbation argument in Section~\ref{subsec:nonzeroDiag}. Unless otherwise specified, we assume that the matrix $S$ has a zero-diagonal, implying, without loss of generality, that the random matrix $X$ also has a zero diagonal $X_{ii} = 0$.

\begin{figure}[h!]
\centering
\begin{tikzpicture}[
    node distance=2cm and 2cm,
    every node/.style={draw, rectangle, rounded corners, text width=4cm, align=left, minimum height=1cm}
]

\node (block4)  {\textbf{Matrix AMP-W}\\
Polynomial activation and test functions.\\
Combinatorial arguments.\\
Zero-diagonal assumption (A-\ref{ass:diag-nulle}).\\\textbf{[Section~\ref{subsec:polyActivMatrix}]}.
};
\node (block5) [right=of block4] {\textbf{Polynomial AMP-W}\\
Polynomial activation functions.\\
Zero-diagonal assumption (A-\ref{ass:diag-nulle}).\\\textbf{[Section~\ref{subsec:polyActivScalar}]}.};
\node (block6) [below=of block5] {\textbf{AMP-Z}\\
General activation functions approximation by polynomials.\\
Full diagonal - we lift (A-\ref{ass:diag-nulle}).\\
\textbf{[Section \ref{subsec:generalActiv}]}.};
\node (block7) [left=of block6] {\textbf{AMP-Z}\\
Main theorem. \\ \textbf{[Theorem~\ref{thm:main}]}.};

\node (block1) [above left= 1cm and -1cm of block5] {From polynomial to general test functions with at most polynomial growth.\\ \textbf{[Lemma~\ref{lem:polyActivation}]}.};
\node (block2) [below right=0cm and 0.5cm of block5] {Diagonal perturbation technique to lift (A-\ref{ass:diag-nulle}).\\ \textbf{[Section~\ref{subsec:nonzeroDiag}]}.};

\draw[->, very thick] (block4) -- (block5);
\draw[->, very thick] (block5) -- (block6);
\draw[->, very thick] (block6) -- (block7);

\draw[->, thick] (block1) -- (block5);
\draw[->, thick] (block2) -- (block6);

\end{tikzpicture}
\caption{Proof steps.}
\label{fig:proof-graph}
\end{figure}
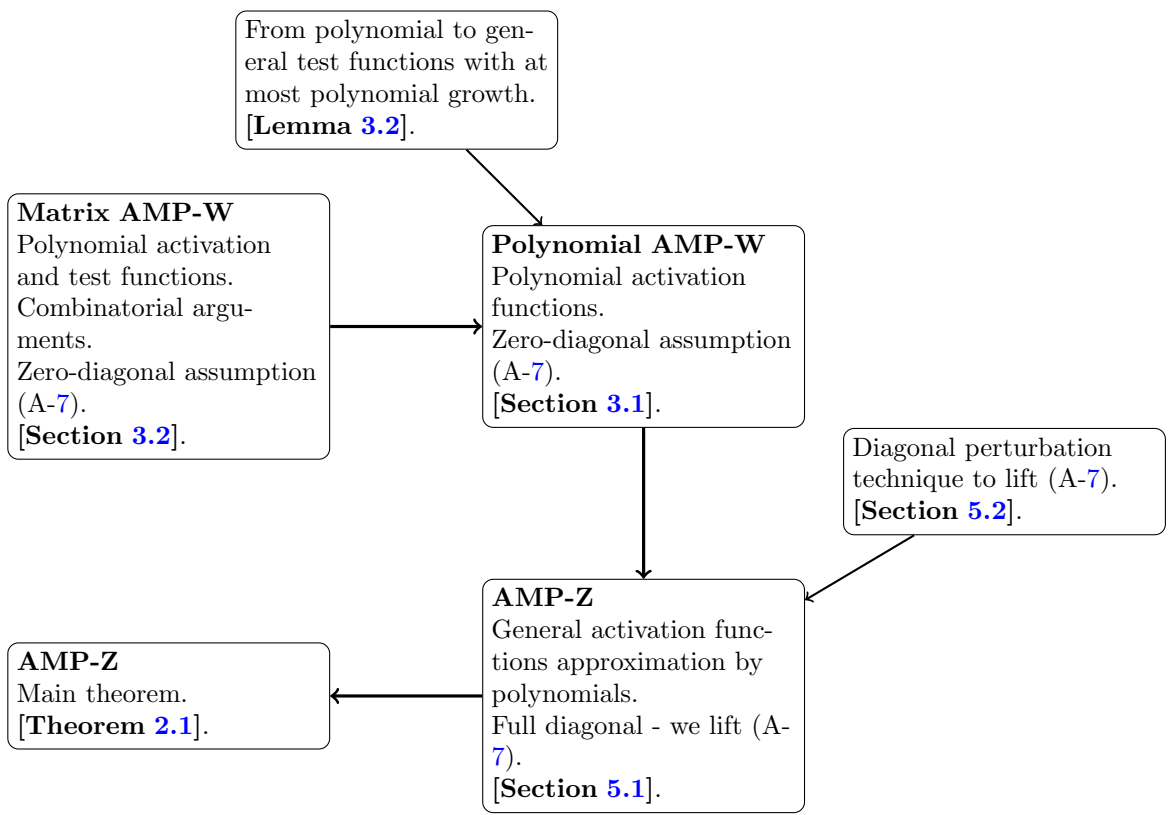

\section{AMP and Matrix AMP for polynomial activation functions}
\label{sec:polyActiv}

We present hereafter the AMP algorithm for polynomial activation functions, a suitable framework to establish the proof by combinatorial techniques, see \cite{Bayati_2015,HACHEM2024104276}. In Section \ref{subsec:polyActivScalar}, we state Theorem \ref{thm:amp-pscal} for iterates that are $\RR^n$-valued.
 
In Section \ref{subsec:polyActivMatrix}, we state a result for iterates that are $\RR^{n\times q}$-valued, a more general result that will imply Theorem \ref{thm:amp-pscal}. The extension to general pseudo-Lipschitz functions will be performed in Section~\ref{subsec:generalActiv}. 

The following technical assumption (to be lifted in Section \ref{subsec:nonzeroDiag}) will be used hereafter.

\begin{assumption}[variance profile with vanishing diagonal]\label{ass:diag-nulle}
    The deterministic $n\times n$ matrix $S=(s_{ij})_{1\le i,j\le n}$ has non-negative elements with null elements on the diagonal:
    $$
    S_{ii}=0 \qquad \textrm{for}\quad i\in [n]\, .
    $$
\end{assumption}

\begin{remark}
Assumption A-\ref{ass:diag-nulle} is very convenient to establish the statistical properties of the AMP iterates for polynomial activation functions, as the proof relies on combinatorial techniques. The fact that the diagonal of the variance profile $S$ is zero substantially simplifies the combinatorics. This assumption is relaxed in Theorem \ref{thm:main} by means of perturbation arguments (see Section~\ref{subsec:nonzeroDiag}).
\end{remark}

\subsection{AMP for polynomial activation functions}
\label{subsec:polyActivScalar}

Let $d\ge 1$ be a fixed positive integer independent from $n$. For every integer $t\ge 1$, consider a uniformly bounded triangular array of real coefficients 
\begin{equation}
        \label{bnd-ag}
\Big( \alpha_{\ell}(i, t, n)\,,\ \ell\le d\,,\ i\in [n]\,,\ n\ge 1\Big)\qquad \textrm{with}\qquad 
\sup_n \max_{\ell\leq d} \max_{i\in[n]} \left| \alpha_{\ell}(i,t,n) \right| < \infty\,.
\end{equation}
The following function will play a key role in the sequel:
\begin{eqnarray}
p:\mathbb{R} \times [n] \times \mathbb{N}&\to& \RR\,, \label{eq:polynomials}\\
(u,i,t)\quad &\mapsto & p(u,i,t) =\sum_{\ell = 1}^{d} \alpha_\ell (i,t,n) u^\ell\,.  \nonumber
\end{eqnarray}
Function $p$ is a polynomial in $u$ with degree bounded by $d$. It depends on $n$ via the coefficients $\alpha_\ell (i,t,n)$. To lighten the notations, we drop the dependence of $\alpha_\ell(i,t,n)$ in $n$ and simply write $\alpha_\ell(i,t)$ and do not indicate the dependence of $p$ in $n$.

Following Definition \eqref{def:AMPW}, let $\bs{\cx}^{0} \in \mathbb{R}^n$ be deterministic and define 
\begin{equation*}
    \left(\bs{\cx}^t\right)_{t\ge 1} = \AMPW\left(X, S,p, \bs{\cx}^0 \right)\,,\quad \bs{\cx}^0\in \RR^n\, ,
\end{equation*}
that is 
\begin{equation}
    \label{amp-posc}
    \bs{\cx}^{t+1} =
    W p(\bs{\cx}^{t},\cdot,t) -
    \diag \Bigl(W\odot W^\top \partial p(\bs{\cx}^{t},\cdot,t) \Bigr)
    p(\bs{\cx}^{t-1},\cdot,t-1) ,
\end{equation}
where $p(\bs{x},\cdot,t) = \left[p(x_i,i,t)\right]_{i=1}^n$ and $\partial p(\bs{x},\cdot,t)=  \left[\partial p(x_i,i,t)\right]_{i=1}^n$ for any $\bs{x}\in\mathbb{R}^n$.

We now present the AMP result for polynomial activation
functions. 
\begin{theorem}
    \label{thm:amp-pscal}
    
    Let A-\ref{ass:X}, A-\ref{ass:S} and A-\ref{ass:diag-nulle} hold true. Let $d\ge 1$ be fixed, $(\alpha_{\ell})$ and $p$ given by \eqref{bnd-ag} and \eqref{eq:polynomials}. Let $\bs{\cx}^0=(\check x^0_i)\in \RR^n$. Assume that there exists a compact set $\cQ_{\bs{\cx}}\subset \RR$ such that $\cx^0_i\in \cQ_{\bs{\cx}}$. Consider
    \begin{equation*}
    \left(\bs{\cx}^t\right)_{t\ge 1} = \AMPW\left(X, S,p, \bs{\cx}^0 \right)\,.
\end{equation*}
    Let $(\bs{\cZ}^1, \cdots, \bs{\cZ}^t) \sim \DE(S,p, \bs{\cx}^0, t)$ and denote by
    $\chR_i^t$ the covariance matrix of vector $\left(\cZ^1_i, \cdots, \cZ^{t}_i\right)$.
    Then for all $t,m\ge 1$
    \begin{equation}
        \label{mom-cx}
       \sup_n \max_{i\in[n]} \| \chR^{t}_i \| < \infty \qquad \textrm{and} \qquad
        \sup_n \max_{i\in[n]} \EE |\cx^{t}_i|^m < \infty\,.
    \end{equation}
    Given $t\ge 1$, let $d'\ge 1$ be fixed and consider function $\psi_n: \RR^t \times [n] \to \RR$, a multivariate polynomial with bounded degree: 
    $$
   \psi_n(x_1,\cdots, x_t, \ell) = \sum_{d_1+\cdots +d_t\le d'} \beta_n(d_1,\cdots, d_t,\ell) \prod_{i\in[t]}x_i^{d_i}\,,
    $$
    with $$
    \sup_{n\ge 1}\sup_{\ell\in[n]}\sup_{d_1+\cdots+d_t\le d'} \left| \beta_n(d_1,\cdots, d_t, \ell)\right| <\infty\, .
    $$ 
    Let $\cS^{(n)} \subset [n]$ be such that $| \cS^{(n)} | \leq C K_n$ where $K_n$ is given by A-\ref{ass:S}. Then,
    \begin{subequations}
        \label{cvg-pscal}
        \begin{align}
             & \frac{1}{K_n} \sum_{i\in \cS^{(n)}}
            \left\{ \psi_n(\cx^{1}_i,\ldots,\cx^{t}_i,i) -
            \EE\psi_n(\cZ^{1}_i, \ldots, \cZ^{t}_i,i) \right\} \toprobalong 0 \,,
            \quad \text{and} \label{cvg-small}             \\
             & \frac{1}{n} \sum_{i\in[n]}
            \left\{ \psi_n(\cx^{1}_i,\ldots,\cx^{t}_i,i) -
            \EE\psi_n(\cZ^{1}_i, \ldots, \cZ^{t}_i,i) \right\} \toprobalong 0 \,.\label{cvg-big} 
        \end{align}
    \end{subequations}
\end{theorem}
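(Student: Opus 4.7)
The plan is to prove Theorem~\ref{thm:amp-pscal} via the method of moments. Since $p$ has fixed bounded degree, iterating~\eqref{amp-posc} expresses each $\cx_i^t$ as a polynomial in the entries $\{W_{k\ell}\}$ with coefficients determined by the $\alpha_\ell(\cdot,\cdot)$ and by $\bs{\cx}^0$, and the test function $\psi_n(\cx_i^1,\ldots,\cx_i^t,i)$ is then itself a polynomial in $\{W_{k\ell}\}$ whose expectation can be computed by exploiting the independence and limited correlation structure of the $T$-correlated matrix $X$. The goal is to match the surviving terms with the Gaussian moments predicted by the DE equations of Definition~\ref{def:de}, and to control the fluctuations of the empirical sum via the row-sparsity of $W$ granted by A-\ref{ass:S}.

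The argument proceeds in three main steps. First, I would establish the bounds $\sup_n \max_i \|\chR_i^t\| < \infty$ and $\sup_n \max_i \EE|\cx_i^t|^m < \infty$ by induction on $t$, using the polynomial growth of $p$, the variance bound $s_{ij}\le C_S/K_n$, the row count $|\{j : s_{ij}>0\}|\le C_{\text{card}}K_n$, and the uniform moment control on $X_{ij}$ from A-\ref{ass:X}; the vanishing-diagonal assumption A-\ref{ass:diag-nulle} ensures that the self-interaction term $W_{ii}$ does not spoil the bounds. Second, I would expand $\cx_i^t$ recursively as a weighted sum over rooted labeled trees whose edges carry entries of $W$ and whose leaves carry factors of $p(\cx^0_j,j,0)$; the Onsager correction $\diag(W\odot W^\top\partial p)$ removes specific ``backtracking'' one-edge diagrams. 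The product that arises inside $\psi_n$ then expands as a sum over forests sharing the common root $i$. Third, centering of the $X$-entries implies that only diagrams whose $W$-edges pair up survive in expectation, with two relevant types of pairings: same-edge pairings contribute a factor $s_{k\ell}$, whereas reversed-edge pairings $\{W_{k\ell},W_{\ell k}\}$ contribute $\sqrt{s_{k\ell} s_{\ell k}}\,\tau_{k\ell} = V_{k\ell}$. The dominant contributions come from perfect same-edge matchings of the tree edges and exactly reproduce the DE covariance; the reversed-edge pairings that appear as non-tree diagonal corrections are precisely those cancelled by the Onsager correction built from $W\odot W^\top$, up to subdominant remainders.

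For the convergence statements~\eqref{cvg-small} and~\eqref{cvg-big}, a variance bound on the empirical sum suffices. Expanding the square and applying an analogous combinatorial analysis to the cross terms $\EE[\psi_n(\cx_i^\cdot,i)\psi_n(\cx_j^\cdot,j)] - \EE\psi_n(\cx_i^\cdot,i)\,\EE\psi_n(\cx_j^\cdot,j)$, one sees that these decay unless $i$ and $j$ are ``close'' in the support graph of $W$; since each row of $W$ has at most $CK_n$ nonzero entries, after $t$ iterations $\cx_i^t$ depends on at most $(CK_n)^t$ indices, which yields a variance of order $(CK_n)^{2t}/K_n$ for~\eqref{cvg-small} and $(CK_n)^{2t}/n$ for~\eqref{cvg-big}, both vanishing under A-\ref{ass:nu} since $K_n$ grows only poly-logarithmically. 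The main obstacle is the correlation profile $T$: the reversed-edge pairings $\EE W_{k\ell}W_{\ell k}=V_{k\ell}$ have no analogue in the classical Bayati--Montanari analysis nor in the symmetric sparse setting of~\cite{HACHEM2024104276}, so the tree-forest bookkeeping must carry orientation data and one must carefully verify that the Onsager term built from $W\odot W^\top$ cancels the problematic diagonal contributions at leading order while the subdominant remainders vanish as $n\to\infty$. This orientation-aware combinatorics is most cleanly carried out in a matrix-valued formulation (Theorem~\ref{thm:amp-vec}) that tracks several polynomial directions simultaneously, from which Theorem~\ref{thm:amp-pscal} follows by specialization.
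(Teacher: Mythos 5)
Your overall strategy correctly identifies the paper's route: reduce to a matrix-valued $\AMPW_q$ recursion (Theorem~\ref{thm:amp-vec}), prove that one by the tree/forest combinatorics (non-backtracking iterations, universality, pairing of $W$-edges with same-edge contributions $s_{k\ell}$ vs.\ reversed-edge contributions $V_{k\ell}$, cancellation of the problematic terms by the $W\odot W^\top$ Onsager correction), then specialize. That matches the paper.

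However, your variance argument for~\eqref{cvg-small} and~\eqref{cvg-big} has a genuine gap. You claim a variance of order $(CK_n)^{2t}/K_n$ for~\eqref{cvg-small}, arguing that for $i\in\cS^{(n)}$ fixed only $(CK_n)^{2t}$ indices $j$ can correlate with $i$. But $(CK_n)^{2t}/K_n = C^{2t}K_n^{2t-1}$ \emph{diverges}, so this estimate proves nothing; the same issue contaminates~\eqref{cvg-big} unless one additionally assumes $K_n = O(n^{1/(2t)})$, which is not available. The actual argument cannot rely merely on counting which pairs $(i,j)$ have a chance to covary and bounding each covariance by a constant; one needs the combinatorial moment analysis to show that each \emph{individual} cross-covariance $\cov(\psi_n(\cx_i^\cdot,i),\psi_n(\cx_j^\cdot,j))$ decays at a rate like $O(K_n^{-1/2})$ (because shared $W$-edges force the merged graph $G$ to have strictly fewer free labels than the edge-count $\mu/2$ would allow, exactly as in the bound $|\{G : \bU(G)=\bar G\}|\le CK_n^{(\mu-1)/2}$ used against $\EE[\prod W(T_k)]\le CK_n^{-\mu/2}$). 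Summing $(CK_n)^2$ pairs each of order $K_n^{-1/2}$ and dividing by $K_n^2$ then yields $O(K_n^{-1/2})\to 0$ without any appeal to the size of $K_n$ relative to $n$. Relatedly, your invocation of A-\ref{ass:nu} here is both unnecessary and misdirected: the theorem is stated without A-\ref{ass:nu} (the remark after the theorem says so explicitly), and A-\ref{ass:nu} only gives a lower bound $K_n \gtrsim \log^{\nu\vee 1}n$, not the upper bound ``poly-logarithmic growth'' your estimate would require.
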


\begin{remark}
    In this theorem, both the activation function and the test function used in the convergence formulation are polynomials. The general case for the activation function will be addressed later in Section~\ref{subsec:generalActiv}. Regarding the test functions, we extend this result in the following lemma to encompass general continuous functions that grow at most polynomially near infinity. Notice also that Assumption A-\ref{ass:nu} is not needed when dealing with AMP sequences having polynomial activation functions, this assumption is purely technical and is used when a comparison between two AMP sequences is provided.
\end{remark}

\begin{remark}
    The interesting regime in \eqref{cvg-small} is $|\cS^{(n)}|\sim K_n$. If $|\cS^{(n)}|\ll K_n$ then \eqref{cvg-small} is trivial in the sense that one can easily prove that both terms  
    $$
    \frac{1}{K_n} \sum_{i\in\cS^{(n)}}   
        \psi_n\left(\cx_i^1, \cdots, \cx_i^t, i\right) \qquad \textrm{and}\qquad 
        \frac{1}{K_n} \sum_{i\in\cS^{(n)}}    \EE\left[\psi_n\left(\cZ_i^1, \cdots, \cZ_i^t, i\right)\right] 
    $$
    converge to zero\footnote{By $|\cS^{(n)}|\sim K_n$, we mean that there exist $c,C>0$ such that 
    $cK_n\le |\cS^{(n)}| \le C K_n$ and by $|\cS^{(n)}|\ll K_n$, we mean that $|\cS^{(n)}|/ K_n\to 0$.}.
\end{remark}
\begin{lemma}
    \label{lem:polyActivation}
    Let $\bs{\check x}^0$ and $\bs{\eta}$ satisfy A-\ref{ass:initial_point}. Let $(\bs{\cx}^t)_{t\in\NN}$ and $(\bs{\cZ}^t)_{t\in\NN}$ as in Theorem~\ref{thm:amp-pscal}. Let $t,m\ge 0$ be fixed integers and let $\varphi : \cQ_{\eta} \times \RR^t \to \RR$ be a continuous function such that $$
    \left|\varphi(\alpha, u_1, \cdots, u_t)\right|\leq C\left(1 + |u_1|^m + \cdots + |u_t|^m\right)\,.
    $$ 
    For any sequence $(\beta^{(n)}_i\in \RR\,,\ i\in [n]\,,\ n\ge 1)$ such that 
    $
    \sup_n \max_{i\in [n]} |\beta^{(n)}_i|< \infty
    $, the following convergence holds:
    $$\frac{1}{n} \sum_{i\in[n]}
            \beta^{(n)}_i\varphi(\eta_i,\cx^{1}_i,\ldots,\cx^{t}_i) -\frac{1}{n} \sum_{i\in[n]}
            \beta^{(n)}_i\EE\varphi(\eta_i, \cZ^{1}_i, \ldots, \cZ^{t}_i) \toprobalong 0.$$
\end{lemma}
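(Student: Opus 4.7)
The plan is to approximate the continuous, polynomially growing function $\varphi$ on a compact set by a multivariate polynomial and then invoke Theorem~\ref{thm:amp-pscal}. Because $\varphi$ is defined on the non-compact domain $\cQ_{\eta}\times\RR^t$, a truncation argument is needed before Stone--Weierstrass can be applied. The two main inputs are: Stone--Weierstrass on the compact rectangle $\cQ_{\eta}\times[-M,M]^t$, and uniform tail control on $\cx_i^s$ and $\cZ_i^s$ coming from \eqref{mom-cx} and $\sup_n\max_i\|\chR_i^t\|<\infty$.

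First, I would establish a uniform tail estimate: for any integers $k,\ell\geq 1$, the bound $\EE[|X|^k\mathbf{1}_{|X|>M}]\leq \EE|X|^{k+\ell}/M^\ell$ combined with \eqref{mom-cx} yields
\[
\sup_{i,n}\ \EE\Bigl[\Bigl(1+\sum_{s=1}^t|\cx_i^s|^k\Bigr)\mathbf{1}_{\max_s|\cx_i^s|>M}\Bigr]\ \leq\ \frac{C(k,\ell)}{M^\ell},
\]
and similarly for $\cZ_i^s$, since all Gaussian moments are controlled by $\sup_n\max_i\|\chR_i^t\|<\infty$. In particular, this tail estimate can be made smaller than any prescribed positive power of $M^{-1}$.

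Next, for fixed $M>0$ and $\epsilon'>0$, Stone--Weierstrass produces a polynomial
\[
P(\eta,u_1,\ldots,u_t)=\sum_{k,\,d_1+\cdots+d_t\leq d'}c_{k,d_1,\ldots,d_t}\,\eta^k u_1^{d_1}\cdots u_t^{d_t},
\]
of some bounded total degree $d'=d'(M,\epsilon')$ with $\sup_{\cQ_{\eta}\times[-M,M]^t}|\varphi-P|<\epsilon'$. To fit the format of Theorem~\ref{thm:amp-pscal}, I would absorb the $\eta_i$- and $\beta_i^{(n)}$-dependence into the coefficients by setting
\[
\psi_n(u_1,\ldots,u_t,i):=\sum_{d_1+\cdots+d_t\leq d'}\beta_n(d_1,\ldots,d_t,i)\,u_1^{d_1}\cdots u_t^{d_t},\qquad \beta_n(d_1,\ldots,d_t,i):=\beta_i^{(n)}\sum_k c_{k,d_1,\ldots,d_t}\,\eta_i^k.
\]
Compactness of $\cQ_{\eta}$ and the uniform bound on $(\beta_i^{(n)})$ yield uniform boundedness of the coefficients $\beta_n(\cdot,i)$, so \eqref{cvg-big} of Theorem~\ref{thm:amp-pscal} gives
\[
\frac{1}{n}\sum_i\beta_i^{(n)}\bigl[P(\eta_i,\cx_i^1,\ldots,\cx_i^t)-\EE P(\eta_i,\cZ_i^1,\ldots,\cZ_i^t)\bigr]\toprobalong 0.
\]

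To conclude, a triangle-inequality decomposition isolates this polynomial term together with the two ``approximation'' differences $\tfrac{1}{n}\sum_i\beta_i^{(n)}(\varphi-P)(\eta_i,\cx_i^1,\ldots,\cx_i^t)$ and $\tfrac{1}{n}\sum_i\beta_i^{(n)}\EE[(\varphi-P)(\eta_i,\cZ_i^1,\ldots,\cZ_i^t)]$. Each of these is split according to whether $\max_s|\cx_i^s|\leq M$ (resp.\ $\max_s|\cZ_i^s|\leq M$): on the bounded part, the Stone--Weierstrass bound contributes at most $\sup_{n,i}|\beta_i^{(n)}|\cdot\epsilon'$; on the tail part, $|\varphi-P|$ is bounded by the polynomial growth of $\varphi$ plus $|P|$ itself, and its expected average is controlled by the tail estimate above. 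Taking $n\to\infty$ first, then $\epsilon'\to 0$, then $M\to\infty$ completes the proof. The main subtlety---and the reason moment bounds of all orders from \eqref{mom-cx} are essential---is that $P$ depends on $M$, so its coefficients may grow polynomially with $M$; by choosing $\ell$ in the tail estimate larger than both $m$ and $d'$, this growth is absorbed and the tail contribution still vanishes as $M\to\infty$.
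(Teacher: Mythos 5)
Your proposal takes a genuinely different route from the paper's. The paper proves this lemma via the method of moments: it introduces the $(t+2)$-dimensional random empirical measures $\mu_n=\tfrac1n\sum_i\delta_{(\beta_i,\eta_i,\cx_i^1,\dots,\cx_i^t)}$ and $\nu_n=\mathcal L(\beta_\theta,\eta_\theta,\cZ_\theta^1,\dots,\cZ_\theta^t)$ with $\theta\sim\mathcal U([n])$, notes that their moments match asymptotically by Theorem~\ref{thm:amp-pscal}, and then appeals to (a mild generalisation of) Lemma~\ref{lem:momentsTogeneral} --- a tightness, moment-determinacy-of-Gaussian-limits, and uniform-integrability argument --- to pass from moment convergence to convergence of expectations of continuous test functions with polynomial growth. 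Your route (truncation, Stone--Weierstrass on $\cQ_\eta\times[-M,M]^t$, apply the polynomial theorem, control the tails) is a standard alternative, and your observation that $\eta_i$ and $\beta_i^{(n)}$ should be absorbed into the coefficients $\beta_n(\cdot,i)$ is exactly the right manoeuvre to fit the format of Theorem~\ref{thm:amp-pscal}.

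However, there is a genuine gap in the final step: the tail contribution coming from the approximating polynomial $P$ is not actually controlled. The issue is that $P=P_{M,\epsilon'}$ has degree $d'=d'(M,\epsilon')$ and coefficients that both depend on $(M,\epsilon')$, and Stone--Weierstrass gives no quantitative control on either --- in particular $d'$ typically grows as $M\to\infty$ or $\epsilon'\to 0$. You propose to choose the Markov exponent $\ell$ "larger than both $m$ and $d'$," but since $d'$ itself varies with $M$, this forces $\ell=\ell(M)\to\infty$ as $M\to\infty$, and the constant $C(k,\ell)$ in your tail estimate (built from $\sup_{n}\max_i\EE|\cx_i^t|^{k+\ell}$) then also diverges. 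Since \eqref{mom-cx} only asserts finiteness of each fixed moment with no stated growth rate in the order, there is no way to conclude that $C(k,\ell(M))/M^{\ell(M)}\to 0$, so the tail term $\sup_{i,n}\EE[|P_{M,\epsilon'}(\eta_i,\cx_i^1,\dots,\cx_i^t)|\,\mathbf 1_{\max_s|\cx_i^s|>M}]$ is not shown to vanish in the iterated limit you describe. The paper's moment-method route avoids this entirely because it never needs to control a fixed polynomial off a compact set; it works directly with moment convergence plus uniform integrability of the fixed function $\varphi$.
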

\begin{proof}
    Define the two $t+2$ dimensional random measures $\mu_n$ and $\nu_n$ as follows

    \begin{equation*}
        \mu_n = \frac{1}{n}\sum_{i\in [n]} \delta_{(\beta_i, \eta_i, \cx_i^1, \cdots, \cx_i^t)} \quad \text{and} \quad \nu_n = \mathcal{L}\left(\beta_\theta, \eta_\theta, \cZ_\theta^1, \cdots, \cZ^t_\theta\right),
    \end{equation*}
    where $\theta\sim \mathcal{U}\left([n]\right)$ is independent. Consider the function $\psi(\beta, \eta, x_1, \cdots, x_t) = \beta \varphi\left(\eta, x_1, \cdots, x_t\right)$, and recall that $(\beta_i)$, $(\eta_i)$ and the covariance matrices $(R_i^t)$ are bounded, thus by some slight modification to Lemma~\ref{lem:momentsTogeneral} we get the desired result.
    
\end{proof}

\subsection{Matrix AMP for polynomial activation functions}
\label{subsec:polyActivMatrix}
In order to prove Theorem~\ref{thm:amp-pscal}, we  need to study a
matrix version of the AMP algorithm where the iterates $\bs{\cx}^{t}$
 are $\RR^{n\times q}$--valued matrices, $q\ge 1$ being a
fixed integer.
Using this framework, we  only need to express the
convergence result in Theorem~\ref{thm:amp-pscal} using test functions
acting only on the $t^{\text{th}}$ iterates instead of all previous
iterates. Consider the function
\begin{equation}\label{eq:f-poly}
f: \RR^q \times [n] \times \NN \longrightarrow \RR^q \,,\qquad f(\bu,l,t) = \begin{pmatrix} f_1(\bu,l,t) \\ \vdots \\ f_q(\bu,l,t)
        \end{pmatrix}\ ,
\end{equation}
where each component $f_r$ is a polynomial in $\bu \in \mathbb{R}^q$, with degree bounded by $d$, written as
\[
    f_r(\bu,\ell,t) = \sum_{\substack{\bs{i}=(i_1,\cdots, i_q)\in \NN^q\\i_1+\cdots+i_q \leq d }}
    \alpha_{\bs{i}}(r,\ell,t) \bs{u}^{\bs{i}}\,,
\]
(recall the notation $\bs{u}^{\bs{i}}=\prod_{s\in [q]}u_s^{i_s}$). Given a deterministic $n$-uple
$(\bcx^{0}_1,\ldots, \bcx^{0}_n)$ where $\bcx^{0}_i$ is a $q$-dimensional vector, the AMP iterates are recursively defined for $t\geq 1$ as follows:
\begin{equation}
    \label{ampol}
    x^{t+1}_{i}(r) =
    \sum_{\ell\in[n]} W_{i\ell} f_r(\bcx^{t}_\ell,\ell,t)
    - \sum_{s\in[q]} f_s(\bcx^{t-1}_i,i,t-1)
    \sum_{\ell\in[n]} W_{i\ell}W_{\ell i}
    \frac{\partial f_r}{\partial x(s)}
    (\bcx^{t}_\ell,\ell,t) \,,
\end{equation}
for $r\in [q]$ and $f(\cdot, \cdot, -1) \equiv 0$. We  denote such a sequence by
$$ \left(\bs{x}^t\right)_{t\ge 1} = \AMPW_{q}\left(X, S, f, \bs{x}^0\right)\,,\quad \bs{x}^0\in \RR^{n\times q}\,. $$

\subsubsection*{DE Equations for matrix AMP}
\label{subsubsec:MatrixDE}
Similarly to the DE equations for standard AMP introduced 
in Definition~\ref{def:de}, we introduce here a
$\left(\mathbb{R}^{q}\right)^n$-valued sequence
of Gaussian random vectors $(U^t)_{t\in\mathbb{N}^\star}$
defined by
\[
    U^{t} = \begin{bmatrix} (U^{t}_1)^\top \\ \vdots \\ (U^{t}_n)^\top
    \end{bmatrix},
\]
where $\{U_i^t\}_{i\in [n]}$ are $\mathbb{R}^q$-valued independent Gaussian
random vectors, $U^t_i\sim \cN\left(0,Q_i^t\right)$ and
the $q\times q$ matrices $Q_i^t$ are defined recursively in $t$ by
\begin{equation}\label{eq:def-Q-poly}
    Q^{t+1}_i =  \sum_{\ell\in[n]} s_{i\ell}
    \EE f(U^{t}_\ell, \ell, t) f( U^{t}_\ell, \ell, t)^\T
    \qquad \text{for} \quad i \in [n],
\end{equation}
with the convention that $U^0 := \bs{x}^0$. We  denote
\begin{equation}
\label{eq:MatrixDE}
   U^t\sim \DE_q\left(S,f, \bs{x}^0, t\right).
\end{equation}

The following Theorem
is the key component to the proof of Theorem~\ref{thm:amp-pscal}.
\begin{theorem}
    \label{thm:amp-vec}
    Let Assumptions A-\ref{ass:X} and A-\ref{ass:S} hold true and $q\ge 1$ be fixed. 
   Let $f$ be defined by \eqref{eq:f-poly} and $\bs{x}^0\in \RR^{n\times q}$. Assume that for each $t \ge 1$, there exists a
    constant $C = C(t) > 0$ such that
    \begin{equation}
        \label{alpha-vec}
        \left| \alpha_{i_1,\ldots,i_q}(r,l,t) \right| \leq C,
    \qquad \text{and}\qquad
        \sup_n \max_{i\in[n]} \left\| \bcx^{0}_{i} \right\|  < \infty\ .
    \end{equation} 
    Consider the
    iterative algorithm
    $ \left(\bs{x}^t\right)_{t\ge 1} = \AMPW_q\left(X, S, f, \bs{x}^0\right)$,
    and let $Q_i^t$ and $U^t$ be defined by \eqref{eq:def-Q-poly}--\eqref{eq:MatrixDE}. 
    Then we have,
    \begin{equation}
        \label{Qfini}
        \forall t > 0\,, \qquad
        \sup_n \max_{i\in[n]} \| Q^{t}_i \| < \infty .
    \end{equation}
    Moreover,
    \begin{equation}
        \label{Exbnd}
        \forall t > 0\,, \ \forall \bm \in \NN^q\,, \qquad
        \sup_n \max_{i\in[n]} \EE | (\bcx^t_i)^\bm | < \infty.
    \end{equation}
    Let $\psi : \RR^q \times [n] \to \RR$ be such that
    $\psi(\cdot, l)$ is a multivariate polynomial with a bounded
    degree and bounded coefficients as functions of $(l,n)$.
    Let $\cS^{(n)} \subset [n]$ be a non empty set such that
    $| \cS^{(n)} | \leq C K_n$. Then,
    \begin{subequations}
        \label{cvg-vec}
        \begin{align}
             & \frac{1}{K_n} \sum_{i\in \cS^{(n)}}
            \psi(\bcx^{t}_i,i) - \EE\psi(U^{t}_i,i) \toprobalong 0 
            \quad \text{and}                       \\
             & \frac{1}{n} \sum_{i\in[n]}
            \psi(\bcx^{t}_i,i) - \EE\psi(U^{t}_i,i) \toprobalong 0 \,.\label{cvg-vec-big}
        \end{align}
    \end{subequations}
\end{theorem}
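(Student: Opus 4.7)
The plan is to proceed by induction on the iteration index $t$, simultaneously establishing the moment bounds \eqref{Qfini}, \eqref{Exbnd}, and the convergence \eqref{cvg-vec}. The base case $t=1$ is essentially a weighted law of large numbers / second-moment computation for sums of independent variables with variance profile $S$, using A-\ref{ass:X} and A-\ref{ass:S}. For the induction step, I would first establish \eqref{Exbnd} directly from the recursion \eqref{ampol}: each component $x^{t+1}_i(r)$ is a polynomial of bounded degree in the entries of $W$ and in the previous iterates, so its moments factor through A-\ref{ass:X}, A-\ref{ass:S}, the uniform bound \eqref{alpha-vec} on the coefficients, and the inductive moment bounds on $\bcx^t$. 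The DE bound \eqref{Qfini} then follows from $\|Q^{t+1}_i\| \le \sum_\ell s_{i\ell}\, \EE\|f(U^t_\ell,\ell,t)\|^2$ together with A-\ref{ass:S} and A-\ref{alpha-vec}.

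The heart of the argument is \eqref{cvg-vec}, which I would attack by a first- and second-moment computation after reducing to monomial test functions $\psi$. The idea is to \emph{unfold} the AMP recursion \eqref{ampol}: inserting \eqref{ampol} into itself $t$ times, any monomial in $\bcx^t_i$ becomes a finite (but growing with $t$) sum indexed by decorated walks on $[n]$ rooted at $i$; each walk contributes a product of matrix entries $W_{jk}$ (coming from the multiplication by $W$), a product of weights $W_{jk}W_{kj}$ (coming from the Onsager term $W\odot W^\top$), polynomial coefficients $\alpha$, and products of initial values $\bcx^0_\ell$. Taking expectation then forces the free $W$-edges to be paired; each pairing $W_{jk}^2$ contributes $s_{jk}$ and each transposed pairing $W_{jk}W_{kj}$ contributes $V_{jk} = \sqrt{s_{jk}s_{kj}}\,\tau_{jk}$, giving a weighted combinatorial sum of the type treated in \cite{Bayati_2015,HACHEM2024104276}.

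The next step is a graph-theoretic classification of the diagrams that arise. Following the tree-versus-cycle dichotomy of \cite{Bayati_2015} and its sparse extension in \cite{HACHEM2024104276}, only tree-shaped diagrams survive in the limit: each extra loop in a diagram costs a factor $s_{jk}\le C_S/K_n$ that is not compensated by the volume of admissible decorations, while A-\ref{ass:diag-nulle} forbids trivial self-loops that would otherwise create dominant but spurious terms. The surviving tree diagrams split into two classes: those in which the pairings involve only the forward edge $W_{jk}$ (matching the recursion \eqref{eq:def-Q-poly}), and those involving a transposed pair $W_{jk}W_{kj}$ and therefore a factor $V_{jk}$. The Onsager factor $W\odot W^\top \partial p$ in \eqref{amp-posc} is designed precisely to cancel the latter family of trees; once this cancellation is identified at the diagrammatic level, the remaining tree sum reproduces exactly the covariance $Q^{t+1}_i$ of \eqref{eq:def-Q-poly}, which gives the desired mean. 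The two normalizations in \eqref{cvg-vec} — averaging over $\cS^{(n)}$ with size $\lesssim K_n$ and averaging over all of $[n]$ — are handled by the same second-moment estimate, the sparsity factor $K_n/n$ in the variance of each term compensating for the smaller normalization in the first case.

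I expect the main obstacle to be the systematic bookkeeping of the transposed pairings introduced by the correlation profile $T$, which do not arise in the symmetric setting of \cite{HACHEM2024104276} and only mildly in the elliptic setting of \cite{gueddari2024elliptic} where $T$ is constant. Concretely, one must (i) enumerate, diagram by diagram, the subset of edges that are paired with their transposes, (ii) check that the $W\odot W^\top$ correction in the Onsager term cancels these contributions at the level of leading-order trees rather than just in expectation, and (iii) control the fluctuations of $W\odot W^\top$ around its mean $V$ in the subleading terms, which is where A-\ref{ass:X} (all-moment boundedness of $X_{ij}$) is crucial for propagating high-moment bounds through the combinatorial sum. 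A further technical issue, inherited from the sparse regime, is verifying that diagrams with many internal vertices do not yield a combinatorial explosion: here the bound $|\{j : s_{ij}>0\}|\le C_{\text{card}}K_n$ in A-\ref{ass:S} is used to cap the number of decorations at each vertex, and the bound $s_{ij}\le C_S/K_n$ supplies the smallness needed for concentration.
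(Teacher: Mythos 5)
Your proposal captures the right spirit — moment method, tree diagrammatics, and the role of the sparsity and correlation profiles — but it has a genuine gap at its core. You assert that after the tree-versus-cycle classification, ``the remaining tree sum reproduces exactly the covariance $Q^{t+1}_i$ of \eqref{eq:def-Q-poly}.'' This is precisely the conclusion of the theorem, and your proposal offers no mechanism to prove it. The tree sum is a moment formula involving products of $s_{jk}$ and $V_{jk}$ along decorated trees, while $Q^{t+1}_i$ is defined by a recursion over Gaussian expectations; equating them requires a separate argument. The paper resolves this by a chain of reductions that your direct-unfolding strategy skips entirely: first replace $\bs{x}^t$ by non-backtracking iterates $\bs{z}^t$ (Lemma~\ref{lem:xtzt}, Proposition~\ref{prop:xtzt}), so that the tree indexing is clean and the Onsager role becomes a Taylor-expansion cancellation rather than a ``diagrammatic'' one; then invoke universality to replace $X$ by any distribution with the same first two moments (Proposition~\ref{prop:ztzttilde}); then replace the single $W$ by a fresh independent Gaussian copy $W^t$ at each step (Proposition~\ref{prop:ztyt}); and only at that point does the conditional Gaussian structure of $\bs{y}^t_{i\to j}$ given the past make the DE recursion \eqref{eq:def-Q-poly} manifest (Proposition~\ref{prop:ytut}). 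Without some analogue of these reductions, your ``the trees match the DE'' step is circular.

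Two smaller issues. First, your description of the Onsager cancellation is not quite how it works: the term $\diag(W\odot W^\top \partial f)$ does not cancel a ``family of transposed-pair trees'' diagram by diagram; rather, after a Taylor expansion of $f_r(\bcx^t_\ell)$ around $z^t_{\ell\to i}$, the leading correction $W_{\ell i}f(z^{t-1}_{i\to\ell})\,\partial f(z^t_{\ell\to i})$ is, up to higher order, exactly the Onsager summand, and the uncancelled remainder is a sum over a specific class $\cB^t$ of trees with backtracking paths of length $3$ or backtracking stars (Lemma~\ref{lem:xtzt}); showing these are subleading is a separate counting argument, not a cancellation. Second, the moment bound \eqref{Exbnd} does not follow from a naive induction ``plugging in the inductive bounds,'' because $W$ and $\bcx^t$ are not independent: a direct H\"{o}lder bound on $\sum_\ell W_{i\ell}f_r(\bcx^t_\ell)$ loses a factor of $\sqrt{K_n}$. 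You need the same combinatorial expansion (or an explicit decoupling) to see that only paired edges contribute, which is where the $s_{ij}\le C_S/K_n$ and cardinality bounds of A-\ref{ass:S} actually get used.
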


\begin{remark}
    In this theorem, and particularly in the convergence described in \eqref{cvg-vec-big}, the result is not explicitly stated for all iterations from $1$ to $t$, as was done in \eqref{cvg-big}. Consequently, Matrix AMP can be interpreted as a more compact formulation of the ``standard" AMP. This distinction is further elucidated in the subsequent proof.
\end{remark}

\begin{proof}[Proof of Theorem~\ref{thm:amp-pscal}]
    Theorem~\ref{thm:amp-pscal} can be deduced from Theorem~\ref{thm:amp-vec}
    by adequately choosing $q$ as well as a precise construction of
    the activation function $f$ using the $\mathbb{R}$-valued polynomials
    $p$. Define the sequence $\left(\bs{\cx}^t\right)_{t\ge 1}$ as follows, 
    \begin{equation}
    \label{eq:cx_def}
        \left(\bs{\cx}^t\right)_{t\ge 1} = \AMPW\left(X, p, \bs{\cx}^0, S \right)\, .
    \end{equation}
    We shall establish the convergence \eqref{cvg-big} for each $t$ and prove that for all multivariate polynomials $\psi$ we have 
    $$\frac{1}{n} \sum_{i\in[n]}
            \left\{ \psi(\cx^{1}_i,\ldots,\cx^{t}_i,i) -
            \EE\psi(\cZ^{1}_i, \ldots, \cZ^{t}_i,i) \right\} \toprobalong 0 \,.$$
            where $(\bs{\cZ}^1, \cdots, \bs{\cZ}^t) \sim \DE(S,p, \bs{\cx}^0, t)$.
    To this end, let $\tau>0$ be fixed and chose $q=\tau$, construct the sequence $(\bx^{t})_{1\leq t \le \tau}$ of $\RR^{\tau\times \tau}$-valued matrices
    such that 
    \begin{equation*}
        \begin{split}
            \bx_i^1 &= \begin{pmatrix}
                \cx_i^1 & 0 & \cdots & 0
            \end{pmatrix}^\top, \\
            \bx_i^2 &= \begin{pmatrix}
                \cx_i^1 & \cx_i^2 & \cdots & 0
            \end{pmatrix}^\top, \\
            &\vdots \\
            \bx_i^\tau &= \begin{pmatrix}
                \cx_i^1 & \cx_i^2 & \cdots & \cx_i^\tau
            \end{pmatrix}^\top. \\
        \end{split}
    \end{equation*}
    Now using the polynomials $p$, we construct the function $f : \RR^\tau \times [n] \times \NN \to \RR^\tau$ 
    such that for all $i\in [n]$ and $0\leq \ell\leq \tau-1$ we have
    \begin{equation*}
        f(\bx, i, \ell) = \begin{pmatrix}
            p(x_i^0, i, 0) & p(\bx_i(1), i, 1) & \cdots & p(\bx_i(\ell), i, \ell) & 0 & \cdots & 0
        \end{pmatrix}^\top.
    \end{equation*}
    For $\ell\ge \tau$, we set $$f(\bx, i, \ell)=(0\ \cdots\ 0)\, .$$
    
In order to apply apply Theorem~\ref{thm:amp-vec}, we show that the sequence $(\bx^t)$ is given by
\begin{equation}
\label{eq:tprime}
    (\bx^{t})_{t\ge 1} = \AMPW_\tau\left(X,S, f, \bs{x}^0\right).
\end{equation}
Let $t \in [\tau-1]$. By definition, for $r\in [\tau]$ and $i\in [n]$ we have 
$$x^{t+1}_i(r)=\left\{\begin{array}{cc}
    \cx_i^{r}&  \text{if}\  r\leq t+1\,,\\
     0& \text{if} \ r>t+1 \,.
\end{array}\right. $$
In addition, by Eq.~\eqref{eq:cx_def} we know that 
$$\cx_i^{r} = \sum_{\ell \in [n]}W_{i\ell} p(\cx_\ell^{r-1},\ell, r-1) - \sum_{\ell\in [n]} W_{i\ell} W_{\ell i}\partial p(\cx_\ell^{r-1},\ell, r-1)p(\cx_i^{r-2},i, r-2),  $$
which implies that for $r\leq \tau+1$,
\begin{align*}
    x^{\tau+1}_i(r) &= \sum_{\ell \in [n]}W_{i\ell} p(x_\ell^{\tau}(r-1),\ell, r-1) \\&\quad- \sum_{\ell\in [n]} W_{i\ell} W_{\ell i}\partial p(x_\ell^{\tau}(r-1),\ell, r-1)p(x_i^{\tau-1}(r-2),i, r-2)\ ,\\
    &= \sum_{\ell \in [n]}W_{i\ell} p(x_\ell^{\tau}(r-1),\ell, r-1) \\&\quad - \sum_{s\in [t]}p(x_i^{\tau-1}(s-1),i, s-1)\sum_{\ell\in [n]} W_{i\ell} W_{\ell i}\partial p(x_\ell^{\tau}(r-1),\ell, r-1)\delta_{s,r-1}\ ,\\
    &=\sum_{\ell \in [n]}W_{i\ell} f_r(x_\ell^{\tau},\ell, \tau) - \sum_{s\in [t]}f_s(x_i^{\tau-1},i, \tau-1)\sum_{\ell\in [n]} W_{i\ell} W_{\ell i} \frac{\partial}{\partial x(s)}f_{r}(x_\ell^{\tau},\ell, \tau)\ ,
\end{align*}
which is precisely the recursion in \eqref{eq:tprime}. 

We can now apply the result of Theorem~\ref{thm:amp-vec} to the sequence $(\bs{x}^{t})$, which implies that for all polynomial test functions $\psi(.,\ell):\RR^\tau\to \RR$ we have 
\begin{equation*}
    \frac{1}{n} \sum_{i\in [n]} \psi(\bx_i^\tau,i) - \EE \psi(U_i^\tau,i) \toprobalong 0, \quad \forall \tau\in \NN\,,
\end{equation*}
which yields 
\begin{equation}
\label{eq:xU}
    \frac{1}{n} \sum_{i\in [n]} \psi(\cx_i^1, \cdots, \cx_i^\tau,i) - \EE \psi(U_i^\tau,i) \toprobalong 0, \quad \forall \tau\in \NN\,,
\end{equation}
where the $U^\tau$ is $(n\times \tau)$-dimensional random matrix with law $\DE_q(S,f,\bx^0,\tau)$, the latter is defined in \eqref{eq:MatrixDE}. Denote the columns of $U^\tau$ by $Z^1, \cdots, Z^\tau \in \RR^n$, then it is clear that $(Z^1, \cdots, Z^\tau)\sim \DE(p,\cx^0, S, \tau)$. The convergence in \eqref{eq:xU} becomes 
$$\frac{1}{n} \sum_{i\in [n]} \psi(\cx_i^1, \cdots, \cx_i^\tau,i) - \EE \psi(Z_i^1, \cdots, Z_i^\tau,i) \toprobalong 0, \quad \forall \tau\in \NN\,.$$
with $(Z^1, \cdots, Z^\tau)\sim \DE(S,p,\cx^0, \tau)$. Convergence \eqref{cvg-big} is established. One can prove similarly \eqref{cvg-small}, which concludes the proof of Theorem~\ref{thm:amp-pscal}.

\end{proof}

\section{Proof of Theorem~\ref{thm:amp-vec}: A combinatorial approach}
\label{sec:combinatorics}

Taking polynomial activation functions in Theorem~\ref{thm:amp-vec} is fundamental, as all iterations $\bx^t$ can be 
written as multinomials on the entries of the matrix $W$ and the initial point's coordinates $x^0_i(s)$. This makes 
the analysis purely combinatorial. At the first and second iterations $t=1,2$, and given simple polynomial activation functions 
$f_r(u, \ell, 1) = f_r(u, \ell, 0) = u(1)^{m}$, one can write
\begin{align*}
    & x_i^{1}(r) = \sum_{\ell \in [n]} W_{i\ell}x^0_\ell(1)^{m}, \\
    & x_i^{2}(r) = \sum_{\ell , \ell_1, \cdots, \ell_m\in [n]} W_{i\ell}W_{i\ell_1}\cdots W_{i\ell_m} \left(x_{\ell_1}^0(1)\cdots x_{\ell_m}^0(1)\right)^m - \{\text{Onsager}\}.
\end{align*}
We already notice that by the second iteration $t=2$, the exact expression for $x_i^2$ as a multinomial expansion in terms of the entries of
matrix $W$ becomes increasingly complex. We hence need to find an alternative indexation scheme for 
the summation above, properly suited to extract the desired information and establish Theorem~\ref{thm:amp-vec}. We follow the combinatorial approach initiated in \cite{Bayati_2015}. This approach is based on the introduction of ``non-backtracking" trees associated to ``non-backtracking" iterations.

\subsection{Strategy of proof}

To prove that the AMP iterations have the simple deterministic equivalent described in Theorem~\ref{thm:amp-vec} we first approximate the moments of $\bs{x}^t \in \RR^{n\times q}$ with the moments of simpler objects $\bs{z}^t$ called the ``non-backtracking'' iterations, these are generated with the same matrix $W$ used in the recursion~\eqref{def:AMPZ}, with a slightly different recursion scheme where the Onsager term is removed.
$$ \EE(\bs{x}^t_i)^\bm \approx \EE(\bs{z}^t_i)^\bm, \quad \forall \bm \in \NN^q,$$
this is done in (Proposition~\ref{prop:xtzt}) section~\ref{subsec:xtzt}. We then show a universality property of the iterations $\bs{z}^t$ in (Proposition~\ref{prop:ztzttilde}) section~\ref{subsec:zt}. More specifically, we show that if $\bs{\tilde{z}}^t$ is another non-backtracking iteration sequence generated using another matrix $\tilde{W}$ satisfying the same assumptions as $W$ but does not have the same distribution, then 
$$ \EE(\bs{z}^t_i)^\bm \approx \EE(\bs{\tilde{z}}^t_i)^\bm,\quad \forall \bm \in \NN^q . $$
This means that we can reduce our problem to an AMP constructed using a Gaussian matrix. Hence, without loss of generality we can suppose that $W$ is Gaussian. Moreover, we approximate the non-backtracking iterations $\bs{z}^t$ with another non-backtracking iterations $\bs{y}^t$, but this time, in the recursion formula of $\bs{y}^t$, at each step $t$ we independentally pick a new random matrix $W^t\eqlaw W$ which is Gaussian,
$$ \EE(\bs{\tilde{z}}^t_i)^\bm \approx \EE(\bs{y}^t_i)^\bm, \quad \forall \bm \in \NN^q . $$
this is done in (Proposition~\ref{prop:ztyt}) section~\ref{subsec:ztyt}. $\bs{x}^t$ is now reduced to its simplest form $\bs{y}^t$. Finally, we show in (Proposition~\ref{prop:ytut}) section~\ref{subsec:xtut} that 
$$ \EE(\bs{y}^t_i)^\bm \approx \EE(U^t_i)^\bm, \quad \forall \bm \in \NN^q . $$
which is relatively easy given that $\bs{y}^t$ are Gaussian. This finishes the proof of Theorem~\ref{thm:amp-vec}.

The proof of all these steps follows the combinatorial approach described in both \cite{Bayati_2015} and \cite{HACHEM2024104276} and thus we begin by presenting the framework of ``non-backtracking'' trees in section~\ref{subsec:treeStructure}. Notice that that the key difference between prior research and our approach is that the matrix $W$ is no longer symmetric, and exhibits some correlations between its entries.

\subsection{Description of the tree structure}
\label{subsec:treeStructure}

The proof of Theorem~\ref{thm:amp-vec} follows a combinatorial approach which aims
at studying the moments of the AMP iterates. In order to simplify the expression of 
these moments, we use \textit{planted and labeled trees} to index the sums in these 
expressions. We first define \textit{planted trees} and then describe its labeling.
\begin{definition}[Planted trees] 
We recall the following definition from graph theory.
\begin{itemize}
    \item A rooted tree $T = (V(T), E(T))$ at $\circ \in V(T)$, where $V(T)$ and $E(T)$ denote 
    respectively the set of vertices and edges, is said to be panted if the root $\circ$ has degree $1$.
    \item We consider that all the edges are oriented towards the root, we say that $v\in V(T)$ is the 
    parent of $u$ if the edge $(u\to v)$ is in $E(T)$, in this case, we use the notation $\pi(u) = v$, 
    we also say that $u$ is a child of $v$.
    \item We denote by $L(T)$ the set of leaves of $T$, i.e. vertices $v\in V(T)$ with no children.
    \item Given a vertex $v\in V(T)$, we denote by $|v|$ its distance to the root $\circ$.
    \item Finally, we define a \textit{path} starting at $v_1$ and ending at $v_k$ as a sequence of 
    vertices $(v_1, v_2, \cdots, v_k)$ such that $v_i = \pi(v_{i+1})$ for all $i\in [k-1]$.
\end{itemize}    
\end{definition}
We fix a integer $d,t \in \NN$, throughout this proof we consider the class of planted trees $(T, \circ)$
of depth at most $t$ such that for each vertex $v$, $v$ can have at most $d$ children.

We denote $$\NN_{\le d}^{\,q} := \{(a_1, \cdots, a_q) \in \NN^q\,,\  a_1+\cdots+a_q \leq d\}\ ,
$$
where $q$ is also a fixed integer.
\begin{definition}[Labeled and planted trees] We now describe the labeling of the trees.
A labeling of a tree $T$, is a triplet of functions $(\ell, r, c)$ such that 
$$ \ell : V(T) \to [n],  \quad r : V(T)\setminus \{\circ\} \to [q], \quad c : L(T)\to \NN_{\le d}^{\, q}.$$
\begin{itemize}
    \item For each vertex $u\in V(T)$, $\ell(u)$ is called the \textit{type} of $u$.
    \item For each vertex $u\in V(T)$ except the root, $r(u)$ is called the \textit{mark} of $u$.
    \item For each vertex $u\in V(T)$ which is not a leaf, we denote by $u[i]$ the number of 
    children of $u$ that have mark $i\in [q]$. We use the same notation to describe $c(u)$ for 
    $u\in L(T)$; $c(u) = \left(u[1], \cdots, u[q]\right) \in \NN_{\le d}^{\, q}$. In what follows, this notation is 
    used instead of $c(u)$.
    \item For a non-maximal leaf $u\in L(T)$, i.e. such that $|u|$ is less than the depth of $T$, 
    we set $u[1]=\cdots u[q] = 0.$
\end{itemize}
We denote by $\overline\cT^t$ the set of planted and labeled trees, with depth $t$ at most.
\end{definition}

\subsubsection*{Non-backtracking trees} One class of planted and labeled trees that is particularly adapted
to our specific study, is the class of trees satisfying the \textit{non-backtracking} condition, we recall
here the definition that can be found in \cite{Bayati_2015}. A non-backtracking tree is a planted and labeled 
tree $T$ such that for each path $(u_1=\circ, u_2 \cdots, u_k)$ in $T$ the types 
$(\ell(u_i), \ell(u_{i+1}), \ell(u_{i+2}))$ are distinct for each $i\in [k-2]$. We denote the class of these 
trees as $\cT^t$. In addition, we introduce the following classes of trees, for given integers $i, j$ and $r$,
we denote by,
\begin{itemize}
    \item $\cT^t_{i\to j}(r) \subset \cT^t$ the subset of trees in $\cT^t$ for
          which the type of the root is $i$, the type of the child $v$ of the root
          satisfies $\ell(v) \not\in \{ i, j \}$, and the mark of $v$ is $r(v) = r$.
    \item $\cT^t_{i}(r) \subset \cT^t$ the subset of trees in $\cT^t$ for
          which the type of the root is $i$, the type of the child $v$ of the root
          satisfies $\ell(v) \neq i$, and the mark of $v$ is $r(v) = r$.
\end{itemize}

We can already use these trees to create the following objects. For a matrix $W\in \RR^{n\times n}$, 
a vector $x\in \RR^n$ and a family of real numbers $\balpha = \left\{\alpha_{\iota} (r, \ell, s) \mid \iota \in \NN_{\le d}^{\, q}, (r, \ell, s)\in [q]\times [n] \times [t]\right\}$, we define,
\begin{align*}
    W(T)                  & := \prod_{(u\to v) \in E(T)} W_{\ell(v) \ell(u)}\ , \\
    \Gamma(T, \balpha, t) & := \prod_{(u\to v) \in E(T)}
    \alpha_{u[1],\ldots,u[q]}\left(r(u), \ell(u), t - |u|\right)\ ,                       \\
    x(T)                  & := \prod_{v\in L(T)} \prod_{s\in[q]}
    \left( x_{\ell(v)}(s)\right)^{v[s]}\ .
\end{align*}

To better illustrate the concepts previously defined, we present a simple example of a tree and 
demonstrate how it indexes the tree quantities $W$, $\Gamma$, and $x$.

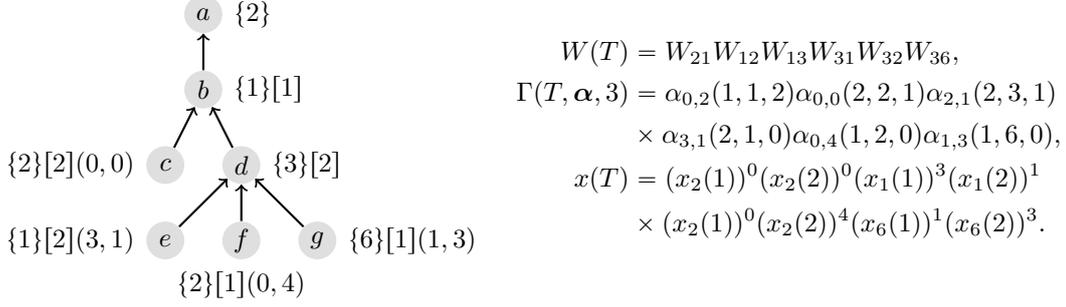
\begin{figure}[ht]
\begin{minipage}{0.45\textwidth}
    \begin{tikzpicture}
  [level distance=10mm,
   every node/.style={fill=gray!25,circle,inner sep=1pt, minimum size=0.5cm},
   level 1/.style={sibling distance=20mm,nodes={fill=gray!25}},
   level 2/.style={sibling distance=10mm,nodes={fill=gray!25}},
   level 3/.style={sibling distance=10mm,nodes={fill=gray!25}},
   edge from parent/.style={draw, <-, thick},  
   ]
   
  \node[fill=gray!25,circle,inner sep=1pt] (node0) {$a$}
     child {node (node1) {$b$}
     child {node (node2) {$c$}}
     child {node (node3) {$d$}
        child {node (node4) {$e$}}
        child {node (node5) {$f$}}
        child {node (node6) {$g$}}
        }
     };
     
    \begin{scope}[every node/.style={}] 
    \node[right=0.25mm of node0] {$\{2\}$};
    \node[right=0.25mm of node1] {$\{1\} [1]$};
    \node[left=0.25mm of node2] {$ \{2\} [2] (0, 0)$};
    \node[right=0.25mm of node3] {$\{3\}[2]$};
    \node[left=0.25mm of node4] {$\{1\} [2] (3, 1)$};
    \node[below=0.25mm of node5] {$\{2\} [1] (0, 4)$};
    \node[right=0.25mm of node6] {$\{6\} [1] ( 1, 3)$};
  \end{scope}
  
\end{tikzpicture}
\end{minipage}
\begin{minipage}{0.45\textwidth}
\begin{align*}
    W(T) &= W_{21} W_{12} W_{13} W_{31} W_{32} W_{36},\\
    \Gamma(T, \bs{\alpha}, 3) &= \alpha_{0,2}(1,1,2)\alpha_{0,0}(2,2,1)\alpha_{2,1}(2,3,1)\\
    &\times\alpha_{3,1}(2,1,0)\alpha_{0,4}(1,2,0)\alpha_{1,3}(1,6,0),\\
    x(T) &= (x_2(1))^0(x_2(2))^0(x_1(1))^3(x_1(2))^1\\
    &\times (x_2(1))^0(x_2(2))^4(x_6(1))^1(x_6(2))^3.\\
\end{align*}
\end{minipage}
\caption{Example of a tree $T\in\overline\cT^3$ for parameters $q=2$, $d=4$, $t=3$ and $n=6$. The types are written between braces, the marks are between brackets and leafs info is between parentheses. In this example, $T$ is not a non-backtracking tree because of the two paths $(a\leftarrow b \leftarrow c)$ and $(b\leftarrow d \leftarrow e)$.}
\end{figure}

\subsection{Non-backtracking iterations}
\label{subsec:zt}

The non-backtracking iterations $(\bz^t)_t$, are defined recursively similarly to $(\bx^t)_t$ but minus the Onsager term and with a slight change in the contributing terms from the previous iteration. Recall that the purpose of having the Onsager term is to eliminate components that induce non-Gaussian behavior in the iterates in the high dimensional regime. Basically, non-backtracking iterations evolve purposefully getting rid of parts that are source non-Gaussian behavior. In particular we do not need to have a corrective term.\\

Given any $i,j\in
    [n]$ with $i\neq j$, we initialize the non-backtracking sequence with $\bz^0_{i\to j} := \bcx^0_i$. We then define recursively $\bz^{t+1}_{i\to j}$ using the previous iterations as follows 
\begin{equation}
    \label{zij(r)}
    z^{t+1}_{i\to j}(r) = \sum_{\ell\in[n]\setminus\{j\}}
    W_{i\ell} f_r(\bz^t_{\ell\to i}, \ell, t), \quad \forall r \in [q],
\end{equation}
the case $l= i$ is excluded because $W_{ii} = 0$.
In addition, we also define the vectors $(\bz^t)_t$ by
\begin{equation}
    \label{zi(r)}
    z^{t+1}_{i}(r) = \sum_{\ell\in[n]} W_{i\ell} f_r(\bz^t_{\ell\to i}, \ell, t), \quad \forall r \in [q].
\end{equation}

We provide here a non-recursive formulation of $z_{i\rightarrow j}^t$ and $z_i^t$ described as sums
indexed by trees in $\cT^t_{i\to j}(r)$ and $ \cT^t_{i}(r)$.
\begin{lemma}[Lemma 1 of \cite{Bayati_2015}]
    \label{ztree}
    For all integers $t\in \NN$, $i,j\in [n]$ and $r\in [q]$, we have,
    \begin{align}
        z^t_{i\to j}(r) & = \sum_{T\in \cT^t_{i\to j}(r)} W(T) \Gamma(T,\balpha, t)
        x(T), \nonumber                                                               \\
        z^t_{i}(r)      & = \sum_{T\in \cT^t_{i}(r)} W(T) \Gamma(T,\balpha, t) x(T) .
        \nonumber
    \end{align}
    Here $x(T) := \bs{x}^0(T)$, we drop the superscript from this notation.
\end{lemma}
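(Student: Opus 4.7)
This lemma reformulates Lemma~1 of~\cite{Bayati_2015} in our non-symmetric, correlated setting. The combinatorial content is insensitive to these modifications, since the symmetry of $W$ is never used in the tree expansion itself. I would therefore prove both identities simultaneously by induction on $t$, treating them in parallel because their proofs differ only by whether the outer summation index $\ell$ is restricted.

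For the base case $t=1$, expanding the polynomial $f_r(\bcx^0_\ell,\ell,0)$ directly gives
$$z^1_{i\to j}(r) \;=\; \sum_{\ell\neq j}\sum_{\bs{m}\in\NN_{\le d}^{\,q}} W_{i\ell}\,\alpha_{\bs{m}}(r,\ell,0)\prod_{s\in[q]}\bigl(x^0_\ell(s)\bigr)^{m_s},$$
which matches the sum over $\cT^1_{i\to j}(r)$: each term corresponds to the unique (leaf) child $v$ of the root, with $\ell(v)=\ell\neq i,j$ (the exclusion of $\ell=i$ being automatic since $W_{ii}=0$ under A-\ref{ass:diag-nulle}), $r(v)=r$, and leaf data $v[s]=m_s$.

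For the inductive step I would apply~\eqref{zij(r)}, expand each polynomial $f_r$ as
$$f_r\bigl(\bz^t_{\ell\to i},\ell,t\bigr)\;=\;\sum_{\bs{m}\in\NN_{\le d}^{\,q}}\alpha_{\bs{m}}(r,\ell,t)\prod_{s\in[q]}\bigl(z^t_{\ell\to i}(s)\bigr)^{m_s},$$
and plug in the inductive representation of each $z^t_{\ell\to i}(s)$. Expanding the products turns this into a sum indexed by data $(\ell,\bs{m},(T_{s,p})_{s\in[q],\,p\leq m_s})$ with $\ell\neq j$, $\bs{m}\in\NN_{\le d}^{\,q}$, and each $T_{s,p}\in\cT^t_{\ell\to i}(s)$. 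Such a datum bijects with a tree $T'\in\cT^{t+1}_{i\to j}(r)$ obtained by planting a root $\circ$ of type $i$, attaching a unique child $v$ of type $\ell$ carrying mark $r$, and grafting each $T_{s,p}$ below $v$ by identifying the root of $T_{s,p}$ (which has type $\ell$) with $v$, so that its child becomes a child of $v$ in $T'$ carrying mark $s$. Under this bijection the new edge $(v\to\circ)$ contributes $W_{i\ell}$ to $W(T')$ and $\alpha_{v[1],\ldots,v[q]}(r,\ell,t)=\alpha_{\bs{m}}(r,\ell,t)$ to $\Gamma(T',\balpha,t+1)$ (since $v[s]=m_s$), while $x(T')$ and all remaining edge factors come unchanged from the subtrees $T_{s,p}$ (the depth shift $|u|\mapsto|u|+1$ is compensated exactly by the time shift $t\mapsto t+1$ in the arguments of $\alpha$).

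It remains to check that the non-backtracking property is preserved and that the enumeration is exhaustive. At the top, the restriction $\ell\neq j$ ensures the path of types $(i,\ell,\cdot)$ consists of pairwise distinct entries, using $\ell\neq i$ (via $W_{ii}=0$) and the fact that each root-child of $T_{s,p}\in\cT^t_{\ell\to i}(s)$ has type outside $\{\ell,i\}$; non-backtracking at deeper levels is inherited from the $T_{s,p}$. Conversely, every $T'\in\cT^{t+1}_{i\to j}(r)$ admits such a decomposition, proving exhaustiveness. The identity for $z^t_i(r)$ is proved in the same way, the only difference being that the outer summation runs over all $\ell\in[n]$ (again with $\ell=i$ vanishing), yielding trees in $\cT^t_i(r)$. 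The main bookkeeping item is the translation between polynomial powers and unordered-children subtrees; this is handled exactly as in~\cite{Bayati_2015}, by fixing an ordered-children convention on $\cT^t$ or by absorbing the multinomial factors into $\Gamma$, and introduces no new substantive difficulty beyond what is already present in the symmetric case.
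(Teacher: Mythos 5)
Your proof is correct and is essentially a reconstruction of the standard induction from Lemma~1 of Bayati et al., which the paper itself does not reprove but simply cites with the remark that the identity is purely structural and unaffected by the non-symmetric, correlated setting. Your argument—base case expansion, inductive grafting bijection $(\ell,\bs{m},(T_{s,p}))\mapsto T'$, verification of the non-backtracking and depth/time-shift compatibility, and the acknowledged ordered-children/multinomial bookkeeping—matches the approach the paper implicitly relies on, and correctly identifies that $W_{ii}=0$ under A-\ref{ass:diag-nulle} is what makes the $\ell=i$ term in the unrestricted sum vanish so that the index set coincides with $\cT^t_i(r)$.
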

Note that this lemma is purely structural, the proof is not impacted by our specific
variance and correlation profiles.

To simplify the notations in the following proofs we introduce the following sets,
\begin{equation}
\label{def:calKcalC}
    \cK = \left\{ (i,j)\in [n]\times [n]\,,\  \ s_{ij} > 0 \right\} \quad \text{and} \quad \mathcal{C} = \left\{ (i,j)\in [n]\times [n]\,,\  \ \tau_{ij} \neq 0 \right\}.
\end{equation}
We also define the row and column sections of $\cK$,
\begin{equation}
\label{def:calKcalC2}
     \cK_i = \left\{ j \in [n]\,,\  \ s_{ij} > 0 \right\} \quad \text{and} \quad \cK^j = \left\{ i \in [n]\,,\ \ s_{ij} > 0 \right\}.
\end{equation}

The next proposition shows that in the large dimensional regime, the
moments of a vector $\bz^t_i$ issued from the non-backtracking
iterations depend for large $n$ only on the first two moments of the elements
of $W$.
\begin{proposition}
    [adaptation of Proposition 1 of \cite{Bayati_2015}]
    \label{z-tz}
    \label{prop:ztzttilde}
    Let $\tX$ be a random matrix satisfying A-\ref{ass:X}, with distribution not
    necessarily identical to its analogue $X$. Assume that $W$ fulfills A-\ref{ass:S}. Let $\tW$ be the
    matrix constructed similarly to $W$, but with the $X_{ij}$ replaced with the
    $\tX_{ij}$.
    Starting with the set of $\RR^q$--valued vectors $\{ \tbz^0_{i\to j}, \ i,j \in
        [n], \ i\neq j \}$ given as $\tbz^0_{i\to j} = \bcx^0_i$, define the vectors
    $\tbz_i^t \in \RR^q$ by the recursion~\eqref{zij(r)} and the
    equation~\eqref{zi(r)}, where $W$ is replaced with $\tW$.  Then, for each
    $t\geq 1$ and each $\bm \in \NN^q$,
    \begin{equation*}
        \left| \EE (\bz^t_i)^\bm - \EE (\tbz^t_i)^\bm \right| = \mathcal{O}\left(\frac{1}{\sqrt{K_n}}\right).
    \end{equation*}

\end{proposition}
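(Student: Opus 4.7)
The proposition is a universality statement that survives because all the pair moments of $W$ and $\tW$ coincide. The starting point is the tree expansion of Lemma~\ref{ztree}:
$$
\EE(\bz_i^t)^{\bs m} \;=\; \sum_{r_1,\ldots,r_{|\bs m|}\in[q]} \sum_{T_1,\ldots,T_{|\bs m|}} \EE\!\left[\prod_{k} W(T_k)\right] \prod_k \Gamma(T_k,\balpha,t)\, x(T_k),
$$
with an analogous identity for $\tbz_i^t$ in terms of $\tW$, where the factors $\Gamma$ and $x$ do not depend on the matrix. The proof therefore reduces to comparing $\EE\prod_k W(T_k)$ with $\EE\prod_k \tW(T_k)$ for every tree tuple $(T_1,\ldots,T_{|\bs m|})$ rooted at $i$.

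\textbf{Reduction to ``bad'' configurations.} View the $|\bs m|$-tuple of trees as a forest and list the edges, each carrying an ordered pair $(a,b)\in[n]^2$ of endpoint types encoding the factor $W_{ba}$. Group the edges into equivalence classes: two edges belong to the same class if they share the same unordered pair of types $\{a,b\}$; in each class one further records how many occurrences are in orientation $(a,b)$ and how many in $(b,a)$. Because of Definition~\ref{def:corr_matrix}, the random variables attached to distinct classes are independent (and class labels are distinct due to A-\ref{ass:diag-nulle}, so $a\neq b$), hence
$$
\EE\!\left[\prod_k W(T_k)\right] \;=\; \prod_{\text{classes }\{a,b\}} \EE\!\left[W_{ab}^{n_1}\, W_{ba}^{n_2}\right].
$$
If every class satisfies $n_1+n_2=2$, each factor equals one of $s_{ab}$, $s_{ba}$ or $V_{ab}$; these depend only on $S$ and $T$, and hence agree for $W$ and $\tW$. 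Consequently the difference $\EE(\bz_i^t)^{\bs m}-\EE(\tbz_i^t)^{\bs m}$ only receives contributions from forest configurations in which at least one class has total multiplicity $n_1+n_2\geq 3$ (and, of course, every class has multiplicity $\geq 2$, since otherwise the expectation vanishes by centering).

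\textbf{Quantitative bound.} Call such configurations \emph{bad}. Each bad configuration contributes at most a term bounded, in modulus, by a product of factors of the form $\sqrt{s_{a_k b_k}s_{b_k a_k}}^{\,n_k}$ controlled via A-\ref{ass:X}, A-\ref{ass:S} and Cauchy--Schwarz. The variance profile gives $s_{ab}\leq C_S/K_n$ while each free type summation ranges over at most $C_{\text{card}}K_n$ indices (A-\ref{ass:S}). The non-backtracking structure further restricts the admissible labelings. A careful accounting, following the scheme of~\cite{Bayati_2015} but adapted here because edges are ordered and pair classes may mix the two orientations, shows that introducing a single class of multiplicity $\geq 3$ (instead of $=2$) saves at least one factor of $K_n^{-1/2}$ in the balance between the ``loss'' from the extra $s_{ab}$ factors and the ``gain'' from the number of admissible labelings. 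Summing over all bad forests, whose number is bounded independently of $n$ thanks to the fixed depth $t$ and bounded degree $d$, and using A-\ref{ass:X} to bound the entries of $\Gamma$ and $x$ uniformly (these are themselves controlled via \eqref{alpha-vec} and the boundedness of $\bs{\cx}^0$), one obtains the desired $\mathcal O(K_n^{-1/2})$ bound.

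\textbf{Main obstacle.} The principal difficulty is the combinatorial accounting in the third step once the symmetric Wigner setting of~\cite{Bayati_2015} is abandoned. Edges now carry orientations and two oriented edges can ``pair'' either through the $s_{ab}$ factor (same orientation) or through the $V_{ab}$ factor (reversed orientation). One must therefore enumerate edge matchings in this richer pair structure while still showing that any deviation from ``all classes of size exactly two'' loses a factor $K_n^{-1/2}$. The correlation profile $T$ plays no asymptotic role in the counting because $|V_{ab}|\leq \sqrt{s_{ab}s_{ba}}$, which is precisely the same order as a standard pair, but it has to be tracked carefully to ensure that the dominant ``paired'' contribution cancels in the comparison between $W$ and $\tW$. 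The remainder of the argument (bounding moments of $\Gamma$ and $x$, truncating high-degree terms, and collecting the bounds over the fixed-depth forest) proceeds as in~\cite{HACHEM2024104276}.
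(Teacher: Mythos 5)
Your proposal takes essentially the same route as the paper: expand the moment via Lemma~\ref{ztree}, observe that forest configurations in which every type-pair $\{a,b\}$ is covered exactly twice contribute identically to $W$ and $\tW$ (yielding $s_{ab}$, $s_{ba}$ or $V_{ab}=\sqrt{s_{ab}s_{ba}}\tau_{ab}$, all functions of $S$ and $T$ only), and bound the ``bad'' configurations where some pair is covered at least three times by playing the per-configuration moment bound $C K_n^{-\mu/2}$ against the count of admissible labelings. One phrase is imprecise and could mislead: the number of bad \emph{labeled} tree tuples is not bounded independently of $n$; it grows like $K_n^{(\mu-1)/2}$ (the merged graph then has at most $(\mu-1)/2$ non-root vertices, each admitting $O(K_n)$ labels by A-\ref{ass:S}). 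What is bounded independently of $n$ is only the collection of \emph{unlabeled} merged-graph shapes. The final $O(K_n^{-1/2})$ comes from the exponent mismatch $(\mu-1)/2-\mu/2=-1/2$, not from a bounded number of terms each of order $K_n^{-1/2}$; your ``balance/gain'' sentence gestures at this correctly, but the subsequent claim of a bounded forest count should be restated in these terms.
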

\begin{proof}
    Without loss of generality, we restrict the proof to the case where the multi-index $\bm$
    satisfies
    $$ \bm(s) =\left\{
    \begin{array}{ll}
          0 & \text{if} \ s\neq r, \\
          m & \text{if} \ s=r,
    \end{array}
    \right.$$
    for some integer $m > 0$. By Lemma~\ref{ztree}, we have
    \[
        \EE (z^t_i(r))^m = \sum_{T_1,\ldots, T_m \in \cT^t_{i}(r)}
        \left( \prod_{k=1}^m \Gamma(T_k,\balpha, t) \right)
        \EE \left[ \prod_{k=1}^m W(T_k) \right]
        \prod_{k=1}^m x(T_k) .
    \]
    For a tree $T$ and $j,\ell \in [n]$, define
    \[
        \vec{\varphi}_{\ell j}(T) = \left| \left\{ (u\to v) \in E(T), \ ( \ell(u),\ell(v) ) =
        (j, \ell) \right\} \right|.
    \]
    Based on the definition of $W(T)$,  $\vec{\varphi}_{\ell j}(T)$ counts the number of edges in the tree $T$ that represent the $(\ell,j)$ matrix entry $W_{\ell j}$. We also define $\varphi_{j\ell}$ for $j < \ell$ as

    \[
        \varphi_{j\ell}(T) = \vec{\varphi}_{j\ell}(T)+ \vec{\varphi}_{\ell j}(T),
    \]
    this quantity represents the total number of edges in the tree $T$ that represent either $W_{j\ell}$ or $W_{\ell j}$.
    We know that there is an integer constant $C_E = C_E(d,t,m)$ that bounds the total number
    of edges in the trees $T_1,\ldots, T_m \in \cT^t_{i}(r)$, thus
    \[
        \sum_{k\in[m]} \sum_{j<\ell} \varphi_{j\ell}(T_k) \leq C_E = m \frac{d^t - 1}{d - 1} .
    \]
    $C_E$ is simply the maximum number of edges in the $m$-tuple of trees $T_1, \cdots, T_m$.
    Given an integer $\mu \in [C_E]$, recall that $\cK$ is introduced in \eqref{def:calKcalC}, define
    \begin{align*}
        \cA_i(\mu) := \Bigl\{ & (T_1,\ldots, T_m)\,,\  T_k \in \cT^t_{i}(r) \
        \text{for all} \ k \in [m] ,                                                                               \\
                             & \forall j < \ell, \ \sum_{k\in[m]} \varphi_{j\ell}(T_k) \neq 1 ,                          \\
                             & \forall j , \ell, \ \sum_{k\in[m]} \vec{\varphi}_{j\ell}(T_k) > 0 \ \Rightarrow (j,l) \in
        \cK,                                                                                                       \\
                             & \sum_{k\in[m]} \sum_{j<\ell} \varphi_{j\ell}(T_k) = \mu \Bigr\}.
    \end{align*}
    Since the elements of $W$ beneath the diagonal are centered and
    independent, then,
    \begin{equation}
        \label{ztm}
        \EE (z^t_i(r))^m = \sum_{\mu=1}^{C_E} \sum_{(T_1,\ldots, T_m) \in \cA_i(\mu)}
        \left( \prod_{k=1}^m \Gamma(T_k,\balpha, t) \right)
        \left( \prod_{k=1}^m x(T_k)  \right)
        \EE \left[ \prod_{k=1}^m W(T_k) \right] .
    \end{equation}
    Notice that the contributions of the $m$--uples of trees in the set
    \begin{equation*}
        \left\{ (T_1, \cdots, T_m) \in \cA_i(\mu) \,,\ \forall j < \ell, \
        \sum_{k\in[m]} \varphi(T_k)_{j\ell} \in \{ 0, 2 \} \right\}\ ,
    \end{equation*}
    are the same for $\EE (z^t_i(r))^m$ and $\EE (\tilde z^t_i(r))^m$ by the
    assumptions on the matrices $W$ and $\tW$. Three cases can be considered for
    a couple of indices $(j,\ell)$ where $j<\ell$ and $\sum_{k\in[m]} \vec{\varphi}(T_k)_{j\ell} = 2 $,
    \begin{itemize}
        \item $W_{j\ell}$ is represented two times in the trees $\Rightarrow$ contribution equal to $s_{j\ell}$,
        \item $W_{\ell j}$ is represented two times in the trees $\Rightarrow$ contribution equal to $s_{\ell j}$,
        \item $W_{j\ell}$ and $W_{\ell j}$ are both represented in the trees $\Rightarrow$ contribution equal to $\sqrt{s_{j\ell} s_{\ell j}}\tau_{j\ell}$.
    \end{itemize}
    Notice that in all three cases the contributions do not depend on the distributions of the entries of the matrix $W$ but only on the first and second moments.
    Thus, defining the set
    \begin{equation}
    \label{def:Acheck}
        \widecheck\cA_i(\mu) = \Bigl\{ (T_1,\ldots, T_m) \in \cA_i(\mu) \,,\ 
        \exists j < \ell, \ \sum_{k\in[m]} \vec{\varphi}(T_k)_{j\ell} \geq 3 \Bigr\},
    \end{equation}
    the proposition can be proven if we prove that for all $\mu\in [C_E]$, the
    real number
    \begin{equation*}
        \xi_{\mu} =
        \sum_{(T_1,\ldots, T_m) \in \widecheck\cA_i({\mu})}
        \left( \prod_{k=1}^m \Gamma(T_k,\balpha, t) \right)
        \left( \prod_{k=1}^m x(T_k)  \right)
        \EE \left[ \prod_{k=1}^m W(T_k) \right]
    \end{equation*}
    satisfies $$|\xi_{\mu} | =\mathcal{O}\left(\frac{1}{\sqrt{K_n}}\right). $$

    Using the bounds \eqref{alpha-vec} provided in the statement of Theorem~\ref{thm:amp-vec}, it is clear that
    $\prod_{k=1}^m \Gamma(T_k,\balpha, t)$ and $\prod_{k=1}^m x(T_k)$ are bounded as $n$ goes to infinity.

    Since there exists a constant $C$ such that $| \EE W_{j\ell}^s | \leq C K_n^{-s/2}$ for each integer $s > 0$ by
    A-\ref{ass:X} and A-\ref{ass:S}, for each $(T_1,\ldots, T_m) \in
        \widecheck\cA_i({\mu})$, we have
    \begin{eqnarray*}
            \left| \EE \prod_{k=1}^m W(T_k) \right| &=&
            \prod_{j<\ell} \Bigl| \EE W_{j\ell}^{\sum_{k=1}^m \vec{\varphi}_{j\ell}(T_k)} W_{\ell j}^{\sum_{k=1}^m \vec{\varphi}_{\ell j}(T_k)} \Bigr| \ ,\\
            &\leq& \prod_{j<\ell} \Bigl| \left(\EE W_{j\ell}^{2\sum_{k=1}^m \vec{\varphi}_{j\ell}(T_k)}\right)^{1/2} \left(\EE W_{\ell j}^{2\sum_{k=1}^m \vec{\varphi}_{\ell j}(T_k)}\right)^{1/2}\ ,\\
            &\leq& CK_n^{-\frac{1}{2}\sum_{j<\ell}\sum_{k}\vec{\varphi}_{j\ell}(T_k) + \vec{\varphi}_{\ell j}(T_k)}
            \quad \leq \quad C K_n^{-{\mu} / 2}\ .
    \end{eqnarray*}
    To complete the proof, we shall show that
    \begin{equation}
        \label{x(T)}
        \left| \widecheck\cA_i({\mu}) \right| = \mathcal{O}\left(K_n^{\frac{\mu-1}{2}}\right)\ .
    \end{equation}

    Given an $m$--uple $(T_1,\ldots, T_m) \in \widecheck\cA_i({\mu})$ of trees, we
    construct a graph $G = \bG(T_1,\ldots, T_m)$ by identifying the types of the vertices
    in all these trees. The marks as well as the orientation of the edges are ignored. 
    $G$ is then a rooted and labeled graph whose root is the vertex obtained by merging
    the roots of the trees $T_1,\ldots,T_m$ (remember that they all have the same type $i$).
    
    \noindent The number of edges of $G$ is
    \begin{equation*}
        | E(G) | = \sum_{j<\ell} \1_{\sum_k \varphi(T_k)_{j\ell} > 0}.
    \end{equation*}
    Remember that when $\sum_k \varphi(T_k)_{j\ell} > 0$, this sum is greater than $2$, so
    $$\forall j<\ell, \quad \sum_k \varphi(T_k)_{j\ell}  \geq 2 \1_{\sum_k \varphi(T_k)_{j\ell} > 0},$$
    we also know that for some $j<\ell$ we have $\sum_k \varphi(T_k)_{j\ell}\geq 3$.
    Consequently,
    $$
    2 (| E(G) |-1) + 3 \quad \leq\quad  \sum_{j<\ell} \sum_{k=1}^m \varphi(T_k)_{j\ell}\ ,
    $$ thus,
    $$
        | E(G) | \leq \frac{{\mu} - 1}{2}\ .
   $$
    Note that since $G$ is connected, as being obtained through the merger of
    planted trees with the same root's type,
    $$| V(G) | \leq | E(G) | +1,$$ which gives $$| \{ v \in V(G)\,,\ v \neq \degree \} |
        \leq ({\mu} - 1) / 2.$$ 
        Also, by construction, $G$ satisfies the following property:
    $$
        (u\to v) \in E(G) \ \Rightarrow \ \ell(u) \in \cK_{\ell(v)},
    $$
    where $\cK_i$ is defined in \eqref{def:calKcalC2}. And by A-\ref{ass:S}, this implies that $G$ satisfies the following property: 
    for any fixed labeled vertex $v\in V(G)$ if $(u\to v)\in E(G)$ then $u$ can be labeled 
    by at most $C K_n$ different values.

    We shall denote as $\cG^{\mu}_i$ the set of rooted, undirected and labeled graphs $G$ such that
    \begin{itemize}
        \item  $G$ is connected,
        \item $\ell(\degree) = i$, $| E(G) | \leq ({\mu} -
                  1)/2$,
        \item for any fixed labeled vertex $v\in V(G)$ if $(u,v)\in E(G)$ then $u$ can be labeled by at most $C K_n$ different values.
    \end{itemize}

    We denote as $\cR^{\mu}$ the set of
    all the elements of $\cG^{\mu}_i$ but without the labels. Given a graph
    $G \in \cG^{\mu}_i$, let us denote as $\bar G = \bU(G) \in \cR^{\mu}$ the
    unlabeled version of $G$.  With these notations, we have
    \begin{equation}
        \label{xcheck}
        \left| \widecheck\cA_i({\mu}) \right| =
        \sum_{\bar G \in \cR^{\mu}} \
        \sum_{\substack{G \in \cG^{\mu}_i \, : \\\bU(G) = \bar G}} \
        \left| \left\{
        (T_1,\ldots, T_m) \in \widecheck\cA_i({\mu}) \,,\ 
        \bG(T_1,\ldots, T_m)= G \right\} \right| .
    \end{equation}
    For each graph $G$, it is clear that
    \begin{equation}
        \label{TG}
        \left| \left\{ (T_1,\ldots, T_m) \in \widecheck\cA_i({\mu}) \,,\ 
        \bG(T_1,\ldots, T_m)= G \right\} \right| \quad \leq\quad  C\,,
    \end{equation}
    where $C = C(d,t,m)$ is independent of $G$. Our goal now is to show that
    \begin{equation}
        \label{GKmu}
        \left| \left\{ G \in \cG^{\mu}_i \,,\ \bU(G) = \bar G \right\} \right|
        \leq C K_n^{({\mu}-1) / 2}\,,
    \end{equation}
    which is simply the number of all possible labelings of a graph $\bar G$ under the constraints described above. To see this, consider a breadth first search ordering of the vertices of the graph $v_0 = \degree< v_1< \cdots < v_{|V(\bar G)|-1}$ that begins at the root $\degree$, this ordering has the property of visiting each vertex once and that each new vertex is connected to an already visited vertex, i,e.
    \begin{itemize}
        \item $\{v_0 = \degree,v_1, \cdots, v_{|V(\bar G)|-1}\} = V(\bar G)$,
        \item $\forall j = 1, \cdots \ \exists k < j $ such that $(v_j\to v_k)\in E(\bar G)$.
    \end{itemize}
    Now, starting with $v_1$ and by induction, after fixing the label of $v_{j-1}$, one can 
    see that $v_j$ can only be labeled in at most $C K_n$ possible ways. So the number of all 
    possible labelings of $\bar G$ is bounded by $C K_n^{|V(\bar G)|-1} \leq C K_n^{(\mu-1)/2}$.

    Furthermore, it is easy to check that
    $$
        \left| \cR^{\mu} \right| \leq C .
    $$
    
    Getting back to equality~\eqref{xcheck}, and using this last inequality along
    with inequalities~\eqref{GKmu} and~\eqref{TG}, we obtain
    inequality~\eqref{x(T)}, and the proposition is proved.
\end{proof}

Notice that for a tuple of trees $(T_1,\cdots, T_m)$ satisfying the following condition
$$
    \forall j< \ell, \ \sum_{k\in [m]} \varphi_{j\ell}(T_k) \in \{0,2\},
$$
if there exists a pair $(j,\ell)$ such that $\sum_{k\in [m]} \vec{\varphi}_{j\ell}(T_k) = 1$ and $(j,\ell)\in \mathcal{C}$, i.e. $\tau_{j\ell}\neq 0$, then $\mathbb{E}\left[\prod_{k=1}^m W(T_k)\right] = 0$. Consider the following subset $\tilde{\mathcal{A}}_i(\mu)$ of $\mathcal{A}_i(\mu)$ defined
\begin{align}
\label{def:Atilde}
    \tilde{\cA_i}(\mu) = \Bigl\{ & (T_1,\ldots, T_m)\in \cA_i(\mu),                                                    \\
                                 & \forall j< \ell, \ \sum_{k\in [m]} \varphi_{j\ell}(T_k) \in \{0,2\},  \notag                    \\
                                 & \forall j , \ell, \ \sum_{k\in[m]} \vec{\varphi}_{j\ell}(T_k) = 1 \ \Rightarrow (j,\ell) \in
    \mathcal{C} \Bigr\}.\notag
\end{align}
If $(T_1, \cdots, T_m) \in \tilde{\cA_i}(\mu)$ then the graph $G = \bG(T_1, \cdots, T_m)$ constructed by merging the trees has exactly $\mu/2$ edges, and that can be seen by writing
\begin{equation*}
    \begin{split}
        \sum_{k=1}^m \varphi_{jl}(T_k) &= 2 \ 1_{\sum_{k=1}^m \varphi_{jl}(T_k) > 0} \, , \\
        |E(G)| &= \sum_{j<l} 1_{\sum_{k=1}^m \varphi_{jl}(T_k) > 0} =\sum_{j<l} \frac{1}{2}\sum_{k=1}^m \varphi_{jl}(T_k) = \mu /2.\\
    \end{split}
\end{equation*}
Define the set of graphs $\tilde{\cG}_i^\mu$ analogously  to $\cG_i^\mu$ with the difference that we replace the requirement $|E(G)|\leq (\mu-1)/2$ with $|E(G)|= \mu/2$. We can then write
\begin{equation}
    \EE z_i^t(r)^m = \sum_{\mu=1}^{C_E} \chi_\mu + \sum_{\mu=1}^{C_E} \xi_\mu,
\end{equation}
where
\begin{equation}\label{def:ximu}
    \chi_\mu  = \sum_{\bar G \in \cR^{\mu}}
    \sum_{\substack{G \in \tilde{\cG}_i^\mu \, : \\\bU(G) = \bar G}}
    \sum_{\substack{(T_1, \cdots, T_m)\in \tilde{\cA_i}(\mu) \, : \\ \bs{G}(T_1,\cdots, T_m) = G}}
    \left( \prod_{k=1}^m \Gamma(T_k,\balpha, t) \right)
    \left( \prod_{k=1}^m x(T_k)  \right)
    \EE \left[ \prod_{k=1}^m W(T_k) \right] .
\end{equation}
Recalling that $|\xi_\mu| = \mathcal O(K_n^{-1/2})$, we focus on the
$\chi_\mu$. To that end, we further decompose the first sum on the unlabeled
graphs $\bar{G}\in \cR^\mu$ above into a sum on the graphs which are trees and
a sum on the graphs which are not trees, i.e., those that contain a cycle. Let 
us denote respectively the corresponding sums by $\chi_\mu^{\text{T}}$ and
$\chi_\mu^{\text{NT}}$, and write 
$$
    \chi_\mu = \chi_\mu^{\text{T}} + \chi_\mu^{\text{NT}}.
$$
We  show in the following lemma that the contribution of the term $\chi_\mu^{\text{NT}}$ is negligible.
\begin{lemma} Consider the same framework as in Proposition \ref{prop:ztzttilde}.
   We have $$\chi_\mu^{\text{T}} =\mathcal{O}(1) \quad \text{and} \quad \chi_\mu^{\text{NT}} =\mathcal{O}\left(\frac{1}{K_n}\right).$$
\end{lemma}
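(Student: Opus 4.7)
The plan is to adapt the counting scheme used in the proof of Proposition~\ref{prop:ztzttilde}, exploiting the extra rigidity enforced by the constraint $(T_1,\dots,T_m)\in\tilde{\cA}_i(\mu)$, namely that the underlying graph $G=\bs{G}(T_1,\dots,T_m)$ has exactly $\mu/2$ edges. The decomposition of $\chi_\mu$ according to whether the unlabeled graph $\bar G$ is a tree or contains a cycle will naturally produce the two different orders of magnitude.

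First I would bound the contribution of a single labeled graph. For any $(T_1,\dots,T_m)\in\tilde{\cA}_i(\mu)$, the independence and moment bounds from A-\ref{ass:X}--A-\ref{ass:S} together with $s_{ij}\le C_S/K_n$ give
\[
\Bigl|\EE\prod_{k=1}^m W(T_k)\Bigr| \;=\; \prod_{j<\ell}\Bigl|\EE\, W_{j\ell}^{\sum_k\vec\varphi_{j\ell}(T_k)}W_{\ell j}^{\sum_k\vec\varphi_{\ell j}(T_k)}\Bigr| \;\le\; C\,K_n^{-\mu/2},
\]
using $\sum_{j<\ell}\sum_k \varphi_{j\ell}(T_k)=\mu$. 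The factors $\prod_k\Gamma(T_k,\balpha,t)$ and $\prod_k x(T_k)$ remain bounded thanks to~\eqref{alpha-vec}, and for each labeled graph $G$ the number of tuples $(T_1,\dots,T_m)$ with $\bs{G}(T_1,\dots,T_m)=G$ is bounded by a constant $C=C(d,t,m)$, as in~\eqref{TG}.

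Next I would count labelings of a fixed unlabeled graph $\bar G$ in the same spirit as the proof of~\eqref{GKmu}. Fix a breadth-first-search ordering $v_0=\circ,v_1,\dots,v_{|V(\bar G)|-1}$ starting at the root (whose label is fixed to $i$). By A-\ref{ass:S}, once the labels of all predecessors of $v_j$ have been fixed, there are at most $C_{\text{card}}K_n$ admissible labels for $v_j$ (its type must lie in some $\cK_\ell$ or $\cK^\ell$ for an already labeled neighbor $\ell$). Hence the number of labelings is at most $C\,K_n^{|V(\bar G)|-1}$. Since the number of unlabeled graphs in $\cR^\mu$ is bounded by a constant, the asymptotics are driven solely by $|V(\bar G)|$.

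Now I split according to whether $\bar G$ is a tree. If $\bar G$ is a tree with $|E(\bar G)|=\mu/2$, then $|V(\bar G)|-1=\mu/2$, so after summing over labelings and combining with the $K_n^{-\mu/2}$ moment bound, each term contributes $\mathcal O(1)$, yielding $\chi_\mu^{\text T}=\mathcal O(1)$. If $\bar G$ is not a tree, connectedness together with the presence of a cycle forces $|V(\bar G)|\le |E(\bar G)|=\mu/2$, hence $|V(\bar G)|-1\le \mu/2-1$; combining $K_n^{\mu/2-1}$ labelings with $K_n^{-\mu/2}$ yields $\chi_\mu^{\text{NT}}=\mathcal O(K_n^{-1})$. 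The only mildly delicate point is to observe that the constraints defining $\tilde{\cA}_i(\mu)$ (in particular $\sum_k \varphi_{j\ell}(T_k)\in\{0,2\}$) really do force $|E(G)|=\mu/2$ exactly, which makes the tree case saturate the dimension count and the cycle case lose exactly one factor of $K_n$.
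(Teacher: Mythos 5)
Your proof is correct and follows essentially the same route as the paper: reuse the moment bound $|\EE\prod_k W(T_k)|\le C K_n^{-\mu/2}$ from Proposition~\ref{prop:ztzttilde}, count labelings of $\bar G$ via a BFS argument as $CK_n^{|V(\bar G)|-1}$, and then split on whether $\bar G$ is a tree (where $|V(\bar G)|-1=|E(\bar G)|=\mu/2$ saturates the count) or has a cycle (where $|V(\bar G)|-1\le\mu/2-1$ loses a factor $K_n$). The paper states this more tersely by simply invoking "repeating the same argument as in Proposition~\ref{prop:ztzttilde}" and noting $|V(G)\setminus\{\circ\}|\le\mu/2$ with equality iff $G$ is a tree; your observation that membership in $\tilde\cA_i(\mu)$ forces $|E(G)|=\mu/2$ exactly is the same one the paper records just before the lemma.
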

\begin{proof}
    In the proof of Proposition~\ref{prop:ztzttilde}, we have already 
    got that $|\EE\left[\prod_{k=1}^m W(T_k)\right]|$ is bounded by $CK_n^{-\mu/2}$, 
    so we only need to study the quantity
    \[
        \left| \left\{ G \in \tilde{\cG}^{\mu}_i \, , \ \bU(G) = \bar G \right\} \right|,
    \]
    in the case where $\bar{G}$ is a tree and where $\bar{G}$ in not a tree. 
    Recall that for a given $G\in \tilde{\cG}^{\mu}_i$ the graph $G$ is connected 
    and we have $|E(G)| = \mu/2$ so $|V(G)\setminus \{\degree\}| \leq \mu/2$ with 
    the equality if and only if $G$ is a tree. So repeating the same argument as 
    in Proposition~\ref{prop:ztzttilde} we find that
   $$    
            \left| \left\{ G \in \tilde{\cG}^{\mu}_i \, , \ \bU(G) = \bar G \right\} \right|
            \leq C K_n^{{\mu} / 2}\qquad \textrm{and}\qquad 
            \left| \left\{ G \in \tilde{\cG}^{\mu}_i \, , \ \bU(G) = \bar G \right\} \right|
            \leq C K_n^{{\mu} / 2 - 1}\ ,
   $$ 
    in the case of $\bar{G}$ being a tree and not a tree respectively. 
    Multiplying by $C K_n^{-\mu/2}$ yields to the desired result.
\end{proof}

\subsection{Approximation of the non-backtracking iterations}
\label{subsec:ztyt}

For each $n$, let us now consider an i.i.d.~sequence
$(W^{t})_{t=0,1,\ldots}$ of $n\times n$ matrices such that
$W^{t} \eqlaw W$.  We define the vectors $\by^t_{i\to j}$ and
$\by^t_i$ recursively in $t$ similarly to what we did for the vectors
$\bz^t_{i\to j}$ and $\bz^t_i$, with the difference that we now replace the
matrix $W$ with the matrix $W^t$ at step $t$. More precisely, we set
$\by^0_{i\to j} = \bcx^0_i$ for each $i,j \in [n]$ with $i\neq j$. Given $\{
    \by^t_{i\to j}\mid \ i,j \in [n], \ i\neq j \}$, we set
\begin{equation}
\label{def:yij}
    y^{t+1}_{i\to j}(r) = \sum_{\ell\in[n]\setminus\{j\}}
    W_{i\ell}^t f_r(\by^t_{\ell\to i}, \ell, t) ,  \quad i \neq j.
\end{equation}
Also,
\begin{equation}
\label{def:yi}
    y^{t+1}_{i}(r) = \sum_{\ell\in[n]} W_{i\ell}^t f_r(\by^t_{\ell\to i}, \ell, t) .
\end{equation}
We introduce here a similar quantity to $W(T)$ for a given labeled tree which 
is adapted to the computations related to the iterations $y^{t}_i$. 
We define $\overline{W}(T,t)$ by
$$
    \overline{W}(T,t) = \prod_{(u\rightarrow v) \in E(T)} W^{t- |u|}_{\ell(v)\ell(u)},
$$
where we recall that $|u|$ denotes the distance of the vertex $u$ to the 
root $\degree$ in the tree $T$.

\noindent We can prove similar structural identities for $y^{t}_{i}$ and $y^{t}_{i\rightarrow j}$ 
as what we did with the iterates $z^{t}_{i}$ and $z^{t}_{i\rightarrow j}$. In fact, we have
\begin{align}
    y^t_{i\to j}(r) & = \sum_{T\in \cT^t_{i\to j}(r)} \overline{W}(T,t) \Gamma(T,\balpha, t)
    x(T), \nonumber                                                                            \\
    y^t_{i}(r)      & = \sum_{T\in \cT^t_{i}(r)} \overline{W}(T,t) \Gamma(T,\balpha, t) x(T) .
    \nonumber
\end{align}

\begin{proposition}
\label{prop:ztyt}
    Let $(\bz^t)$ and $(\bs{y}^t)$ two sequences defined in \eqref{zi(r)} and \eqref{def:yi} respectively, 
    then for each $t \geq 1$ and each $\bm \in \NN^q$, we have that for each $i\in [n]$,
    $$
        \left| \EE (\bz^t_i)^\bm - \EE (\bs{y}^t_i)^\bm \right| = \mathcal{O}\left(\frac{1}{\sqrt{K_n}}\right)  .
    $$
\end{proposition}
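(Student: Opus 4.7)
The approach is to expand both $\EE(\bz^t_i)^\bm$ and $\EE(\by^t_i)^\bm$ as sums over $m$-uples of non-backtracking trees (using Lemma~\ref{ztree} and its direct analogue for $\by^t$), then match the dominant contributions term-by-term and control the residuals at the scale $K_n^{-1/2}$. As in the proof of Proposition~\ref{prop:ztzttilde} I would first reduce to a multi-index of the form $\bm = m\, e_r$ by multilinear expansion. For $\bz^t$ the random factor inside each summand is $\EE[\prod_k W(T_k)]$, whereas for $\by^t$ it is $\EE[\prod_k \overline{W}(T_k,t)]$. Because the matrices $W^0,W^1,\ldots$ are independent, this second expectation factorises level-by-level, and only \emph{level-respecting} pairings of tree-edges (i.e.\ pairings of edges that share the depth $|u|$ in their respective trees) can contribute.

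Following the partitioning of the proof of Proposition~\ref{prop:ztzttilde}, I would split the $m$-uples into $\widecheck\cA_i(\mu)$ (some label-pair has global multiplicity $\ge 3$) and $\tilde\cA_i(\mu)$ (all global pair-multiplicities in $\{0,2\}$), and refine $\tilde\cA_i(\mu)$ according to whether the merge graph $G=\bG(T_1,\ldots,T_m)$ is a tree (giving $\chi_\mu^{\mathrm{T}}$) or contains a cycle (giving $\chi_\mu^{\mathrm{NT}}$). The contributions of $\widecheck\cA_i(\mu)$ are $\mathcal{O}(K_n^{-1/2})$ for the $W$-expansion by the argument already carried out; since the entries of each $W^s$ obey identical moment bounds and the combinatorial counting of labeled graphs is unchanged, the same estimate transfers verbatim to the $\overline{W}$-expansion. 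The analogous comparison shows $\chi_\mu^{\mathrm{NT}} = \mathcal{O}(K_n^{-1})$ in both cases. Hence the only potentially non-negligible discrepancy lies in the main term $\chi_\mu^{\mathrm{T}}$.

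The heart of the argument, and the main obstacle, is the following \emph{level-rigidity} statement: for every $(T_1,\ldots,T_m)\in\tilde\cA_i(\mu)$ whose merge graph $G$ is a tree rooted at label $i$, the two tree-edges of $\bigsqcup_k T_k$ covering each $G$-edge lie at the same level in their respective trees. To prove it, fix a tree-edge $(u\to v)$ in some $T_k$ and consider the path $\circ_k=p_0,p_1,\ldots,p_{|u|}=u$ joining the root of $T_k$ to $u$. The consecutive labels $\ell(p_0),\ldots,\ell(p_{|u|})$ form a walk in $G$ from $i$ to $\ell(u)$ (because each consecutive pair of labels corresponds to a tree-edge, hence to a $G$-edge); by the non-backtracking condition on $T_k$, this walk is non-backtracking in $G$ as well. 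Any non-backtracking walk in the tree $G$ coincides with the unique geodesic between its endpoints, so $|u| = d_G(i,\ell(u))$. The level of $(u\to v)$ therefore equals $\max(d_G(i,\ell(u)),d_G(i,\ell(v)))$, a quantity determined solely by the covered $G$-edge, so both tree-edges covering it must share this level.

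Level-rigidity ensures that for every tree-merge tuple in $\tilde\cA_i(\mu)$ all pairings are automatically level-respecting, and the level-wise factorisation of $\EE[\prod_k\overline{W}(T_k,t)]$ collapses to the very same product of two-point contributions $s_{j\ell}$, $s_{\ell j}$ or $\sqrt{s_{j\ell}s_{\ell j}}\,\tau_{j\ell}$ that appears in $\EE[\prod_k W(T_k)]$. The tree-merge main terms $\chi_\mu^{\mathrm{T}}$ therefore cancel exactly in the difference $\EE(\bz^t_i)^\bm - \EE(\by^t_i)^\bm$, and summing the surviving $\mathcal{O}(K_n^{-1/2})$ residuals over the finitely many values $\mu\le C_E$ yields the claimed bound. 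The decisive ingredient is the level-rigidity lemma; its validity hinges on the non-backtracking property, which is precisely the reason why the comparison is carried out on $\bz^t$ and $\by^t$ rather than on the AMP iterates $\bx^t$ themselves.
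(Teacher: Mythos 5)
Your proposal is correct and follows the same overall decomposition as the paper (reduce to $\bm = m\,e_r$, split into $\widecheck\cA_i(\mu)$ of order $K_n^{-1/2}$, then split $\tilde\cA_i(\mu)$ by whether the merge graph $G$ is a tree, with the cyclic part of order $K_n^{-1}$), but you establish the key tree-case cancellation by a genuinely different route. The paper argues by contraposition: assuming $\EE\prod_k \overline W(T_k,t)=0$ while $\EE\prod_k W(T_k)\neq 0$, it performs a three-way case analysis (same path, different trees, different branches of one tree) to exhibit a cycle in $G$. You instead prove a direct \emph{level-rigidity} lemma: since the root-to-vertex paths in each $T_k$ project to non-backtracking walks in $G$, and a non-backtracking walk in a tree is a geodesic, the depth $|u|$ of any tree vertex equals $d_G(i,\ell(u))$, so the level $t-|u|$ of each edge is forced by the covered $G$-edge alone. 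This is cleaner and more informative: it gives the identity $\EE\prod\overline W = \EE\prod W$ on tree-merge tuples positively, and as a byproduct it shows that the two edges covering a $G$-edge must share their orientation (opposite orientations would force $d_G(i,j)=d_G(i,\ell)+1$ and $d_G(i,\ell)=d_G(i,j)+1$ simultaneously), so the correlation factor $\sqrt{s_{j\ell}s_{\ell j}}\,\tau_{j\ell}$ never enters $\chi_\mu^{\mathrm T}$ — precisely why the DE equations are $T$-independent. One small imprecision: the phrase ``the level of $(u\to v)$ equals $\max(d_G(i,\ell(u)),d_G(i,\ell(v)))$, so both covering edges share this level'' silently assumes both coverings have the same orientation; for opposite orientations the child-levels would in fact differ by one, and you need the parity contradiction above to rule that out. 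With that caveat spelled out, the argument is a valid and arguably more transparent alternative to the paper's proof.
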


\begin{proof}
    We follow the same strategy of proof as in Proposition~\ref{prop:ztzttilde}. For simplicity let us fix $\bm(r)=m$ for a certain $r\in [q]$. We have
    \[
        \EE\left[y_i^t(r)^m\right] = \sum_{\mu = 1}^{C_E} \sum_{(T_1, \cdots, T_m)\in \cA_i(\mu)} \left(\prod_{k=1}^{m} \Gamma(T_k,\balpha, t)\right)\left(\prod_{k=1}^{m} x(T_k)\right) \EE\left[\prod_{k=1}^m \overline{W}(T_k, t)\right].
    \]
    As in the case of $(z_i^t)$, we can also decompose this sum into a sum over trees $(T_1, \cdots, T_m)$ in the set $\widecheck\cA_i(\mu)$ (defined in \eqref{def:Acheck}) and trees that are in the set $\tilde{\mathcal{A}}_i(\mu)$ (defined in \eqref{def:Atilde}). The contribution of $m$-tuples of trees in $\widecheck\cA_i(\mu)$ is of order $K_n^{-1/2}$, so we may focus on $m$-tuples of trees in $\tilde{\mathcal{A}}_i(\mu)$. Recall the definition of a graph $G\in \cG_i^\mu$ as the merger of trees $(T_1, \cdots, T_m)$ where we identify vertices $u$ that have the same label $\ell(u)$. As in the previous proof, we further partition these graphs into trees and graphs that contain at least a cycle. The latter have a contribution of order $K_n^{-1}$ so we may focus on the contribution of graphs $G$ that are trees. Write
    $$
        \bar{\chi}_\mu^T  = \sum_{\substack{\bar G \in \cR^{\mu} \\ \bar G \text{ is a tree}}}
        \sum_{\substack{G \in \tilde{\cG}_i^\mu \, : \\\bU(G) = \bar G}}
        \sum_{\substack{(T_1, \cdots, T_m)\in \tilde{\cA_i}(\mu) \, : \\ \bs{G}(T_1,\cdots, T_m) = G}}
        \left( \prod_{k=1}^m \Gamma(T_k,\balpha, t) \right)
        \left( \prod_{k=1}^m x(T_k)  \right)
        \EE \left[ \prod_{k=1}^m \overline{W}(T_k) \right]\ .
    $$
    The proof of this proposition will be completed if we can show that $\chi_\mu^T=\bar{\chi}_\mu^T$.

    First, notice that the terms $\prod_{k=1}^m \Gamma(T_k,\balpha, t) $ and $\prod_{k=1}^m x(T_k) $ are the same in the expressions of $\chi_\mu^T$ (defined in \eqref{def:ximu}) and $\bar{\chi}_\mu^T$. So it suffices study the term $\EE \left[ \prod_{k=1}^m \overline{W}(T_k) \right]$. Two cases can be studied, whether this term is zero or non-zero.

    Consider any $m$-tuple of trees $(T_1, \cdots, T_m)\in \tilde{\cA_i}(\mu)$, if 
    $$
    \EE \left[ \prod_{k=1}^m \overline{W}(T_k) \right] \neq 0\ ,
    $$ 
    then for every matrix entry $(i,j)$ which is represented in the trees $T_1, \cdots, T_m$ there exist exactly two edges $(a\rightarrow b)$ and $(c\rightarrow d)$ such that $\{\ell(a),\ell(b)\} = \{\ell(c),\ell(d)\} = \{i,j\}$, in addition $|a| = |c|$ otherwise $\EE \left[ \prod_{k=1}^m \overline{W}(T_k) \right] = 0$, we then obtain a second moment of $W$ which means that 
    $$
    \EE \left[ \prod_{k=1}^m \overline{W}(T_k) \right]= \EE \left[ \prod_{k=1}^m W(T_k) \right]\ .
    $$
    Now suppose for the sake of contradiction that $$\EE \left[ \prod_{k=1}^m \overline{W}(T_k) \right] = 0\quad \text{and}\quad \EE \left[ \prod_{k=1}^m W(T_k) \right] \neq 0\ ,$$
    we  show that in this case the graph $G = \bG (T_1, \cdots, T_m)$ is not a tree which is a contradiction. There exists a matrix entry $(i,j)$ with $i<j$ which is represented in the trees $(T_1, \cdots, T_m)$ by two edges $(a\rightarrow b)$ and $(c\rightarrow d)$ such that vertices $a$ and $c$ do not have the same distance to the root $\degree$, i.e. $|a| > |c|$ for example. This is because $\EE \left[ \prod_{k=1}^m \overline{W}(T_k) \right] = 0$ and because $\tau_{ij}\neq 0$, $s_{ij}\neq 0$ and $s_{ji}\neq 0$. Three possible cases can be considered:
    \begin{itemize}
        \item $(a\rightarrow b)$ and $(c\rightarrow d)$ exist on the same path of a certain tree: by the non-backtracking condition, these edges should be separated by at least one vertex say $e$ of label $k\notin \{i,j\}$, i.e.: $$\cdots \rightarrow a \rightarrow b \rightarrow e \rightarrow \cdots \rightarrow c \rightarrow d \cdots \rightarrow \degree. $$
              As for the graph $G$, this means that starting from a vertex of label $i$ we should pass through a vertex of label $k\notin \{i,j\}$ and then return to the vertex of label $i$ which creates a cycle.

        \item  $(a\rightarrow b)$ and $(c\rightarrow d)$ exist in two different trees say $T_1$ and $T_2$ respectively:
              \begin{align*}
                  \cdots \rightarrow a \rightarrow b \rightarrow \cdots \rightarrow \cdots \rightarrow \degree \quad (T_1) \\
                  \cdots \rightarrow * \rightarrow c \rightarrow \ d \rightarrow \cdots \rightarrow \degree \quad (T_2)
              \end{align*}
              First notice that the labels of the vertices in each of these two paths are different: if two vertices on the same path have the same label say $k$ then due to the non-backtracking condition they should be separated by at least two other vertices which  result in a cycle in the graph $G$. Recall that the roots $\degree_{T_1}$ and $\degree_{T_2}$ are identified in the graph $G$ which means that in $G$ there exist a path from the vertex $\ell(b)$ to $\degree$ and another path from $\ell(d)$ to $\degree$, these two paths are distinct as they have different lengths which is a consequence of the condition $|a|<|c|$. In addition $\ell(b)$ and $\ell(d)$ are either equal or linked in $G$, this creates a cycle in the graph.

        \item $(a\rightarrow b)$ and $(c\rightarrow d)$ exist in two different paths of the same tree: similar to the previous case.

    \end{itemize}
\end{proof}

\subsection{Approximation of the AMP iterations}
\label{subsec:xtzt}

Let us now establish the relationship between AMP iterates $(\bx^t)_t$ and the non-backtracking iterations $(\bz^t)_t$. We  see in the following proposition that the moments of $\bx^t$ can be approximated by the moments of $\bz^t$.

\begin{proposition}
\label{prop:xtzt}
    For each $t \geq 1$ and each $\bm \in \NN^q$, we have that for each $i\in [n]$,
    $$
        \left| \EE (\bx^t_i)^\bm - \EE (\bz^t_i)^\bm \right| = \mathcal{O}\left(\frac{1}{\sqrt{K_n}}\right).
    $$
\end{proposition}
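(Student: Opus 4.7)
The plan is to proceed by induction on $t$, comparing the two recursions term by term via exact polynomial Taylor expansion, exploiting that the Onsager correction in \eqref{ampol} is designed precisely to cancel the leading backtracking contribution. At $t=1$ the convention $f(\cdot,\cdot,-1)\equiv 0$ kills the Onsager term and renders the excluded-index restriction in $\bz^1_{\ell\to i}$ vacuous, so $\bx^1_i = \bz^1_i$ identically and the estimate is trivial.

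For the inductive step, I would expand $f_r(\bx^t_\ell,\ell,t)$ around the non-backtracking argument $\bz^t_{\ell\to i}$. Since $f_r$ is polynomial of degree at most $d$, this gives a finite sum
\begin{equation*}
    \sum_{\ell} W_{i\ell}\, f_r(\bx^t_\ell,\ell,t)
    = z^{t+1}_i(r) + \sum_{\ell,s} W_{i\ell}\,\partial_s f_r(\bz^t_{\ell\to i},\ell,t)\bigl(x^t_\ell(s)-z^t_{\ell\to i}(s)\bigr) + \sum_\ell W_{i\ell} R_\ell,
\end{equation*}
where $R_\ell$ gathers second- and higher-order Taylor residuals. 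The recursions \eqref{ampol} for $\bx^t_\ell$ and \eqref{zij(r)} for $\bz^t_{\ell\to i}$ then yield
\begin{equation*}
    x^t_\ell(s) - z^t_{\ell\to i}(s) = W_{\ell i}\, f_s(\bx^{t-1}_i,i,t-1) - \mathrm{ONS}^t_\ell(s) + \sum_{k\ne i} W_{\ell k}\bigl(f_s(\bx^{t-1}_k,k,t-1)-f_s(\bz^{t-1}_{k\to\ell},k,t-1)\bigr).
\end{equation*}
The crucial algebraic observation is that when the backtracking piece $W_{\ell i}f_s(\bx^{t-1}_i,i,t-1)$ is multiplied by $W_{i\ell}\partial_s f_r(\bz^t_{\ell\to i},\ell,t)$ and summed over $\ell$, it reproduces, up to a further Taylor step replacing $\bz^t_{\ell\to i}$ by $\bx^t_\ell$, exactly the Onsager term being subtracted in \eqref{ampol}. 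Hence the difference $\bx^{t+1}_i(r) - z^{t+1}_i(r)$ collapses to a sum of higher-order Taylor residuals, nested Onsager cross-terms, and inductive differences controlled by the previous step of the induction.

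I would then bound the $m$-th moment of this remainder by the combinatorial machinery of Propositions \ref{prop:ztzttilde} and \ref{prop:ztyt}. Expanding every polynomial factor yields $m$-tuples of (not necessarily non-backtracking) labeled trees; each surplus structure (a Taylor term of order $\ge 2$, an Onsager cross-edge, or an inductive mismatch) forces at least one additional label identification in the merged graph $\bG(T_1,\ldots,T_m)$, producing a net factor $K_n^{-1/2}$ relative to the matched contribution when combined with the moment bound $|\EE W_{j\ell}^s|\le CK_n^{-s/2}$ from A-\ref{ass:X}--A-\ref{ass:S}. The uniform moment bound \eqref{Exbnd} on $\bx^{t-1}$, already furnished by Theorem \ref{thm:amp-vec}, together with the inductive hypothesis on all lower-order iterations, ensures uniform control of every polynomial expectation appearing in these residuals.

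The main obstacle will be the careful bookkeeping of the Onsager cancellation at the level of individual tree contributions: one must verify that the correlated pair $W_{i\ell}W_{\ell i}$, whose mean $V_{i\ell}=\sqrt{s_{i\ell}s_{\ell i}}\,\tau_{i\ell}$ is generally nonzero, is matched against the backtracking edges correctly despite the asymmetry $s_{i\ell}\ne s_{\ell i}$ and the nontrivial correlation profile $T$. In particular, the tree accounting of Section \ref{subsec:treeStructure} must distinguish the oriented edges $(u\to v)$ carrying $W_{\ell(v)\ell(u)}$ from their reverses, and one must show that backtracking-plus-Onsager cross contributions (which involve products like $W_{i\ell}W_{\ell i}W_{\ell j}W_{j\ell}$ for $j\ne i$) collapse to graphs with a genuine surplus edge, hence are $O(K_n^{-1})$ and a fortiori $O(K_n^{-1/2})$. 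Once this bookkeeping is in place, induction on $t$ closes the argument.
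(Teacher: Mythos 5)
Your proposal follows essentially the same route as the paper's proof: a polynomial Taylor expansion of $f_r(\bx^t_\ell,\ell,t)$ around the non-backtracking iterate $\bz^t_{\ell\to i}$, algebraic cancellation of the leading backtracking term $W_{i\ell}W_{\ell i}$ against the Onsager correction in \eqref{ampol}, and then a tree-combinatorics count showing that the residual forces a surplus edge in the merged graph, yielding the $K_n^{-1/2}$ factor. The paper packages the Taylor/Onsager bookkeeping as an exact structural identity (Lemma~\ref{lem:xtzt}), proved by induction, that writes $x_i^t(r)=z_i^t(r)+\sum_{T\in\cB_i^t}W(T)\tilde\Gamma(T,r,t)x(T)$ with $\cB_i^t$ the set of trees containing a backtracking path of length $3$ or a backtracking star; Proposition~\ref{prop:xtzt} then follows in a few lines because any $m$-tuple with $T_1\in\cB_i^t$ produces a merged graph in which at least one edge is covered three or more times, so the labeling count drops by a factor $K_n^{-1/2}$. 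Your sketch contains exactly this observation, though stated as ``each surplus structure forces at least one additional label identification'' rather than the sharper ``at least one edge is covered three times,'' and it interleaves the structural reduction with the moment bound rather than isolating it as a lemma. One small misdirection: you invoke the moment bound \eqref{Exbnd} from Theorem~\ref{thm:amp-vec}, but that estimate is itself a conclusion of the theorem being proved; the paper avoids any circularity by not needing it here -- the bounds on the Taylor residuals come directly from the bounded coefficients in \eqref{alpha-vec}, the compactly supported initial data, and the explicit tree-weight formula, not from a priori control of $\EE|(\bcx^{t-1}_i)^{\bm}|$. Apart from this, the proposal is a correct high-level account of the paper's argument.
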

In order to prove this proposition we  need the following structural lemma that
connects $x_i^t(r)$ to $z_i^t(r)$ for $i \in [n]$, $r\in [q]$ and $t\in
\mathbb{N}$. Consider $\bar\cU_i^t$ (resp. $\cU_i^t$) the set of unmarked trees
of the set $\bar\cT_i^t$ (resp. $\cT_i^t$). We can consider that these sets are
constructed by identifying the trees with the same structure and labels.
Denote also by $\cU$ the map that assigns to a tree $T$ its unmarked version
$\hat{T} := \cU(T)$. The two equations in Lemma~\ref{ztree} can be reformulated
as: 
\begin{align*}
    z_{i\to j}^t(r) &= \sum_{\hat{T}\in \cU^t_{i\to j}} W(\hat{T}) \Gamma(\hat{T}, r,t)x(\hat{T}), \\
    z_i^t(r) &= \sum_{\hat{T}\in \cU^t_i} W(\hat{T}) \Gamma(\hat{T},r,t)x(\hat{T}),
\end{align*}
where $W(T)$ and $x(T)$ are invariant with respect to the marking of the tree, and
$$\Gamma(\hat{T}, r, t) := \sum_{T\in \cT^t(r)\ : \ \cU(T)=\hat{T}}\Gamma(T, \balpha, t), \quad \forall \hat{T}\in \cU_i^t.$$
Consider $\cB^t\subset \bar\cU^t$ to be the set of trees $T$ such that for each $(u\to v)\in E(T)$ we have $\ell(u)\neq \ell(v)$, in addition at least one of the following conditions holds,
\begin{itemize}
    \item there exists a backtracking path of length $3$: a path $a\to b\to c\to d$ such that $\ell(a)=\ell(c)$ and $\ell(b) = \ell(d)$,
    \item there exists a backtracking star: $a\to b \to c$ and $a^\prime \to b\to c$ such that $\ell(a)=\ell(a^\prime)=\ell(c)$.
\end{itemize}

\begin{lemma}
\label{lem:xtzt}
For each $t,r,i$ there exists a $\tilde \Gamma (., t, r)$ such that $\tilde \Gamma (T, r, t)= \mathcal{O}(1)$ uniformly in $T$ and 
$$ x_i^t(r) = z_i^t(r) + \sum_{T\in \cB_i^t} W(T) \tilde{\Gamma}(T, r, t)x(T). $$
\end{lemma}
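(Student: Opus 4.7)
The plan is to prove Lemma~\ref{lem:xtzt} by strong induction on $t$. At $t=1$ the recursion gives $x_i^1(r) = \sum_\ell W_{i\ell} f_r(\bs x_\ell^0, \ell, 0) = z_i^1(r)$, while $\cB_i^1$ is empty (there is not enough depth to accommodate either a length-$3$ backtracking path or a backtracking star), so the identity holds with an empty sum. Assume the lemma is established for every $i, r$ up through step $t$; the task is to establish it at step $t+1$.

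For the inductive step, I expand
\[
x_i^{t+1}(r') = \sum_\ell W_{i\ell} \sum_{\bs j} \alpha_{\bs j}(r',\ell,t) \prod_s x_\ell^t(s)^{j_s} - \sum_s f_s(\bs x_i^{t-1}, i, t-1) \sum_\ell W_{i\ell} W_{\ell i} \frac{\partial f_{r'}}{\partial x(s)}(\bs x_\ell^t, \ell, t),
\]
and substitute $x_\ell^t(s) = z_{\ell\to i}^t(s) + \delta_{\ell\to i}^t(s)$ with $\delta_{\ell\to i}^t(s) := x_\ell^t(s) - z_{\ell\to i}^t(s)$. Developing the multinomial $\prod_s (z_{\ell\to i}^t(s) + \delta_{\ell\to i}^t(s))^{j_s}$ separates the main sum into a zeroth-order piece (in $\delta$), a first-order piece, and higher-order pieces. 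The zeroth-order piece aggregates exactly to $z_i^{t+1}(r')$ by Lemma~\ref{ztree}.

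The crux is the first-order piece. Unwinding the AMP recursion for $x_\ell^t(s)$ shows that the leading contribution to $\delta_{\ell\to i}^t(s)$ is the single term $W_{\ell i} f_s(\bs x_i^{t-1}, i, t-1)$, corresponding to the immediate return to the root type $i$. Plugging this back into the first-order expansion yields
\[
\sum_\ell W_{i\ell} W_{\ell i} \sum_s \frac{\partial f_{r'}}{\partial x(s)}(\bs z_{\ell\to i}^t, \ell, t)\, f_s(\bs x_i^{t-1}, i, t-1),
\]
which matches the Onsager subtraction exactly up to replacing $\bs z_{\ell\to i}^t$ by $\bs x_\ell^t$ inside $\partial f_{r'}$. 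The mismatch is itself of order $\delta$, and when combined with the subleading contributions to $\delta_{\ell\to i}^t(s)$ (which by the inductive hypothesis are indexed by backtracking structures from $\cB_\ell^t$), it produces trees whose top structure is a backward edge $W_{i\ell}$, a return edge $W_{\ell i}$, and a further edge reusing either the root type $i$ or the intermediate type --- that is, precisely a length-$3$ backtracking path $a\to b\to c\to d$ with $\ell(a)=\ell(c)$ and $\ell(b)=\ell(d)$ in $\cB_i^{t+1}$.

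The higher-order multinomial pieces carry at least two independent factors of $\delta_{\ell\to i}^t(\cdot)$ attached to the same intermediate vertex of type $\ell$; each factor contributes its own return edge to the root type $i$, producing a backtracking star $a\to b\to c$ and $a'\to b\to c$ with $\ell(a)=\ell(a')=\ell(c)=i$, also in $\cB_i^{t+1}$. Along the way each tree is assigned a coefficient $\tilde\Gamma(T, r', t)$ which is a finite sum of products of the uniformly bounded $\alpha_{\bs j}(\cdot,\cdot,\cdot)$, with the number of summands and factors controlled only by $d$ and $t$, yielding $\tilde\Gamma(T, r', t) = \mathcal{O}(1)$ uniformly in $T$. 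The main obstacle will be the careful bookkeeping of the first-order cancellation: one must verify that every residual term after Onsager subtraction genuinely carries one of the two backtracking signatures, with no length-$2$ backtracking left over. This is the combinatorial heart of the step, and it is where the non-symmetry of $W$ forces us to distinguish $W_{i\ell}$ from $W_{\ell i}$ (a distinction absent in \cite{Bayati_2015, HACHEM2024104276}), while the zero-diagonal assumption A-\ref{ass:diag-nulle} removes self-loops and keeps the bookkeeping manageable.
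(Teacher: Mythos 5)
Your high-level strategy — strong induction on $t$, Taylor/multinomial expansion of $f_r(\bs x_\ell^t)$ around $\bs z_{\ell\to i}^t$, identifying the zeroth order with $z_i^{t+1}(r')$, and classifying residual trees by backtracking paths and backtracking stars — is the same as the paper's, and the treatment of the higher-order multinomial terms (two or more $\delta$-factors at the same vertex $\Rightarrow$ backtracking star, hence $\cB_i^{t+1}$) is correct in outline. The issue is in the first-order piece.

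Your decomposition of $\delta_{\ell\to i}^t(s) := x_\ell^t(s) - z_{\ell\to i}^t(s)$ is not the one the inductive hypothesis delivers. The hypothesis gives $x_\ell^t(s) = z_\ell^t(s) + e_\ell^t(s)$ with $e_\ell^t(s)$ a sum over $\cB_\ell^t$, and since $z_\ell^t(s) - z_{\ell\to i}^t(s) = W_{\ell i} f_s(\bs z_{i\to\ell}^{t-1}, i, t-1)$, the clean split is $\delta_{\ell\to i}^t(s) = W_{\ell i} f_s(\bs z_{i\to\ell}^{t-1}) + e_\ell^t(s)$. You instead take the leading term to be $W_{\ell i} f_s(\bs x_i^{t-1})$, which indeed makes the Onsager factor $f_s(\bs x_i^{t-1})$ match on the nose, but then the remainder is $e_\ell^t(s) - W_{\ell i}\bigl[f_s(\bs x_i^{t-1}) - f_s(\bs z_{i\to\ell}^{t-1})\bigr]$, and the second piece is emphatically not ``indexed by backtracking structures from $\cB_\ell^t$'' as you assert: it involves $W_{\ell i}$ together with the discrepancy $\bs x_i^{t-1} - \bs z_{i\to\ell}^{t-1}$, whose tree representation lives in $\cB_i^{t-1}$ and in the $z_{i,\ell}^{t-1}$ backtracking direction, not $\cB_\ell^t$. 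Because of this mis-attribution, the bookkeeping you defer to the end is not a minor gap but the substance of the lemma: the residual after Onsager subtraction does not split transparently into $\cB_i^{t+1}$-indexed sums under your choice.

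The paper instead subtracts the Onsager term first and isolates the genuine discrepancy $S_1 = \sum_{\ell,s} W_{i\ell}W_{\ell i}\bigl(f_s(\bs x_i^{t-1})\partial_{x(s)} f_{r'}(\bs x_\ell^t) - f_s(\bs z_{i\to\ell}^{t-1})\partial_{x(s)} f_{r'}(\bs z_{\ell\to i}^t)\bigr)$, then performs a second Taylor expansion of $(x,x')\mapsto f_s(x)\,\partial_{x(s)} f_{r'}(x')$ around $(\bs z_{i\to\ell}^{t-1},\bs z_{\ell\to i}^t)$. That second expansion is exactly where each residual term is forced to pick up at least one extra $z_{i,\ell}$, $z_{\ell,i}$, $e_i^{t-1}$, or $e_\ell^t$ factor, and it is how one verifies case by case that every leftover tree carries a length-$3$ backtracking path or a backtracking star, with no length-$2$ leftover. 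Without that second expansion (or an equivalent device) your first-order cancellation is not proved. To repair the proposal you should switch to the inductive decomposition $\delta_{\ell\to i}^t(s) = W_{\ell i} f_s(\bs z_{i\to\ell}^{t-1}) + e_\ell^t(s)$ and carry out the second-order Taylor argument for the Onsager mismatch explicitly.
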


\begin{proof}
    We prove this lemma by induction on $t$. The cases $t=0,1$ are simple, suppose that $t\geq 2$, and that the equation is valid for $t$. Recall the AMP recursion given by,
    $$ x_i^{t+1}(r) = \sum_{\ell = 1}^n W_{i\ell} f_r(x_\ell^t) - \sum_{s=1}^q\sum_{\ell = 1}^n W_{i\ell}W_{\ell i}f_s(x_i^{t-1}) \partial_{x(s)} f_r(x_\ell^t). $$
    Here we omit the dependence of $f$ on $\ell$ and $t$, i.e. $f_r(x_\ell^t, \ell, t) = f_r(x_\ell^t)$. Recall that
    $f_r$ is a multivariate polynomial, so by Taylor's expansion at $z_{\ell \to i}^t$, we can write 
    \begin{equation}
    \begin{split}
        \label{eq:taylor}
        f_r(x_\ell^t) &= f_r(z_{\ell\to i}^t) + \sum_{s\in [q]} \left(x_\ell^t(s) - z_{\ell \to i}^t(s)\right) \partial_{x(s)} f_r(z_{\ell\to i}^t) \\ 
        &+ \sum_{\bs{k} \ : \ k_1+\cdots + k_q\geq 2}\left[\prod_{s=1}^q \frac{\left(x_\ell^t(s) - z_{\ell \to i}^t(s)\right)^{k_s}}{k_s!}\right] D^{\bs{k}} f_r(z_{\ell \to i}),\\
        \end{split}
    \end{equation}
    where for $\bs{k}\in \NN^q$ and $x\in \RR^q$ we denote by $D^{\bs{k}}_x$ the following differential operator $$D^{\bs{k}} g(x) = \frac{\partial^{k_1+\cdots+k_q} g(x)}{\partial x(1)^{k_1}\cdots x(q)^{k_q}}.$$
    Let $e_\ell^t(r) := \sum_{T\in \cB_\ell^t} W(T) \tilde{\Gamma}(T, r, t) x(T)$, by the induction hypothesis we have 
    \begin{align*}
        x_\ell^t(r) &= z_\ell^t(r) + e_\ell^t(r) \\
        &= z_{\ell \to i}^t(r) + z_{\ell, i}^t(r) + e_\ell^t(r),
    \end{align*}
    where we use the notation $z_{\ell, i}^t(r) := W_{\ell i}f_r(z_{i\to \ell}^{t-1})$. Plugging this equation into \eqref{eq:taylor} gives 
    \begin{equation}
        \label{eq:taylor2}
        \begin{split}
        f_r(x_\ell^t) &= f_r(z_{\ell\to i}^t) + \sum_{s\in [q]} \left(z_{\ell, i}^t(s) + e_\ell^t(s)\right) \partial_{x(s)} f_r(z_{\ell\to i}^t) \\ 
        &+ \sum_{k \ : \ k_1+\cdots + k_q\geq 2}\left[\prod_{s=1}^q \frac{\left(z_{\ell, i}^t(s) + e_\ell^t(s)\right)^{k_s}}{k_s!}\right] D^k f_r(z_{\ell \to i}),\\
        \end{split}
    \end{equation}
    Now, multiplying by $W_{i\ell}$ on both sides and summing over $\ell$ gives the following
    \begin{equation}
    \label{sumwf}
    \begin{split}
        \sum_{\ell\in [n]} W_{i\ell} f_r(x_\ell^t) &= z_i^{t+1}(r) + \sum_{\ell\in [n], s\in[q]} W_{i\ell} \left(z_{\ell, i}^t(s) + e_\ell^t(s)\right) \partial_{x(s)} f_r(z_{\ell\to i}^t) \\
        &+ \sum_{\ell\in [n],\ k_1+\cdots + k_q\geq 2}W_{i\ell} \left[\prod_{s=1}^q \frac{\left(z_{\ell, i}^t(s) + e_\ell^t(s)\right)^{k_s}}{k_s!}\right] D^k f_r(z_{\ell \to i}).
    \end{split}
    \end{equation}
    The first term is obtained by the definition of $z_i^{t+1}(r)$, see Eq~\eqref{zi(r)}. The second term can be decomposed
    into the two following sums,
    \begin{equation*}
        \begin{split}
            &\sum_{\ell\in [n], s\in[q]} W_{i\ell} W_{\ell i} f_r(z_{i\to \ell}^{t-1})\partial_{x(s)} f_r(z_{\ell\to i}^t) + \sum_{\ell\in [n], s\in[q]} W_{i\ell} e_\ell^t(s) \partial_{x(s)} f_r(z_{\ell\to i}^t).\\
        \end{split}
    \end{equation*}
    Now subtracting the Onsager term from both sides of Eq~\eqref{sumwf} gives the following 
    \begin{eqnarray}
            x_i^{t+1}(r) &=& z_i^{t+1}(r) - \sum_{\ell\in [n], s\in[q]} W_{i\ell} W_{\ell i} \left(f_r(x_i^{t-1})\partial_{x(s)} f_r(x_\ell^t)-f_r(z_{i\to \ell}^{t-1})\partial_{x(s)} f_r(z_{\ell\to i}^t)\right) \nonumber \\
            &&+ \sum_{\ell\in [n], s\in[q]} W_{i\ell} e_i^t(s) \partial_{x(s)} f_r(z_{\ell\to i}^t) \\
        &&\qquad + \sum_{\ell\in [n],\ k_1+\cdots + k_q\geq 2}W_{i\ell} \left[\prod_{s=1}^q \frac{\left(z_{\ell, i}^t(s) + e_i^t(s)\right)^{k_s}}{k_s!}\right] D^k f_r(z_{\ell \to i})\ .\nonumber
    \end{eqnarray}
    Denote by $S_1$, $S_2$ and $S_3$ respectively, the three terms in the right hand side of the previous equation except $z_i^{t+1}(r)$. One wants to prove that these three terms can be written as sums over trees in $T\in \cB_i^{t+1}$ of terms having the form,
    $$ W(T)\tilde{\Gamma}(T, r, t)x(T), $$
    where $\tilde{\Gamma}(T, r, t)$ is obtained by construction, the exact form of this term is not important, we only need it to be bounded as $n$ goes to infinity.
    \paragraph{The term $S_2$} The second term is given by the following formula,
    $$ S_2 = \sum_{\ell \in [n], \ s\in [q]} W_{i\ell} e_\ell^t(s) \partial_{x(s)} f_r(z_{\ell \to i}^t). $$
    The terms in this sum are given by 
    \begin{align*}
        e_\ell^t(s) &= \sum_{T\in \cB_\ell^t} W(T) \Gamma(T, s, t) x(T), \\
        \partial_{x(s)}f_r(z_{\ell \to i})& = \sum_{k_1+ \cdots+ k_q\leq d} \alpha_{k_1, \cdots, k_q}(r, \ell, t) k_s \left(z_{\ell \to i}^t(s)\right)^{k_s - 1} \prod_{u\in [q]\setminus \{s\}} \left(z_{\ell \to i}^t(u)\right)^{k_u},\\
    \end{align*}
    with 
    $$ z_{\ell \to i}^t(u) = \sum_{T \in \cU_{\ell \to i}^t} W(T) \Gamma(T,u,t) x(T). $$
    $S_2$ can thus be interpreted as a sum over trees $T\in \cB_i^{t+1}$ constructed as follows: 
    \begin{itemize}
        \item The root $\degree$ has a type equal to $i$, and $\degree$ has a child, say $\square$, of type $\ell$. This is due to $W_{i\ell}$.
        \item The vertex $\square$ is the root
        of a tree in $\cB_\ell^t$. This is due to the term $e_\ell^t(s)$.
        \item The root's child $\square$ is also the root of $k_1+\cdots+(k_s-1)+\cdots+k_q$ additional trees in $\cU_{\ell\to i}^t$. This is due to the term $\partial_{x(s)}f_r(z_{\ell \to i})$. Note that in total, $\square$ has at most $d\geq k_1+\cdots+(k_s-1)+\cdots+k_q + 1$ children.
    \end{itemize}
    By construction, we can easily see that $T$ is in $\cB_i^{t+1}$.
    \paragraph{The term $S_1$}
    The first term is given by the following formula,
    $$ S_1 = \sum_{\ell\in [n], s\in[q]} W_{i\ell} W_{\ell i} \left(f_r(x_i^{t-1})\partial_{x(s)} f_r(x_\ell^t)-f_r(z_{i\to \ell}^{t-1})\partial_{x(s)} f_r(z_{\ell\to i}^t)\right) \,.$$
    Doing a Taylor expansion of the polynomial $g:(x, x^\prime) \mapsto f_r(x)\partial_{x(s)} f_r(\bs{x}^\prime)$ around $(z_{i\to \ell}^{t-1},z_{\ell\to i}^t)$ gives 
    $$ S_1 = \sum_{\ell\in [n] s\in [q]}W_{i\ell}W_{\ell i} \sum_{|j|+|k|\geq 1} \left[\prod_{u=1}^q \frac{\left(z_{i, \ell}^{t-1}(u) + e_i^{t-1}(u)\right)^{j_u}\left(z_{\ell, i}^t(u) + e_\ell^t(u)\right)^{k_u}}{(j_u+k_u)!}\right] D^{(j,k)}g(z_{i\to \ell}^{t-1},z_{\ell\to i}^t). $$
    $S_1$ can be seen as a sum, up to multiplication factors, of the following terms
    $$ W_{i\ell} W_{\ell i} \prod_{u=1}\left(z_{i, \ell}^{t-1}(u) + e_i^{t-1}(u)\right)^{j_u}\left(z_{\ell, i}^t(u) + e_\ell^t(u)\right)^{k_u} \left(z_{i\to \ell}^{t-1}(u)\right)^{a_u}\left(z_{\ell\to i}^t(u)\right)^{b_u}, $$
    with the constraint that $\sum_{u =1}^q (j_u + k_u) \geq 1$. To show that $S_1$ can be seen a sum of trees $T$ belonging to $\cB_i^{t+1}$, two cases should be considered, either it exists a $u$ such that $j_u\geq 1$ or $k_u\geq 1$.
    \begin{itemize}
        \item If there exists a $u$ such that $j_u\geq 1$, we construct a tree in $\cB^{t+1}$ as follows: 
        \begin{itemize}
            \item The root $\degree$ has a type equal to $i$, and $\degree$ has a child, say $\square$, of type $\ell$. This is due to $W_{i\ell}$.
            \item The vertex $\square$ is the root of trees in $\cU_{\ell\to i}^t$, which is due to the multiplication by $z_{\ell\to i}^t(u)$.
            \item The vertex $\square$ has a child, say $\lozenge$, of type $i$, which is due to $W_{\ell i}$.
            \item The vertex $\lozenge$ is the root of trees in $\cU_{i\to \ell}^{t-1}$, which is due to $z_{i\to \ell}^{t-1}(u)$.
        \end{itemize}
        Now because $j_u\geq 1$, at least one of the following holds:
        \begin{itemize}
            \item The vertex $\lozenge$ is the root of trees in $\cB_i^{t-1}$, which obviously results in a tree $T\in \cB_i^{t+1}$.
            \item The vertex $\lozenge$ is has a child of type $\ell$, which creates a backtracking path of length $3$ of types $\ell \to i\to \ell \to i$ which also results in a tree $T\in \cB_i^{t+1}$. This child is the root of a tree in $\cU_{i\to \ell}^{t-1}$. And this is due to the term $z_{i,\ell}^{t-1}$.
        \end{itemize}
        \item If there exists a $u$ such that $k_u\geq 1$, we repeat the same argument. This time, the multiplication by $z_{\ell, i}^t(u)$ gives a backtracking star $[i ,\ i \to \ell \to i]$, which results in a tree $T\in \cB_i^{t+1}$. Otherwise, the multiplication by $e_\ell^t(u)$ adds a tree in $\cB_\ell^t$ which obviously results in a final tree $T$ belonging to $\cB_i^{t+1}$.
    \end{itemize}
    \paragraph{The term $S_3$}
    The third term is given by the following formula,
    \begin{equation*}
        S_3 = \sum_{\ell\in [n],\ k_1+\cdots + k_q\geq 2}W_{i\ell} \left[\prod_{s=1}^q \frac{\left(z_{\ell, i}^t(s) + e_i^t(s)\right)^{k_s}}{k_s!}\right] D^k f_r(z_{\ell \to i}).
    \end{equation*}
    Similarly to the interpretation of $S_2$ as a sum of trees in $\cB_i^{t+1}$, we can repeat the same arguments for $S_3$. The terms that have $e_i^t(s)$ as a multiplication factor naturally results in trees belonging to $\cB_i^{t+1}$. In the other case, notice that the constraints $k_1+\cdots+k_q\geq 2$ implies that a term of the form $W_{i\ell} z_{\ell, i}^t(s)z_{\ell, i}^t(s^\prime)$ always exists, this term produces a backtracking star and thus the final tree $T$ belongs to $\cB_i^{t+1}$.

 By studying the tree terms, we proved the existence of a $\tilde{\Gamma}(T,t,t+1)$ such that 
 $$ x_i^{t+1}(r) = z_i^{t+1} + \sum_{T\in \cB_i^{t+1}} W(T) \tilde{\Gamma}(T,r,t+1) x(T). $$
Where $\tilde{\Gamma}(T,t,t+1)$ is a function of $\tilde{\Gamma}(T,t,t)$ and the activation functions' coefficients. It remains to check that $\tilde{\Gamma}(T,t,t+1) = \mathcal{O}(1)$. This can be easily verified, and its proof will be omitted.
\end{proof}

\begin{remark}
    The previous proof is a non-Symmetric adaptation of the techniques developed in \cite{Bayati_2015} and \cite{HACHEM2024104276} in the symmetric case. Instead of terms $W_{i\ell}^2$ in the symmetric case, we handle their counterparts $W_{i\ell}W_{\ell i}$ in the non-Symmetric case and properly interpret them as edges of a tree. Accordingly, we rely on an Onsager term based on matrix $W\odot W^\top$ instead of $W^{\odot 2}$. 
\end{remark}

Finally, we can prove Proposition~\ref{prop:xtzt} by repeating the same arguments used in the proof of Proposition~\ref{prop:ztzttilde}.

\begin{proof}[Proof of Proposition~\ref{prop:xtzt}] 
We can restrict ourselves to the case of $\bm(r) = m$ and $\bm(s)=0$ for $s\neq r$. The $m$-th power of $x_i^t(r)$ is given by 
\begin{align*}
    \EE\left(x_i^t(r)\right)^m - \EE\left(z_i^t(r)\right)^m &= \EE\left(z_i^t(r) + \sum_{T\in \cB_i^t} W(T)\tilde{\Gamma}(T, r, t)x(T) \right)^m - \EE\left(z_i^t(r)\right)^m\\
    &\leq C \sum_{T_1\in \cB_i^t}\sum_{T_2, \cdots, T_m\in \cB_i^t \cup \cT_i^t(r)}\left|\EE\prod_{i=1}^m W(T_i)\right|.
\end{align*}
The key observation here is to notice that the graph obtained by merging the trees $(T_1, \cdots, T_m)$ has an edge which is the result of the fusion of at least three edges, and this is because $T_1$ has a backtracking path or a backtracking star. This implies a bound on the number of edges of the resulting graph.
    
\end{proof}

\subsection{End of proof of Theorem~\ref{thm:amp-vec}}
\label{subsec:xtut}

We  now show that the sequence of Gaussian vectors $(U^t)$ defined in \eqref{eq:MatrixDE} by the Density Evolution equations
approximate the iterations $(\bs{y}^t)$ defined in \eqref{def:yij} and \eqref{def:yi} where the matrices $(W^t)_{t\in
\mathbb{N}}$ are independent and Gaussian.
\begin{proposition}
\label{prop:ytut}
    Let $W$ be a random matrix defined in \eqref{def:W} and satisfying assumptions A-\ref{ass:X} and A-\ref{ass:S}, suppose in addition that $W$ is gaussian. Let $(W^t)_{t\in \mathbb{N}}$ be a sequence of independent copies of $W$. Then for each multi-index $\bm \in \NN^{q}$ and each integer $t>0$ we have
    \begin{equation*}
        \underset{i\in [n]}{\max} \left| \EE\left[(\bs{y}_i^t)^\bm\right] 
  - \EE\left[(U_i^t)^\bm\right]  \right| \longrightarrow 0.
    \end{equation*}
\end{proposition}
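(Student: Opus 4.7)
The plan is an induction on $t$. The base case $t=1$ is immediate, since $\bs y^1_i=\sum_\ell W^0_{i\ell}f(\bs x^0_\ell,\ell,0)$ is a linear combination of independent centered Gaussians and is therefore distributed as $\cN(0,Q^1_i)$, which is the law of $U^1_i$; all moments match exactly. For the inductive step, the driving observation is that, $W^t$ being independent of $\cF_{t-1}:=\sigma(W^0,\ldots,W^{t-1})$ and its row-$i$ entries being independent centered Gaussians with variances $(s_{i\ell})_\ell$, conditionally on $\cF_{t-1}$ the vector $\bs y^{t+1}_i$ is centered Gaussian in $\RR^q$ with random covariance
\begin{equation*}
\hat Q^{t+1}_i\;=\;\sum_{\ell\in[n]} s_{i\ell}\,f(\bs y^t_{\ell\to i},\ell,t)\,f(\bs y^t_{\ell\to i},\ell,t)^\top.
\end{equation*}
By Wick's theorem there is a fixed polynomial $P_\bm$ in the entries of a $q\times q$ matrix such that $\EE[(\bs y^{t+1}_i)^\bm\mid\cF_{t-1}]=P_\bm(\hat Q^{t+1}_i)$, and identically $\EE[(U^{t+1}_i)^\bm]=P_\bm(Q^{t+1}_i)$. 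The proposition therefore reduces, for each fixed $\bm$, to showing that for every $K\ge 1$ and every tuple $((r_k,r'_k))_{k=1}^K$,
\begin{equation*}
\EE\Bigl[\prod_{k=1}^K \hat Q^{t+1}_i(r_k,r'_k)\Bigr]\;\longrightarrow\;\prod_{k=1}^K Q^{t+1}_i(r_k,r'_k).
\end{equation*}

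To close the induction I would strengthen the induction hypothesis so that it also controls joint moments of the directed iterates: for every fixed $K\ge 1$, every collection of directed edges $(\ell_k\to j_k)_{k=1}^K$ and every multi-indices $\bm_k$,
\begin{equation*}
\EE\Bigl[\prod_{k=1}^K(\bs y^t_{\ell_k\to j_k})^{\bm_k}\Bigr]\;-\;\prod_{k=1}^K\EE[(U^t_{\ell_k})^{\bm_k}]\ \underset{n\to\infty}{\longrightarrow}\ 0.
\end{equation*}
Expanding $\prod_k \hat Q^{t+1}_i(r_k,r'_k)=\sum_{\ell_1,\ldots,\ell_K}\prod_k s_{i\ell_k}\cdot\prod_k[f_{r_k}f_{r'_k}](\bs y^t_{\ell_k\to i})$ and taking expectation, the contribution of tuples $(\ell_1,\ldots,\ell_K)$ with pairwise distinct coordinates factorizes by the strengthened hypothesis and yields $\prod_k Q^{t+1}_i(r_k,r'_k)+o(1)$, while the coincidence tuples (at least two equal $\ell_k$) contribute $\mathcal O(K_n\cdot K_n^{-2})=\mathcal O(1/K_n)$ by A-\ref{ass:S} ($|\cK_i|\le C_{\text{card}}K_n$ and $s_{i\ell}\le C_S/K_n$) combined with the uniform moment bound \eqref{Exbnd} for integrability.

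The strengthened hypothesis itself propagates by the same conditional-Gaussianity trick applied jointly to $(\bs y^{t+1}_{\ell_k\to j_k})_k$: conditional on $\cF_{t-1}$ this collection is Gaussian, with within-block covariances of the $\hat Q$-type and with between-block conditional cross-covariances reduced, by the row-wise independence of $W^t$, to the single correlated matrix pair $(W^t_{\ell_k\ell_{k'}},W^t_{\ell_{k'}\ell_k})$ and thus of order $V_{\ell_k\ell_{k'}}=\mathcal O(1/K_n)$. Consequently the blocks are asymptotically conditionally independent and the joint moments factorize as $\prod_k P_{\bm_k}(\hat Q^{t+1}_{\ell_k\to j_k})+o(1)$, reducing once more to the joint concentration of $(\hat Q^{t+1}_{\ell_k\to j_k})_k$ via the same expansion. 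The main obstacle is the combinatorial bookkeeping of these coincidence/overlap terms under sparsity: one has to verify that no accumulation of $s$-weights in tuples with partial coincidences spoils the $1/K_n$ decay, which ultimately follows from the same graph-counting estimate used for $|\cG^\mu_i|$ in the proof of Proposition~\ref{prop:ztzttilde}, transplanted to the present Gaussian setting; integrability throughout is provided by \eqref{Exbnd}, itself propagated along the induction.
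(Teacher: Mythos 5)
Your proposal is correct in substance, but it takes a deliberately self-contained route where the paper takes a shortcut. The paper's entire proof of Proposition~\ref{prop:ytut} is the remark that follows it: conditionally on $\cF_t=\sigma(W^0,\ldots,W^{t-1})$, the iterate $y^{t+1}_{i\to j}$ is centered Gaussian with a covariance that depends only on the variance profile $S$ (row $i$ of $W^t$ consists of independent $\cN(0,s_{i\ell})$ entries, so the correlation profile $T$ is invisible at this step), and consequently the $y$-process has the same conditional structure as the one generated by a \emph{symmetric} Gaussian matrix with variance profile $S$. The result is then imported verbatim from \cite[Proposition~15]{HACHEM2024104276}, with no further work. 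Your argument, by contrast, re-derives that cited proposition inside the present non-symmetric framework: the conditional-Gaussianity/Wick step, the reduction to concentration of the empirical covariance $\hat Q^{t+1}_i$, the strengthened induction hypothesis controlling joint moments of $(\bs y^t_{\ell_k\to j_k})_k$, and the $\mathcal O(1/K_n)$ estimates on coincidence tuples and on conditional cross-covariances via $V_{\ell_k\ell_{k'}}$. All of these steps are sound; the cross-covariance observation (only the pair $(W^t_{\ell_k\ell_{k'}},W^t_{\ell_{k'}\ell_k})$ couples two rows, giving a $V$-sized contribution) is exactly the non-symmetric analogue of the $s_{\ell\ell'}$-sized coupling that appears in the symmetric case, which is why the paper can invoke the prior result without change.

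What the paper's route buys is brevity and the avoidance of re-proving a concentration lemma; what your route buys is self-containment and transparency about \emph{why} the correlation profile drops out (through the explicit $\mathcal O(1/K_n)$ cross-covariance bound). One caveat worth flagging in your write-up: when you sum the strengthened induction hypothesis over $\sim K_n^K$ tuples $(\ell_1,\ldots,\ell_K)$, the $o(1)$ error per tuple must be uniform in the tuple (and in $i$) for the total error to vanish after weighting by $\prod_k s_{i\ell_k}$; you gesture at this via \eqref{Exbnd}, but a careful version of the argument should state the uniformity explicitly, since it is precisely the content that \cite[Proposition~15]{HACHEM2024104276} packages. You acknowledge the remaining bookkeeping, so this is not a gap in the plan, only a point at which the paper's citation-based route is materially easier to execute.
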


\begin{remark}
    Recall that the random matrix $(U^t_1,\cdots, U^t_n)^\top \in \RR^{n\times q}$ is defined such that $(U^t_i)_{i\in [n]}$ are independent and such that $U_i^t \sim \cN(0,Q_i^t)$ where $(Q_i^t)_t$ is a sequence of $k\times k$ covariance matrices defined recursively by
    $$ 
    Q_i^{t+1} = \sum_{\ell \in [n]} s_{i\ell} \EE\left[f(U_\ell^t,\ell, t)f(U_\ell^t,\ell, t)^\top\right]\ . 
    $$
    In particular, the law of $U$ does not depend on our correlation profile.
    
    We also recall that the iterations $y^t$ are defined by $y^0_{i\rightarrow j} = x_i^0$ and
    $$ y_{i\rightarrow j}^{t+1} = \sum_{\ell \in [n] \setminus \{j\}} W_{i\ell}^t f(y_{\ell\rightarrow i}^{t}, \ell, t)$$
    which implies that the conditional distribution of $y_{i\rightarrow j}^{t+1} $ given $\cF_t := \sigma\{W^0,\cdots, W^{t-1}\}$ is $\cN_k\left(0, H_{ij}^{t+1}\right)$ where $(H_{ij}^t)_t$ is a sequence of $q\times q$ covariance matrices defined for each $t\in \NN^\star$ by the following recursion
    $$ H_{ij}^{t+1} = \sum_{\ell\in [n]\setminus \{j\}} s_{i\ell} \EE\left[f(y_{\ell\rightarrow i}^{t}, \ell, t) f(y_{\ell\rightarrow i}^{t}, \ell, t)^\top\right]  \, .
    $$
We therefore notice that the conditional distribution of 
$y^{t+1}_{i\rightarrow j}$ given $\cF_{t}$ is unchanged if we replace the 
matrix $W$ with a random symmetric matrix $\Tilde{W}$ having the same variance 
profile as $W$. By doing so, we can directly apply the result in \cite[Proposition 15]{HACHEM2024104276}.
\end{remark}

Combining the previous results we get the following convergence for each multi-index $\bm$
\begin{equation*}
    \underset{i\in [n]}{\max} \left| \EE\left[(x_i^t)^\bm\right] - \EE\left[(U_i^t)^\bm\right]  \right| \longrightarrow 0.
\end{equation*}
We can then use the triangular inequality to get this same result for any multivariate polynomial with bounded coefficients instead considering only the monomial $X^\bm$.
\begin{proposition}
\label{prop:xtut}
    Let $\psi: \RR^q \times [n] \rightarrow \RR$ such that $\psi(., \ell)$ is a multivariate polynomial with bounded degree and bounded coefficients. Then for each subset $\cS^{(n)}$ of $[n]$ with $|\cS^{(n)}|\rightarrow \infty$, it holds that
    \begin{equation*}
        \frac{1}{|\cS^{(n)}|} \sum_{i\in \cS^{(n)}} \EE\left[\psi(x_i^t, i)\right] - \EE\left[\psi(U_i^t, i)\right] \longrightarrow 0.
    \end{equation*}
\end{proposition}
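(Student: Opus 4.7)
My plan is to derive Proposition~\ref{prop:xtut} as a direct consequence of the four comparison results established in the preceding subsections, combined with an elementary triangle inequality and linearity of the expectation.

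First I would reduce to the monomial case. Writing
\[
\psi(\bu,\ell) \;=\; \sum_{\bm \in \NN^q,\ |\bm|\le d'} \beta_n(\bm,\ell)\,\bu^{\bm}
\]
with $\sup_{n,\ell,\bm}|\beta_n(\bm,\ell)|\le C$ and with a finite number of multi-indices $\bm$ (independent of $n$), it is enough, by linearity and the triangle inequality, to prove
\[
\max_{i\in[n]}\bigl|\,\EE[(\bcx_i^{t})^{\bm}] - \EE[(U_i^{t})^{\bm}]\,\bigr| \;\longrightarrow\; 0
\]
for every fixed $\bm\in\NN^q$; then averaging over $\cS^{(n)}$ (and using that the maximum dominates the average) yields the claim, regardless of $|\cS^{(n)}|$.

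Second, I would chain together the four comparison propositions through an intermediate Gaussian matrix $\tW$ having the same variance and correlation profile as $W$. Let $(\tbz^t)$ denote the non-backtracking sequence built from $\tW$, and $(\bs{y}^t)$ the non-backtracking sequence built from independent Gaussian copies $(W^t)$ of $\tW$. For every fixed $i$ and $\bm$,
\begin{align*}
\bigl|\EE(\bcx_i^{t})^{\bm} - \EE(U_i^{t})^{\bm}\bigr|
&\;\le\; \bigl|\EE(\bcx_i^{t})^{\bm} - \EE(\bz_i^{t})^{\bm}\bigr|
 + \bigl|\EE(\bz_i^{t})^{\bm} - \EE(\tbz_i^{t})^{\bm}\bigr| \\
&\quad + \bigl|\EE(\tbz_i^{t})^{\bm} - \EE(\bs{y}_i^{t})^{\bm}\bigr|
 + \bigl|\EE(\bs{y}_i^{t})^{\bm} - \EE(U_i^{t})^{\bm}\bigr|.
\end{align*}
The first term is $\mathcal O(K_n^{-1/2})$ uniformly in $i$ by Proposition~\ref{prop:xtzt}; the second is $\mathcal O(K_n^{-1/2})$ by the universality result Proposition~\ref{prop:ztzttilde}; the third is $\mathcal O(K_n^{-1/2})$ by Proposition~\ref{prop:ztyt} applied to the Gaussian matrix $\tW$; and the fourth tends to $0$ uniformly in $i$ by Proposition~\ref{prop:ytut}, which requires precisely that the underlying matrix be Gaussian (hence the need to route through $\tW$). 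Since $K_n\to\infty$ by Assumption A-\ref{ass:nu}, the right-hand side tends to $0$ uniformly in $i$.

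There is no genuine obstacle here: the proposition is a ``wrap-up'' statement, and the entire combinatorial content has already been absorbed in the four propositions above. The only point that deserves attention is the necessity of interposing the auxiliary Gaussian matrix $\tW$: Proposition~\ref{prop:ytut} is stated under a Gaussian hypothesis on $W$, so one cannot jump directly from $\bs{y}^t$ (built from $W$) to $U^t$ without first invoking the universality of the moments provided by Proposition~\ref{prop:ztzttilde}. Once this is observed, the conclusion follows at once from the triangle inequality, linearity of $\EE$, the uniform bound on the coefficients $\beta_n(\bm,\ell)$, and the inequality
\[
\frac{1}{|\cS^{(n)}|}\sum_{i\in\cS^{(n)}}\bigl|\EE\psi(\bcx_i^{t},i) - \EE\psi(U_i^{t},i)\bigr|
\;\le\; \max_{i\in[n]}\bigl|\EE\psi(\bcx_i^{t},i) - \EE\psi(U_i^{t},i)\bigr| \;\longrightarrow\; 0.
\]
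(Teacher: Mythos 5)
Your proof is correct and matches the paper's argument: the paper likewise states, just before Proposition~\ref{prop:xtut}, that ``combining the previous results'' gives $\max_{i\in[n]}|\EE(x_i^t)^{\bm}-\EE(U_i^t)^{\bm}|\to 0$, and then extends to polynomials by the triangle inequality and averages over $\cS^{(n)}$. Your chaining of Propositions~\ref{prop:xtzt}, \ref{prop:ztzttilde}, \ref{prop:ztyt} (applied to the Gaussian surrogate $\tW$), and \ref{prop:ytut}, and your remark that $\tW$ must be interposed because Proposition~\ref{prop:ytut} assumes Gaussianity, is precisely the reasoning the paper's outline sketches.
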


Finally, in order to get the convergence in probability stated in Theorem~\ref{thm:amp-vec}, we  only need to show that the following variance
\begin{equation}
\label{varconv}
    \mathbb{V}ar\left[\frac{1}{K_n} \sum_{i\in S^{(n)}} \psi(x_i^t, i)\right]\longrightarrow 0
\end{equation}
converges to zero. The proof of this convergence is similar to the proof of \cite[Proposition 17]{HACHEM2024104276} and thus is be omitted.

The proof of Theorem~\ref{thm:amp-vec} follows then from Proposition~\ref{prop:xtut} and the convergence in \eqref{varconv}.

\section{AMP with general activation functions and non-zero diagonal matrix}
\label{sec:generalActiv}

\subsection{AMP for general activation functions}
\label{subsec:generalActiv}

Now that we have proved the AMP convergence result for polynomial activation functions in Theorem~\ref{thm:amp-pscal}, we can generalize this result for non polynomial activation functions by approximation arguments. In other words we  complete the proof of our main Theorem~\ref{thm:main} still assuming that the matrix model has a zero diagonal ($X_{ii}=0$).

We start this section with an approximation of the activation function $h$ by polynomials in order to use the convergence results of polynomial AMP.
\begin{lemma}
\label{lem:polyApprox_e}
    Let $h$ be an activation function satisfying A-\ref{ass:activation_function} and let $\left(Z^1, \cdots, Z^t\right)\sim \DE\left(S,h, \bs{x}^0,  t\right)$. Let $e>0$ be a (small) real number, then there exists a set of functions $(p_e(\cdot,\cdot,t))_{t=1}^{t_{\max}}$ such that for each $\eta\in \cQ_{\eta}$, $p_e(.,\eta, t)$ is a polynomial and 
    \begin{equation*}
         \EE\left(h(Z_i^t, \eta_i, t) - p_e(Z_i^t, \eta_i, t)\right)^2 \leq e
        \quad \text{and} \quad \left|\EE\left( \partial h( Z_i^t, \eta_i, t) - \partial p_e(Z_i^t, \eta_i, t)\right)\right| \leq e,
    \end{equation*}
    for $t=0, \cdots, t_{\max}$ with the convention that $Z^0=\bs{x}^0$ deterministic. In addition, let $\chR_i^{t_{\max}}$ be the covariance matrix of the $i$-th row of $\left(\bs{\cZ}^1, \cdots, \bs{\cZ}^t\right)\sim \DE\left(p_e, \bs{x}^0, S, t\right)$, then there exists $\delta(e)$ such that $\delta(e)\to 0$ when $e\to 0$ and 
    $$ \lVert R_i^{t_{\max}} - \chR_i^{t_{\max}} \rVert \leq \delta(e), \quad \forall i \in [n]. $$
\end{lemma}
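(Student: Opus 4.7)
My plan is to proceed by finite induction on $t = 0, 1, \ldots, t_{\max}$, constructing the approximating polynomials $p_e(\cdot, \cdot, t)$ one step at a time so that the induced DE covariance matrices $\chR_i^t$ stay close to the original $R_i^t$. Both displayed inequalities will derive from a single uniform $L^2$ approximation of $h$ by $p_e$, upgraded to the derivative bound via a Stein / integration-by-parts identity.

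At stage $t$, suppose $p_e(\cdot, \cdot, s)$ has been built for $s < t$, so that $\chR_i^t[t,t] \le \sigma_{\max}^2$ uniformly in $i$ (the corresponding upper bound on $R_i^t[t,t]$ follows from A-\ref{ass:S} and the Lipschitz growth of $h$, and is inherited by $\chR_i^t$). By A-\ref{ass:activation_function}, $h(\cdot, \cdot, t)$ is continuous on the compact box $[-M, M] \times \cQ_\eta$, and Stone--Weierstrass yields a bivariate polynomial $p(x, \eta)$ with $\|h - p\|_\infty \le e'$ there for any prescribed $e' > 0$. Setting $p_e(x, \eta, t) := p(x, \eta)$, which is a polynomial in $x$ whose coefficients are uniformly bounded polynomials of $\eta \in \cQ_\eta$, and splitting $\EE(h - p_e)^2$ on $\{|Z| \le M\}$ and $\{|Z| > M\}$, the first piece is $\le (e')^2$ while the second is also $\le (e')^2$ for $M$ large enough (using the at-most-linear growth of $h$ and the polynomial growth of $p_e$ against the Gaussian tail of $Z_i^t \sim \cN(0, \chR_i^t[t,t])$). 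For the derivative bound, the identity $\EE \partial g(Z) = \sigma^{-2} \EE[Z g(Z)]$ applied with $g = h - p_e$ and Cauchy--Schwarz give $|\EE \partial h - \EE \partial p_e| \le \sqrt{2}\, e'/\sigma$. A uniform lower bound $\sigma \ge \sigma_{\min} > 0$ on $\chR_i^t[t,t]$ is extracted from A-\ref{ass:nonDegen} and the DE recursion: the integral of $h^2(\cdot, \eta, t-1)$ over $[-D_h(t-1), D_h(t-1)]$ is bounded below, and since the density of $Z_j^{t-1}$ is bounded below on that interval (thanks to $\sigma \le \sigma_{\max}$), one gets a lower bound on $\EE h^2(Z_j^{t-1}, \eta_j, t-1)$ and hence on $R_i^t[t,t]$. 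Choosing $e'$ sufficiently small forces both required inequalities.

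For the third claim on covariances, I argue by induction that $\|R_i^t - \chR_i^t\| \le \delta_{t-1}(e')$ with $\delta_s(e') \to 0$ as $e' \to 0$. The base $t = 1$ follows from the uniform bound $|h^2 - p_e^2| \le C e'$ on $\cQ_x \times \cQ_\eta$ and $\sum_j s_{ij} \le C_{\mathrm{card}} C_S$. For the inductive step, each entry of $R_i^{t+1} - \chR_i^{t+1}$ is a weighted sum $\sum_j s_{ij}$ of differences $\EE_R[h \cdot h] - \EE_{\chR}[p_e \cdot p_e]$ (with $\EE_R$ denoting expectation under $\vec Z_j^t \sim \cN(0, R_j^t)$), which decomposes into a \emph{change-of-law} term $(\EE_R - \EE_{\chR})[h \cdot h]$, controlled by a Gaussian coupling of $\cN(0, R_j^t)$ and $\cN(0, \chR_j^t)$ together with Lipschitzness of $h$ (yielding a factor proportional to $\|R_j^t - \chR_j^t\|^{1/2}$), and a \emph{change-of-function} term $\EE_{\chR}[h \cdot h - p_e \cdot p_e]$, controlled by Cauchy--Schwarz and the $L^2$ bound of the previous paragraph. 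Summing over $j$ against $\sum_j s_{ij} \le C$ closes the induction and gives $\delta_{t_{\max}}(e') \to 0$ as $e' \to 0$.

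The main obstacle is uniformity in $i \in [n]$: the variances $R_i^t$ and parameters $\eta_i$ vary with $i$, yet we need a single polynomial $p_e(\cdot, \cdot, t)$, independent of $i$, achieving the approximation for every $i$ simultaneously. This is precisely what the joint Stone--Weierstrass approximation on the product compact $[-M, M] \times \cQ_\eta$ delivers; in parallel we need uniform upper \emph{and} lower bounds on the variances -- the former to tame the Gaussian tail in the $L^2$ estimate, the latter to keep Stein's constant $1/\sigma$ bounded in the derivative estimate.
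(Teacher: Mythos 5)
Your overall architecture is sound and the derivative estimate (Stein / integration by parts) and the covariance comparison (change-of-law via a Gaussian coupling and $1/2$-H\"older continuity of the matrix square root, plus a change-of-function term controlled by Cauchy--Schwarz) match the paper's route, which invokes Lemma~\ref{lem:polyApprox} together with Lemma~\ref{lem:boundVariance} and then reuses the mechanism of Lemma~\ref{lem:RRtilde}. However, your first step --- deducing the $L^2$ bound from Stone--Weierstrass on $[-M,M]\times\cQ_\eta$ plus a tail split --- has a real circularity. You must fix $M$ \emph{before} applying Stone--Weierstrass, since $M$ defines the compact set; but the tail piece $\EE\bigl[(h-p_e)^2\,\mathbf{1}_{|Z|>M}\bigr]$ involves $\EE\bigl[p_e(Z)^2\,\mathbf{1}_{|Z|>M}\bigr]$, and the degree and coefficients of $p_e$ blow up as $e'\to 0$ (the sup-norm bound on $[-M,M]$ gives no control outside that interval: a degree-$d$ polynomial bounded by $K$ on $[-M,M]$ can grow like $K(2|x|/M)^d$ for $|x|>M$). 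So ``the second piece is also $\le (e')^2$ for $M$ large enough'' does not close: increasing $M$ changes $p_e$, which changes the tail, and so on. The paper sidesteps this entirely by working directly with the density of polynomials in $L^2(\cN(0,\sigma_{\max}^2))$ (which is what Lemma~\ref{lem:polyApprox} does), after which the change-of-variable inequality $\EE\,\varphi(\sigma\xi)^2\le(\sigma_{\max}/\sigma_{\min})\,\EE\,\varphi(\sigma_{\max}\xi)^2$ delivers uniformity in $\sigma\in[\sigma_{\min},\sigma_{\max}]$ at no extra cost, and a $\delta$-covering of $\cQ_\eta$ together with the modulus $\kappa$ handles uniformity in $\eta$. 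You should replace your Stone--Weierstrass step by this $L^2$-density argument.

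Two smaller points. First, the two displayed bounds in the lemma are about the \emph{original} DE variables $Z_i^t$ (variance $R_i^t(t,t)$, bounded above and below by Lemma~\ref{lem:boundVariance}); your text refers to $\cN(0,\chR_i^t[t,t])$ and to a lower bound on $\chR_i^t[t,t]$ extracted from A-\ref{ass:nonDegen}, which mixes up the original and the approximating DE --- A-\ref{ass:nonDegen} concerns $h$, not $p_e$, and the non-degeneracy you actually need here is that of $R_i^t$, not $\chR_i^t$. This also means no induction is needed to build the polynomials: all $p_e(\cdot,\cdot,t)$ can be chosen upfront from the $R$-side variances, and only the comparison $\|R_i^t-\chR_i^t\|\le\delta(e)$ requires induction in $t$. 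Second, once the $L^2$-density route is used, your Stein step is correct (but note you are applying it to a Lipschitz, not $C^1$, function, which is fine but worth saying).
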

In order to prove this lemma, we need to show that the variances of $Z_i^t$ are bounded away from zero. To that end, we use Assumptions A-\ref{ass:initial_point}, A-\ref{ass:activation_function} and A-\ref{ass:nonDegen}.

\begin{lemma}
    \label{lem:boundVariance}
    Let $S$ be a matrix satisfying A-\ref{ass:S}, $\bs{x}^0$ an $n$-dimensional vector satisfying A-\ref{ass:initial_point}, $h$ a function satisfying A-\ref{ass:activation_function} and A-\ref{ass:nonDegen}. Following the notations of Definition~\ref{def:de} let $\left(\bs{Z}^1, \cdots, \bs{Z}^t\right) \sim \DE\left(h, \bs{x}^0, S, t\right)$ and recall the definition of the covariance matrix $R_i^t\in\RR^{t\times t}$. Then for every $t\in \NN$ there exist two constant $C = C(t)>0$ and $c = c(t) >0$ such that 
    \begin{enumerate}
        \item The spectral norms of the covariance matrices are bounded
        $$\forall n\in \NN, \ \forall i\in [n], \quad  \lVert R_i^t \rVert \leq C. $$
        \item The variances of $Z_i^t$ are bounded away from zero
        $$ \forall n\in \NN, \ \forall i\in [n], \quad R_i^t(t,t) \geq c. $$
    \end{enumerate}
\end{lemma}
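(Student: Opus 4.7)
The plan is to prove both assertions simultaneously by induction on $t$, since lower-bounding the variance at step $t+1$ will require a two-sided control on the variance at step $t$. For the base case $t=1$, the scalar $R_i^1 = \sum_{j\in[n]} s_{ij}\, h^2(x_j^0, \eta_j, 0)$ is handled directly: by A-\ref{ass:activation_function} (Lipschitz in $x$, modulus of continuity in $\eta$) together with A-\ref{ass:initial_point} (the points $(x_j^0, \eta_j)$ lie in the compact set $\cQ_x \times \cQ_\eta$), the quantity $h^2(x_j^0, \eta_j, 0)$ is uniformly bounded above, and combining with the row-sum bound $\sum_j s_{ij} \le C_S C_{\text{card}}$ from A-\ref{ass:S} yields $R_i^1 \le C(1)$. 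For the matching lower bound, A-\ref{ass:nonDegen}(1) gives $h^2(x_j^0, \eta_j, 0) \ge c$ and A-\ref{ass:S} gives $\sum_j s_{ij} \ge c_S$, whence $R_i^1 \ge c\, c_S$.

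For the inductive step, assume $\|R_j^s\| \le C(s)$ and $R_j^s(s,s) \ge c(s)$ for all $s \le t$ and all $j \in [n]$. To propagate part (1), I first observe that A-\ref{ass:activation_function} combined with boundedness of $\eta \mapsto h(0,\eta,s)$ on $\cQ_\eta$ (via the modulus of continuity) yields a bound of the form $h^2(x, \eta, s) \le A_s + B_s x^2$ uniformly in $\eta \in \cQ_\eta$. Since $Z_j^s \sim \cN(0, R_j^s(s,s))$ has variance at most $C(s)$, each diagonal entry $\EE h^2(Z_j^s, \eta_j, s)$ of the PSD matrix $H_j^t$ is bounded; hence $\|H_j^t\| \le \tr(H_j^t)$ is uniformly bounded, and summing against $s_{ij}$ together with A-\ref{ass:S} produces $\|R_i^{t+1}\| \le C(t+1)$.

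To propagate part (2), focus on the diagonal entry
\[
R_i^{t+1}(t+1, t+1) \;=\; \sum_{j\in[n]} s_{ij}\, \EE h^2(Z_j^t, \eta_j, t),
\]
where $Z_j^t \sim \cN(0, \sigma_j^2)$ with $c(t) \le \sigma_j^2 \le C(t)$ by the induction hypothesis. Restricting the Gaussian expectation to the interval $[-D_h(t), D_h(t)]$ supplied by A-\ref{ass:nonDegen}(2) and lower-bounding the density there by $(2\pi C(t))^{-1/2} \exp\!\bigl(-D_h(t)^2 / (2 c(t))\bigr)$, one obtains $\EE h^2(Z_j^t, \eta_j, t) \ge \gamma(t)$ for an explicit $\gamma(t) > 0$ depending only on $c_h(t), D_h(t), c(t), C(t)$. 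Combining with the row-sum lower bound $\sum_j s_{ij} \ge c_S$ from A-\ref{ass:S} gives $R_i^{t+1}(t+1, t+1) \ge c_S\, \gamma(t)$, closing the induction with $c(t+1) := c_S\, \gamma(t)$.

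The main obstacle is precisely what forces the induction to be simultaneous: the lower bound on $\EE h^2(Z_j^t, \eta_j, t)$ requires the variance $\sigma_j^2$ to be controlled \emph{on both sides} at the previous level, bounded from below (to keep $\exp(-D^2/(2\sigma^2))$ away from zero) and from above (to keep the normalization $(2\pi\sigma^2)^{-1/2}$ away from zero on the integration window). Neither statement can be proved in isolation, so the two inductive hypotheses must be carried together; beyond this point the argument reduces to routine manipulations using the assumptions already in place.
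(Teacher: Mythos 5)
Your proof is correct and follows essentially the same route as the paper's for part (2): lower-bound $\EE\,h^2(\sigma_j\xi,\eta_j,t)$ by restricting the Gaussian integral to $[-D_h(t),D_h(t)]$, using $\sigma_j^2\le C(t)$ to control the normalization $(2\pi\sigma_j^2)^{-1/2}$ and $\sigma_j^2\ge c(t)$ to control $\exp(-D^2/(2\sigma_j^2))$, then invoke A-\ref{ass:nonDegen}(2) and $\sum_\ell s_{i\ell}\ge c_S$. The one place you go further is part (1): the paper simply refers the reader to \cite{HACHEM2024104276} and omits the argument, whereas you supply it explicitly via the quadratic growth bound $h^2(x,\eta,s)\le A_s+B_s x^2$ (which indeed follows from the Lipschitz property and the modulus of continuity on the compact $\cQ_\eta$), the PSD bound $\|H_j^t\|\le\tr(H_j^t)$, and the row-sum upper bound $\sum_j s_{ij}\le C_S C_{\text{card}}$ from A-\ref{ass:S}. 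That is a legitimate and self-contained filling of the paper's omission. One small imprecision in your framing: you claim neither part can be proved in isolation, but your own proof of part (1) uses only the upper-bound induction hypothesis and never touches part (2); it is only part (2) that genuinely needs both sides. So the joint induction is a convenient organization rather than a logical necessity, as the paper's presentation (part (1) first, then part (2) using it) makes clear.
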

The proof of this technical lemma is given in Appendix~\ref{app:boundvariance}.
The proof of the first part of Lemma~\ref{lem:polyApprox_e} relies on the
polynomial density Lemma~\ref{lem:polyApprox} and the fact that the variances
of $Z_i^t$ are bounded from above and also bounded away from zero which is
detailed in Lemma~\ref{lem:boundVariance}. The second part uses the same proof
technique described in the proof of Lemma~\ref{lem:RRtilde}.  An immediate
consequence of this approximation is that the covariance matrices
$\chR_i^{t_{\max}}$ are also bounded. 

Let $(\bs{\cx}^t)$ the AMP sequence considered in Theorem~\ref{thm:amp-pscal}. The following lemma allows us to replace the ``random" formulation of the Onsager term by a deterministic equivalent, i.e. 
$$\diag \Bigl(W\odot W^\top \partial p_e(\bs{\cx}^{t},\cdot,t) \Bigr) \qquad \text{with} \qquad \diag \Bigl(V\partial p_e(\bs{\cx}^{t},\cdot,t) \Bigr).$$

\begin{lemma}
\label{lem:Onsagerswitch}
    For each $t\in \mathbb{N}$ there exists a constant $C$ that does not depend on $n$ such that:
    \begin{equation*}
        \mathbb{E}\left[ \left(\sum_{j\in [n]} \left(W_{ij}W_{ji} - V_{ij}\right)\partial p_e(\cx^t_j, \eta_j, t) \right)^4  \right] \leq C/K_n^2 \quad \text{for all } i\in [n].
    \end{equation*}

    where $V_{ij} = \tau_{ij} \sqrt{s_{ij} s_{ji}} = \mathbb{E}\left[W_{ij}W_{ji}\right]$.
\end{lemma}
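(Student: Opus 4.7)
Fix $i\in[n]$ and abbreviate $Y_j := W_{ij}W_{ji} - V_{ij}$, $a_j := \partial p_e(\cx^t_j,\eta_j,t)$, and $S := \sum_{j\in[n]} Y_j\,a_j$. The argument rests on four structural facts. (a) By Definition~\ref{def:corr_matrix}, the pairs $\{(W_{ij},W_{ji})\}_{j\neq i}$ are mutually independent, so $\{Y_j\}_{j\neq i}$ is a family of independent centered random variables. (b) $Y_j=0$ unless $s_{ij}s_{ji}>0$, so the sum effectively ranges over $\mathcal{J}_i := \{j : s_{ij}s_{ji}>0\}$ of cardinality at most $C_{\textup{card}}K_n$ by A-\ref{ass:S}. (c) From $|W_{ij}W_{ji}| = \sqrt{s_{ij}s_{ji}}\,|X_{ij}X_{ji}|$ together with A-\ref{ass:X} and A-\ref{ass:S}, $\EE|Y_j|^k \leq C_k K_n^{-k}$ for every $k\geq 2$. (d) Since $\partial p_e(\cdot,\eta,t)$ is a polynomial of bounded degree with uniformly bounded coefficients and $\sup_{n,j}\EE|\cx^t_j|^m < \infty$ for every $m$ by~\eqref{mom-cx}, one has $\sup_{n,j}\EE|a_j|^k < \infty$ for every $k$.

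Were $\{a_j\}$ independent of $\{Y_j\}$, Rosenthal's inequality for sums of independent centered variables would yield $\EE S^4 = O(K_n^{-2})$ directly. The genuine difficulty is that $\cx^t_j$ depends on the whole matrix $W$, hence on $(W_{ij},W_{ji})$, so $a_j$ and $Y_j$ are coupled. I propose to resolve this by a leave-one-out decoupling. Let $\tilde W$ be obtained from $W$ by setting every entry of the $i$-th row and $i$-th column to zero, and let $\tilde{\cx}^t_\ell$ denote the iterate \eqref{amp-posc} computed with $\tilde W$ in place of $W$; put $\tilde a_j := \partial p_e(\tilde{\cx}^t_j,\eta_j,t)$. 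Then the entire family $\{\tilde a_j\}_{j\in[n]}$ is measurable with respect to the $\sigma$-algebra $\mathcal{G}$ generated by $\{W_{k\ell} : k\neq i,\,\ell\neq i\}$, hence independent of $\{Y_j\}_{j\in\mathcal{J}_i}$. A short induction on $t$ exploits the fact that, because $p_e$ is polynomial, every monomial in the expansion of $\cx^t_j - \tilde{\cx}^t_j$ must carry at least one factor of a removed entry $W_{i\ell}$ or $W_{\ell i}$, whose $k$-th moment is $O(K_n^{-k/2})$; combined with bounded moments of the remaining factors (via \eqref{mom-cx}), this yields
\begin{equation*}
\EE|\cx^t_j - \tilde{\cx}^t_j|^k \,\leq\, C_k\,K_n^{-k/2}, \qquad j\in\mathcal{J}_i,
\end{equation*}
and, through Cauchy--Schwarz and the polynomial growth of $\partial p_e$, $\EE|a_j - \tilde a_j|^k \leq C_k K_n^{-k/2}$.

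Decompose $S = S_1 + S_2$ with $S_1 := \sum_j Y_j\,\tilde a_j$ and $S_2 := \sum_j Y_j(a_j - \tilde a_j)$. Conditioning on $\mathcal{G}$, Rosenthal's inequality applied to the independent centered $Y_j$'s weighted by the $\mathcal{G}$-measurable $\tilde a_j$'s gives
\begin{equation*}
\EE[S_1^4 \mid \mathcal{G}] \,\leq\, C\Bigl(\sum_{j}\tilde a_j^{\,2}\,\EE Y_j^2\Bigr)^{\!2} + C\sum_{j} \tilde a_j^{\,4}\,\EE Y_j^4.
\end{equation*}
Taking expectations and using $\EE Y_j^2 \leq C K_n^{-2}$, $\EE Y_j^4 \leq C K_n^{-4}$, $|\mathcal{J}_i|\leq C K_n$, and the uniform moment bounds on $\tilde a_j$, both terms are $O(K_n^{-2})$. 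For $S_2$, Minkowski's inequality in $L^4$ combined with Cauchy--Schwarz at the level of each summand gives
\begin{equation*}
\bigl(\EE|S_2|^4\bigr)^{1/4} \,\leq\, \sum_{j\in\mathcal J_i}(\EE Y_j^8)^{1/8}(\EE|a_j - \tilde a_j|^8)^{1/8} \,\leq\, C K_n \cdot \frac{C}{K_n}\cdot\frac{C}{K_n^{1/2}} \,=\, O\bigl(K_n^{-1/2}\bigr),
\end{equation*}
so $\EE|S_2|^4 = O(K_n^{-2})$. Combining the two estimates yields $\EE S^4 = O(K_n^{-2})$, which is the claim.

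The hard part will be the inductive verification of the leave-one-out moment bound on $\cx^t_j - \tilde{\cx}^t_j$: the recursion must be tracked through both the matrix--vector product and the Onsager correction in \eqref{amp-posc}, and one has to keep the $K_n^{-1/2}$ scale despite the sums over up to $C K_n$ indices at each step. The polynomial form of $p_e$ and the already established uniform moment bound \eqref{mom-cx} are precisely what make this induction close.
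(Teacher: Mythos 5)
Your proposal takes a genuinely different route from the paper: the paper expands the fourth moment directly and controls it by the same tree-indexing combinatorics used throughout Section~\ref{sec:combinatorics} (each $\partial p_e(\cx^t_j,\eta_j,t)$ is written as a sum over trees via Lemma~\ref{lem:xtzt}, the centering of $W_{ij}W_{ji}-V_{ij}$ forces each $i\leftrightarrow j_\ell$ edge to be covered at least three times in the merged graph, and an Euler-type count gives the $K_n^{-2}$ scaling), whereas you propose a leave-one-out decoupling that separates the randomness in $\{(W_{ij},W_{ji})\}_j$ from the iterates and then invokes a conditional Rosenthal inequality. The independence structure you identify is correct (the pairs are mutually independent and independent of the $\mathcal{G}$-measurable $\tilde a_j$), the Rosenthal step is carried out correctly, and the split $S=S_1+S_2$ together with the $L^4$-triangle bound for $S_2$ are consistent. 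If the leave-one-out moment bound $\EE|\cx^t_j-\tilde\cx^t_j|^k\leq C_k K_n^{-k/2}$ were available, your argument would close.

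The gap is precisely that bound, and the heuristic you offer for it does not suffice. Unfolding the recursion \eqref{amp-posc}, the difference at step $t+1$ contains the term $\sum_{\ell\neq i} W_{j\ell}\bigl(p_e(\cx^t_\ell)-p_e(\tilde\cx^t_\ell)\bigr)$ (plus an analogous Onsager difference). This is a sum over up to $C K_n$ indices $\ell\in\cK_j\cap\cK^i$, and by the inductive hypothesis each summand has $L^k$ norm $O(K_n^{-1})$, so a Minkowski/triangle bound yields only $O(1)$ rather than the required $O(K_n^{-1/2})$. The observation that ``every monomial carries at least one removed factor'' identifies the small factor but not the cancellation: the summands are neither independent nor centered conditionally on anything convenient (each $\cx^t_\ell-\tilde\cx^t_\ell$ depends on essentially the whole matrix, including $W_{j\ell}$ itself), so a Rosenthal-type improvement cannot be invoked at this inner level without further work. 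To make the gain of $K_n^{-1/2}$ over the naive bound rigorous one would need either a second-layer decoupling cascade, a martingale-difference decomposition of the perturbation, or, most likely, a combinatorial expansion of the monomials in $\cx^t_j-\tilde\cx^t_j$ with an edge-count of exactly the type the paper already performs in Appendix~\ref{app:Onsagerswitch} and Proposition~\ref{prop:ztzttilde}. In other words, the leave-one-out reformulation is clean but pushes the hard combinatorics into the inductive lemma rather than avoiding it; as written the proof has a genuine hole at its central step, which you flag but do not fill.
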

The proof of this lemma is provided in Appendix~\ref{app:Onsagerswitch}.

The following lemma gives the desired comparison of two sequences $(\bs{x}^t)$ and $(\bs{\cx}^t)$ defined by 
\begin{equation}
\label{eq:2sequences}
    (\bs{x}^t) = \AMPZ\left(X, S, h, \bs{x}^0, \bs{\eta}\right) \quad \text{and} \quad (\bs{\cx}^t) = \AMPW\left(X, S, p_e, \bs{x}^0, \bs{\eta}\right),
\end{equation}
where $p_e$ is the polynomial approximation of the function $h$ by an error margin $e$ in the sense of Lemma~\ref{lem:polyApprox_e}.

\begin{lemma}
\label{lem:ampComparison}
    Fix $t_{\max}>0$. Let $(\bs{x}^t)$ and $(\bs{\cx}^t)$ be two AMP sequences defined as in Eq.~\eqref{eq:2sequences}, then there exists $\delta(e)\to 0$ as $e\to 0$ such that the following holds for each $t=1, \cdots, t_{\max}$,
    \begin{equation*}
        \lVert \bs{x}^t - \bs{\cx}^t \rVert_n \leq \delta(e) + o_{\mathbb{P}}(1)\quad \text{and} \quad \lVert h(\bs{x}^t) - p_e(\bs{\cx}^t) \rVert_n \leq \delta(e) + o_{\mathbb{P}}(1),
    \end{equation*}
    where $o_{\mathbb{P}}(1) \toprobalong 0$.
\end{lemma}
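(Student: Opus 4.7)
The plan is to proceed by induction on $t$ and establish both claims simultaneously. The key ingredients are: the spectral norm bound $\|W\|=O_{\PP}(1)$ from Proposition~\ref{prop:spectralNormBound}, the Lipschitz continuity of $h$ from A-\ref{ass:activation_function}, the $L^2$ approximation and covariance mismatch estimates of Lemma~\ref{lem:polyApprox_e}, the polynomial-AMP concentration provided by Theorem~\ref{thm:amp-pscal} and Lemma~\ref{lem:polyActivation} along $(\bs{\cx}^t)$, and the replacement of $W\odot W^\top$ by $V$ in the Onsager term via Lemma~\ref{lem:Onsagerswitch}.

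The base case $t=1$ is immediate: both Onsager corrections vanish, so $\bs{x}^1-\bs{\cx}^1=W[h(\bs{x}^0,\bs{\eta},0)-p_e(\bs{x}^0,\bs{\eta},0)]$, and since $\bs{x}^0$ is deterministic the pointwise bound $(h-p_e)^2(x_i^0,\eta_i,0)\le e$ from Lemma~\ref{lem:polyApprox_e} gives $\|\bs{x}^1-\bs{\cx}^1\|_n\le\|W\|\sqrt{e}$. For the induction step, write
\begin{equation*}
\bs{x}^{t+1}-\bs{\cx}^{t+1}=W\bigl[h(\bs{x}^t,\bs{\eta},t)-p_e(\bs{\cx}^t,\bs{\eta},t)\bigr]-\bigl[\mathrm{ONS}_t^Z-\mathrm{ONS}_t^W\bigr].
\end{equation*}
The first summand is dominated by $\|W\|\cdot\|h(\bs{x}^t)-p_e(\bs{\cx}^t)\|_n$, which the second part of the induction hypothesis bounds by $\delta(e)+o_{\PP}(1)$. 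The Onsager difference is split by inserting the auxiliary quantity $\diag(V\,\EE\partial h(\bs{Z}^t,\bs{\eta},t))\,p_e(\bs{\cx}^{t-1},\bs{\eta},t-1)$.

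The first resulting piece, $\diag(V\,\EE\partial h(\bs{Z}^t))[h(\bs{x}^{t-1})-p_e(\bs{\cx}^{t-1})]$, has a uniformly bounded diagonal, since $|V_{ij}|\le\sqrt{s_{ij}s_{ji}}\le C/K_n$ with nonzero support of size at most $CK_n$ by A-\ref{ass:S}, and $|\EE\partial h|$ is bounded by $L$; hence it is controlled by the induction hypothesis on the first claim. The second piece, after absorbing $\|p_e(\bs{\cx}^{t-1})\|_n=O_{\PP}(1)$ via Theorem~\ref{thm:amp-pscal} applied to $p_e^2$, reduces to the $\ell^2_n$-norm of the diagonal vector, which I would decompose into three contributions: (a) $\sum_j V_{ij}[\EE\partial h(Z_j^t)-\EE\partial p_e(Z_j^t)]$, bounded pointwise in $i$ by $Ce$ thanks to Lemma~\ref{lem:polyApprox_e} and $\sum_j|V_{ij}|\le C$; (b) $\sum_j V_{ij}[\EE\partial p_e(Z_j^t)-\partial p_e(\cx_j^t)]$, viewed as a $1/K_n$-weighted sparse empirical average and handled by~\eqref{cvg-small} applied with weights $\beta_j^{(n)}=K_nV_{ij}$ on the support $\cS^{(n)}=\{j:V_{ij}\ne 0\}$; and (c) $\sum_j[V_{ij}-W_{ij}W_{ji}]\partial p_e(\cx_j^t)$, of order $1/\sqrt{K_n}$ pointwise by Lemma~\ref{lem:Onsagerswitch}.

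The second assertion follows from $\|h(\bs{x}^{t+1})-p_e(\bs{\cx}^{t+1})\|_n\le L\|\bs{x}^{t+1}-\bs{\cx}^{t+1}\|_n+\|h(\bs{\cx}^{t+1})-p_e(\bs{\cx}^{t+1})\|_n$: the first term has just been bounded, and the second is handled by applying Lemma~\ref{lem:polyActivation} to the continuous, polynomially growing function $(\alpha,u)\mapsto(h(u,\alpha,t+1)-p_e(u,\alpha,t+1))^2$, transferring the empirical average to $\EE(h-p_e)^2(\cZ_i^{t+1})$, and then using the covariance comparison $\|R_i^{t+1}-\chR_i^{t+1}\|\le\delta(e)$ from Lemma~\ref{lem:polyApprox_e} together with the polynomial growth of $(h-p_e)^2$ to match with $\EE(h-p_e)^2(Z_i^{t+1})\le e$. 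The main obstacle is contribution (b) above: \eqref{cvg-small} yields only pointwise-in-$i$ probabilistic smallness, while what is needed is smallness of the full $\ell^2_n$-norm of these scalars, which calls for a second-moment argument in the spirit of~\eqref{varconv} in the proof of Theorem~\ref{thm:amp-vec}, so that the concentration errors remain uniform as $i$ varies over $[n]$.
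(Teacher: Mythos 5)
Your proposal follows the paper's argument closely: induction on $t$, the triangle inequality combined with the spectral norm bound of Proposition~\ref{prop:spectralNormBound}, a decomposition of the Onsager difference into a piece absorbed by the induction hypothesis, an approximation error controlled by Lemma~\ref{lem:polyApprox_e}, a concentration term handled by \eqref{cvg-small}, and the $W\odot W^\top$-to-$V$ switch provided by Lemma~\ref{lem:Onsagerswitch}. Your closing remark about needing second-moment control to pass from pointwise smallness to smallness of the full $\ell^2_n$-norm is exactly the device the paper uses via the moment bounds~\eqref{mom-cx}.

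There is one bookkeeping gap in your contribution (b). To apply \eqref{cvg-small}, the deterministic comparison term must be $\EE\,\partial p_e(\cZ_j^t)$ --- the DE built from the polynomial $p_e$ --- not $\EE\,\partial p_e(Z_j^t)$ built from $h$. You therefore need an additional deterministic piece $\sum_j V_{ij}\bigl[\EE\,\partial p_e(Z_j^t)-\EE\,\partial p_e(\cZ_j^t)\bigr]$, bounded uniformly in $i$ by $C\delta(e)$ via the covariance comparison $\|R_i^t-\chR_i^t\|\le\delta(e)$ of Lemma~\ref{lem:polyApprox_e}: since $\partial p_e$ is a fixed polynomial and the covariances stay in a compact set (Lemma~\ref{lem:boundVariance}, \eqref{mom-cx}), its Gaussian expectation is Lipschitz in the covariance. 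This is precisely the role of the intermediate term $V\,\EE\,\partial p_e(\cZ^t)$ in the paper's four-way decomposition $\Delta^{(1)},\ldots,\Delta^{(4)}$, and it is the same covariance estimate you already invoke --- correctly --- in the final paragraph for the second assertion. With this correction, your decomposition is equivalent to the paper's.
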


Using this Lemma, we are now able to prove the AMP convergence result for general activation functions.
\begin{proof}[Proof of Theorem~\ref{thm:main} in the zero-diagonal case
]
Let $\varphi : \RR^{t_{\max}} \to \RR$ be a pseudo-Lipschitz function and denote $\bx_i = \left(x_i^1, \cdots, x_i^{t_{\max}}\right)^\top$ and $\bs{\cx}_i = \left(\cx_i^1, \cdots, \cx_i^{t_{\max}}\right)^\top$, without loss of generality we  omit the scalars $\beta_i$ and the parameters $\eta_i$ by considering that $\varphi$ depends also on the index $i$. We have
\begin{equation*}
    \frac{1}{n} \sum_{i\in [n]} \varphi(\bx_i) = \frac{1}{n} \sum_{i\in [n]} \left(\varphi(\bx_i) - \varphi(\bs{\cx}_i)\right) + \frac{1}{n} \sum_{i\in [n]} (\varphi(\bs{\cx}_i) - \varphi(\bs{\cZ}_i))+ \frac{1}{n} \sum_{i\in [n]} (\varphi(\bs{\cZ}_i) - \varphi(\bs{Z}_i)).
\end{equation*}
The pseudo-Lipschitz property of $\varphi$ implies that 
\begin{equation*}
\begin{split}
    \frac{1}{n} \left|\sum_{i\in [n]} \varphi(\bx_i) - \varphi(\bs{\cx}_i)\right| &\leq \frac{C}{n} \sum_{i\in [n]} \lVert \bx_i - \bs{\cx}_i\rVert \left(1 + \lVert \bx_i\rVert  + \lVert \bs{\cx}_i\rVert \right)\\
    &\leq C \left(\sum_{t=1}^{t_{\max}} \lVert \bs{x}^t - \bs{\cx}^t\rVert_n\right)\left(1+\sum_{t=1}^{t_{\max}}\lVert \bs{x}^t\rVert_n + \sum_{t=1}^{t_{\max}}\lVert \bs{\cx}^t\rVert_n\right)\,.\\
\end{split}
\end{equation*}
By Lemma~\ref{lem:ampComparison} we have $\sum_{t=1}^{t_{\max}} \lVert \bs{x}^t - \bs{\cx}^t\rVert_n \leq \delta(e) + o_{\mathbb{P}}(1)$, and by Theorem~\ref{thm:amp-pscal} applied to the test function $x\mapsto x^2$ we get $\lVert\bs{\cx}^t\rVert_n \leq C + o_{\mathbb{P}}(1)$ which also implies that $\lVert \bs{x}^t\rVert_n \leq C + o_{\mathbb{P}}(1)$, finally we have
$$\frac{1}{n} \left|\sum_{i\in [n]}\varphi(\bx_i) - \varphi(\bs{\cx}_i)\right| \leq \delta(e) + o_{\mathbb{P}}(1) .$$
By Theorem~\ref{thm:amp-pscal}, we have that 
$$\frac{1}{n} \sum_{i\in [n]} (\varphi(\bs{\cx}_i) - \varphi(\bs{\cZ}_i)) =o_{\mathbb{P}}(1) \,.$$
And finally by using Lemma~\ref{lem:polyApprox_e} we get 
$$\frac{1}{n} \left|\sum_{i\in [n]} \varphi(\bs{Z}_i) - \varphi(\bs{\cZ}_i) \right|\leq \delta(e), $$
which concludes the proof of our main theorem.
\end{proof}

In order to provide a comparison between the two AMP sequences in \eqref{eq:2sequences}, we need the boundedness of the spectral norm of $W$, a technical yet very important condition. This condition is enforced by A-\ref{ass:nu} that controls the sparsity level of the random matrix.
\begin{proposition}
\label{prop:spectralNormBound}
    Let A-\ref{ass:X}, A-\ref{ass:S} and A-\ref{ass:nu} hold true. Then the following bound holds true with probability one,
    \begin{equation*}
        \sup_{n\ge 1} \lVert W \rVert < \infty.
    \end{equation*}
\end{proposition}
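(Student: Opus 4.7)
The natural strategy is to reduce to a symmetric matrix via Hermitization and then run a trace/moment argument in the spirit of \cite{HACHEM2024104276}. First, I would introduce the $2n \times 2n$ symmetric matrix
$$
\sH = \begin{pmatrix} 0 & W \\ W^\top & 0 \end{pmatrix},
$$
which satisfies $\|\sH\| = \|W\|$. One may write $\sH = \widetilde S^{\odot 1/2} \odot \widetilde X$ with
$$
\widetilde S = \begin{pmatrix} 0 & S \\ S^\top & 0 \end{pmatrix},
$$
and this symmetric profile inherits, up to absolute constants, every ingredient of A-\ref{ass:S}: each row has $\mathcal O(K_n)$ nonzero entries of magnitude at most $C_S/K_n$. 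The entries of $\widetilde X$ still satisfy the moment bounds of A-\ref{ass:X}, and the only remaining dependence is that the pair $(\sH_{i,n+j}, \sH_{j,n+i}) = (W_{ij}, W_{ji})$ has covariance $\tau_{ij}\sqrt{s_{ij}s_{ji}}$, with independence across distinct such pairs and independence of the diagonal entries.

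Second, I would apply the trace/moment method to $\sH$. For any integer $p \ge 1$, Markov's inequality yields
$$
\PP(\|W\| > \kappa) \ \le\ \kappa^{-2p}\, \EE\, \tr(\sH^{2p}) \ =\ \kappa^{-2p}\sum_{i_1,\ldots,i_{2p}\in [2n]}\EE\bigl[\sH_{i_1 i_2}\sH_{i_2 i_3}\cdots \sH_{i_{2p} i_1}\bigr].
$$
Each summand corresponds to a closed walk of length $2p$ on the vertex set $[2n]$. Since entries of $\widetilde X$ are centered and the only dependencies are the correlated pairs described above, a summand can be nonzero only if every edge of the walk is either traversed at least twice, or is paired with its reverse edge (via the correlation profile) which is itself traversed at least once.

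Third, I would carry out the combinatorial enumeration parallel to the sparse symmetric case of \cite{HACHEM2024104276}. Fixing the skeleton graph $G$ of a contributing walk, with $v$ distinct vertices and $e$ distinct unordered edges of multiplicities $k_1,\ldots,k_e$ summing to $2p$ (each $k_j \ge 2$, counting correlated pairs as joint multiplicity), the sparsity part of A-\ref{ass:S} bounds the number of labelings by $\mathcal O(n\,K_n^{v-1})$. Each edge then contributes at most $C_S/K_n$ times a higher-moment factor dominated by $\Cmom(k_j)^{k_j} \le (C k_j^{\nu/2})^{k_j}$ from A-\ref{ass:nu}, while correlated edge-pairs are absorbed into the same bookkeeping thanks to $|\tau_{ij}|\sqrt{s_{ij}s_{ji}} \le C_S/K_n$. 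Connectedness of $G$ forces $v \le e+1$, so the powers of $K_n$ cancel, and summing over graph types yields an estimate of the form
$$
\EE\, \tr(\sH^{2p}) \ \le\ 2n \cdot \bigl(C\, p^{\nu/2}\bigr)^{2p}
$$
for a universal constant $C$.

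To conclude, I would choose $p = p_n \asymp \log n$: the lower bound $K_n \ge C\log^{\nu\vee 1}(n)$ of A-\ref{ass:nu} is precisely the scale at which the factor $p^{\nu p}$ is absorbed, giving $\PP(\|W\| > \kappa) \le n^{-c}$ for $\kappa$ large enough and some $c>0$, hence summable. Borel--Cantelli then yields $\limsup_n \|W\| \le \kappa$ almost surely, as claimed. The main obstacle is in the third step: one must adapt the graph-classification/counting argument of \cite{HACHEM2024104276} so that edges paired through the correlation profile are treated on the same footing as doubly-traversed edges, keeping track of which vertices lie in $[n]$ versus $\{n+1,\ldots,2n\}$ inside the Hermitized walks. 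Once this bookkeeping is handled, the rest is a routine parametric variant of the classical sparse Wigner norm bound.
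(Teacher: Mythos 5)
Your strategy (Hermitize to $\sH$, then run the full trace/moment method from scratch and track the correlated pairs through the combinatorics) is genuinely different from the paper's, and it is worth comparing. The paper sidesteps the entire problem with a shorter decoupling argument: write $W=U+L$ where $U$ is the upper-triangular part (diagonal included) and $L$ the strictly lower part. Then $\|W\|\le\|U\|+\|L\|$, and each of $U$, $L$ is a matrix with \emph{fully independent} entries --- the correlation between $W_{ij}$ and $W_{ji}$ vanishes because the two members of a correlated pair are always placed in different triangular parts. This lets the paper simply invoke the sharp nonasymptotic norm bound of Bandeira and Van Handel~\cite{Bandeira_2016} for independent-entry matrices, with parameters $\sigma_1,\sigma_2=\cO(1)$ and $\sigma_\ast=\cO(K_n^{-1/2})$, giving $(\EE\|U\|^{2\log n})^{1/(2\log n)}\lesssim 1+\sqrt{(\log n)^{\nu\vee 1}/K_n}$, whence $\cO(1)$ thanks to A-\ref{ass:nu}; Markov and Borel--Cantelli finish. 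Your Hermitization preserves the correlations and forces you to re-derive a Bandeira--Van Handel-type estimate from scratch; that is feasible, but it is the harder road, and the paper's split $W=U+L$ is both cleaner and shorter precisely because it makes the correlation profile irrelevant.

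There is also a concrete flaw in your step three as written. The displayed bound $\EE\,\tr(\sH^{2p})\le 2n\,(Cp^{\nu/2})^{2p}$ contains no trace of $K_n$, so it cannot give $\|W\|=\cO(1)$: with $p\asymp\log n$ it would only yield $\|W\|\lesssim(\log n)^{\nu/2}$ almost surely. Your own narration (``the lower bound $K_n\ge C\log^{\nu\vee 1}(n)$ of A-\ref{ass:nu} is precisely the scale at which the factor $p^{\nu p}$ is absorbed'') makes clear you know the $K_n^{-}$ powers from the $s_{ij}\le C_S/K_n$ factors should cancel the $p^{\nu p}$ growth, but the displayed estimate forgot to keep them. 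The correct shape of the estimate is the Bandeira--Van Handel one,
\begin{equation*}
\bigl(\EE\,\tr(\sH^{2p})\bigr)^{1/(2p)}\ \lesssim\ n^{1/(2p)}\Bigl(\sigma + \sigma_\ast\,p^{(\nu\vee 1)/2}\Bigr),
\end{equation*}
where the $p$-dependent term is multiplied by $\sigma_\ast\lesssim K_n^{-1/2}$, and it is only this $\sigma_\ast$ weighting that makes the choice $p\asymp\log n$ under A-\ref{ass:nu} yield a bound uniformly in $n$. So the proposal is salvageable, but step three needs to be rewritten with the $K_n$-dependence of the edge weights carried all the way through the graph enumeration, not dropped at the end.
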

The proof of this proposition is due to a result of \cite{Bandeira_2016} and is provided in Appendix~\ref{app:spectralnormbound}.
In the following paragraph we give the sketch of proof of Lemma~\ref{lem:ampComparison}.

\begin{proof}[Proof of Lemma~\ref{lem:ampComparison}]
    The proof is basically an induction argument in which we use Lemma~\ref{lem:polyApprox_e}, Lemma~\ref{lem:Onsagerswitch} and the AMP convergence result for polynomial activation functions. The base case ($t=1$) is easy. Suppose now that the result is valid for all $s=1, \cdots, t$ and let us prove that it also holds for $s=t+1$. By the triangular inequality, we can write 
    \begin{equation*}
    \begin{split}
        \lVert \bs{x}^{t+1} - \bs{\cx}^{t+1}\rVert_n \leq & \lVert W\rVert \lVert h(\bs{x}^t) - p_e(\bs{\cx}^t)\rVert_n\\ 
        &+ \lVert \diag\left(V \EE\partial h(Z^t)\right)h(\bs{x}^{t-1}) - \diag\left(W\odot W^\top\partial p_e(\bs{\cx}^t)\right)p_e(\bs{\cx}^{t-1})\rVert_n.\\
    \end{split}
    \end{equation*}
The first term is directly handled by the induction hypothesis as well as the bound on the spectral norm of $W$ (see Proposition~\ref{prop:spectralNormBound} ). Let us now show that the second term, which corresponds to the normalized distance between the two Onsager terms, can also be bounded by $\delta(e) + o_{\mathbb{P}}(1)$. Using the triangular inequality, this term is less than $\lVert \Delta^{(1)}\lVert_n+\lVert \Delta^{(2)}\lVert_n+\lVert \Delta^{(3)}\lVert_n+\lVert \Delta^{(4)}\lVert_n$, where 
\begin{equation*}
    \begin{split}
        \Delta^{(1)} &= \diag\left(V\left(\EE\partial h(Z^t) - \EE \partial p_e (\bs{\cZ}^t)\right)\right)h(\bs{x}^{t-1}),\\
        \Delta^{(2)} &= \diag\left(V\EE\partial p_e(\bs{\cZ}^t)\right)\left(h(\bs{x}^{t-1}) - p_e(\bs{\cx}^{t-1})\right),\\
        \Delta^{(3)} &= \diag\left(V\left(\EE\partial p_e(\bs{\cZ}^t)-\partial p_e(\bs{\cx}^t)\right)\right)p_e(\bs{\cx}^{t-1}),\\
        \Delta^{(4)} &= \diag\left((V-W\odot W^\top)\partial p_e(\bs{\cx}^t)\right)p_e(\bs{\cx}^{t-1}).\\
    \end{split}
\end{equation*}

For $\lVert \Delta^{(1)} \rVert_n$. We bound $|[V(\EE\partial h(Z^t) - \EE \partial p_e (\bs{\cZ}^t))]_i|$ by
\begin{equation}
\label{eq:localBoundcZ}
    \left|[V\EE\partial h(Z^t) - V\EE\partial h(\bs{\cZ}^t)]_i\right| + \left|[V\EE\partial h(\bs{\cZ}^t) - V\EE\partial p_e(\bs{\cZ}^t)]_i\right| \quad \leq\quad  Ce + \delta(e),
\end{equation}
where the last inequality is due to Lemma~\ref{lem:polyApprox_e}. The normalized norm of $h(\bs{x}^{t-1})$ can be controlled using the Lipschitz property of $h$ and the result of Lemma~\ref{lem:polyActivation}.

For $\lVert \Delta^{(2)} \rVert_n$. We bound the real numbers $[V \EE\partial p_e(\bs{\cZ}^t)]_i$ using inequality~\eqref{eq:localBoundcZ} and we conclude using the induction hypothesis.

For $\lVert \Delta^{(3)} \rVert_n$. We use Theorem~\ref{thm:amp-pscal}-\eqref{cvg-small} to show that $[V(\EE\partial p_e(\bs{\cZ}^t)-\partial p_e(\bs{\cx}^t))]_i \toprobalong 0$ for any sequence $(i)$ less than $(n)$. We then use the bounds \eqref{mom-cx} to show that $\EE \lVert \Delta^{(3)} \rVert_n \tolong 0.$

For $\lVert \Delta^{(4)} \rVert_n$. Finally, we use Lemma~\ref{lem:Onsagerswitch} to show that $\lVert \Delta^{(4)} \rVert_n \toprobalong 0. $

Using all these bounds we finally get 
\begin{equation}
\label{eq:ind_tplus1}
    \lVert \bs{x}^{t+1} - \bs{\cx}^{t+1}\rVert_n \quad \leq\quad  \delta(e) + o_{\mathbb{P}}(1).
\end{equation}

Now, it remains to show that 
$$ \lVert h(\bs{x}^{t+1}) - p_e(\bs{\cx}^{t+1})\rVert_n \quad \leq\quad  \delta(e) + o_{\mathbb{P}}(1). $$
Using Lipschitz property of $h$ as well as the bound \eqref{eq:ind_tplus1}, we get
$$ 
\lVert h(\bs{x}^{t+1}) - p_e(\bs{\cx}^{t+1})\rVert_n 
\quad \leq\quad 
\delta(e) + o_{\mathbb{P}}(1) + \lVert h(\bs{\cx}^{t+1}) - p_e(\bs{\cx}^{t+1})\rVert_n\ . 
$$
Let $\varphi(x) = (h(x) - p_e(x))^2$ a continuous function with at most polynomial growth at infinity, we write
\begin{equation*}
    \lVert h(\bs{\cx}^{t+1}) - p_e(\bs{\cx}^{t+1})\rVert_n^2 = \frac{1}{n}\sum_{i\in [n]}\left(\varphi(\cx^{t+1}_i) -\EE\varphi(\cZ^{t+1}_i)\right) + \EE\lVert h(\bs{\cZ}^{t+1}) - p_e(\bs{\cZ}^{t+1})\rVert_n^2,
\end{equation*}
by Lemma~\ref{lem:polyActivation} the first term converges to $0$ in probability, and by Lemma~\ref{lem:polyApprox_e} the second term is bounded by $e$.
\end{proof}

\subsection{The non-zero diagonal matrix model}
\label{subsec:nonzeroDiag}

We have been working so far with a matrix $S$ with vanishing diagonal
($S_{ii}=0$), under A-\ref{ass:diag-nulle}. In \cite{HACHEM2024104276} and
\cite{Bayati_2015}, this assumption simplifies the combinatorial derivations
since it prevents the appearance of loops in the combinatorial structures. 

In this section, we lift Assumption A-\ref{ass:diag-nulle} and prove that Theorem \ref{thm:main} holds for random matrices with non zero diagonal elements. We proceed with a perturbation argument.

Consider a matrix $X$ that satisfies A-\ref{ass:X}. Let $S = (s_{ij})_{1\leq i,j \leq n}$ be the variance profile matrix satisfying A-\ref{ass:S} where the diagonal entries $s_{ii}$ are non necessarily zero. Finally, define the matrix $W$ as in Eq.~\ref{def:W}, i.e.
$$ 
W_{ij} = \sqrt{s_{ij}} X_{ij}\, .
$$
Let $\bs{x}^{0}$ and $\bs{\eta}$ two $n$ dimensional vectors satisfying A-\ref{ass:initial_point}, and $h$ a function satisfying A-\ref{ass:activation_function} and A-\ref{ass:nonDegen}. Consider the sequence defined by 
$$
\left(\bs{x}^{t}\right)_{t\in\mathbb{N}} := \AMPZ\left(X, S, h, \bs{x}^0, \bs{\eta}\right)\,.
$$ 
We remind below the iteration expression:
\begin{equation*}
    \bs{x}^{t+1} = W h\left(\bs{x}^t, \bs{\eta}, t\right) - \diag\left(V\mathbb{E}\left[\partial h(\bZ^t,\bs{\eta},t)\right]\right) h(\bs{x}^{t-1},\bs{\eta}, t-1)\,,
\end{equation*}
where $V = (v_{ij}) = (\tau_{ij}\sqrt{s_{ij}s_{ji}})$ and $(Z^1, \cdots, Z^t)\sim \DE\left(h, \bs{x}^0, S, t\right)$.

In order to proceed, define $\Tilde{S}$ to be equal to $S$ except the diagonal elements that we set to zero;
$$ \Tilde{s}_{ij} = (1-\delta_{ij}) s_{ij}\,. $$
Define matrix $\widetilde{W}$ by
$\widetilde{W}_{ij} = \sqrt{\Tilde{s}_{ij}} X_{ij}$, 
and the $\mathbb{R}^n$-valued sequences $\left( \tilde{\bs{x}}^{t}\right)_{t\in\mathbb{N}}$ by 
$$ \left( \tilde{\bs{x}}^{t}\right)_{t\in\mathbb{N}} := \AMPZ\left(X, \widetilde{S}, h, \bs{x}^0, \bs{\eta}\right), $$
where the iterations are given by
\begin{equation*}
     \tilde{\bs{x}}^{t+1} = \widetilde{W} h\left( \tilde{\bs{x}}^t, \bs{\eta}, t\right) - \diag\left(\widetilde{V}\mathbb{E}\left[\partial h(\tilde{Z}^t,\bs{\eta},t)\right]\right) h( \tilde{\bs{x}}^{t-1},\bs{\eta}, t-1)\, .
\end{equation*}
Here $\widetilde{V} = \left(\widetilde{S}\odot \widetilde{S}^\top\right)^{\odot 1/2}\odot T = \left((1-\delta_{ij})v_{ij}\right)$ and $(\tilde{Z}^1, \cdots, \tilde{Z}^t)\sim \DE\left(h, \bs{x}^0, \tilde{S}, t\right)$.

Since this sequence is generated using a matrix model with vanishing diagonal, we can apply the AMP result proven so far, i.e. for every uniformly bounded sequence $(\beta_i)_{i\in [n]}$ and every PL test function $\varphi : \mathbb{R}^{t_{\max}+1}\rightarrow \mathbb{R}$, we have 
\begin{equation*}
    \frac{1}{n} \sum_{i\in [n]} \beta_i \varphi(\eta_i, \Tilde{x}^1_i,\cdots, \Tilde{x}^{t_{\max}}_i) -  \beta_i \varphi(\eta_i, \Tilde{Z}^1_i,\cdots, \Tilde{Z}^{t_{\max}}_i) \quad \toprobalong\quad  0 \,.
\end{equation*}

In order to prove the same convergence result for $(\bs{x}^t)_{t\in \NN}$, we prove that $\bs{x}^t$ is a small perturbation of $ \tilde{\bs{x}}^t$ as $n$ grows to infinity.

\begin{lemma}
\label{lem:RRtilde}
    For each $i\in [n]$ and $t \leq t_{\max}$ recall that $R_i^t$ (respectively $\tilde{R}_i^t$) is the covariance matrix of $\vec Z^{t}_i := [Z_i^1, \cdots, Z_i^t]^\top$ (respectively $\vec {\Tilde{Z^{t}_i}}$). Then $\lVert R_i^t  - \tilde{R}_i^t\rVert$ converges to $0$ as $n$ grows to infinity.
\end{lemma}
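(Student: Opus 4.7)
The plan is to prove the bound by induction on $t$. For the base case $t=1$, the matrices are scalars with $R_i^1 - \tilde R_i^1 = s_{ii}\,h^2(x_i^0,\eta_i,0)$: by A-\ref{ass:initial_point} together with the continuity of $h$, the quantity $h^2(x_i^0,\eta_i,0)$ is uniformly bounded, while $s_{ii}\le C_S/K_n$ by A-\ref{ass:S}, yielding $|R_i^1-\tilde R_i^1|=\mathcal{O}(1/K_n)$ uniformly in $i$.

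For the inductive step, assume $\max_{i\in[n]}\|R_i^s-\tilde R_i^s\|\to 0$ for every $s\le t$. The starting point will be the decomposition
\[
R_i^{t+1}-\tilde R_i^{t+1} \;=\; \sum_{j\in[n]} s_{ij}\,\bigl(H_j^t-\tilde H_j^t\bigr) + s_{ii}\,\tilde H_i^t\,,
\]
since $\tilde s_{ij} = (1-\delta_{ij})s_{ij}$. The diagonal residual $s_{ii}\tilde H_i^t$ is $\mathcal{O}(1/K_n)$ by the upper bound part of Lemma~\ref{lem:boundVariance}. To handle the recursive term, the plan is to couple the two Gaussian vectors on a common probability space: pick $\xi\sim\mathcal N(0,I_t)$ and set $\vec Z_j^t=(R_j^t)^{1/2}\xi$, $\vec{\tilde Z}_j^t=(\tilde R_j^t)^{1/2}\xi$. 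The operator-norm estimate $\|A^{1/2}-B^{1/2}\|^2\le\|A-B\|$, valid for any two PSD matrices (and, crucially, not requiring eigenvalues bounded away from zero), then gives
\[
\mathbb E\bigl(Z_j^s-\tilde Z_j^s\bigr)^2 \;\le\; t\,\|R_j^t-\tilde R_j^t\|\,.
\]

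Plugging this into the Lipschitz bound from A-\ref{ass:activation_function} yields $\mathbb E(h(Z_j^s,\eta_j,s)-h(\tilde Z_j^s,\eta_j,s))^2\le L^2 t\,\|R_j^t-\tilde R_j^t\|$. Combined with the uniform $L^2$-boundedness of $h(Z_j^s,\eta_j,s)$ (which follows from Lipschitz continuity, A-\ref{ass:initial_point}, and the spectral-norm upper bound in Lemma~\ref{lem:boundVariance}), the elementary identity $ab-a'b'=a(b-b')+b'(a-a')$ together with Cauchy--Schwarz produces, entrywise and then in operator norm,
\[
\|H_j^t-\tilde H_j^t\| \;\le\; C(t)\,\|R_j^t-\tilde R_j^t\|^{1/2}\,.
\]

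To conclude, the row-sum bound $\sum_j s_{ij}\le C_{\text{card}}\,C_S$ from A-\ref{ass:S} gives
\[
\max_{i\in[n]}\|R_i^{t+1}-\tilde R_i^{t+1}\| \;\le\; C(t)\,\Bigl(\max_{j\in[n]}\|R_j^t-\tilde R_j^t\|\Bigr)^{1/2} + \mathcal{O}(1/K_n)\,,
\]
and the induction hypothesis closes the argument. The only delicate ingredient is the PSD square-root estimate; because it is dimension-free and does not depend on any lower eigenvalue bound, it avoids invoking the non-degeneracy conclusion of Lemma~\ref{lem:boundVariance}, which plays no role here. Everything else is routine manipulation of Lipschitz functions against coupled Gaussians.
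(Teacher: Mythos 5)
Your proof is correct and essentially reproduces the paper's argument: both proceed by induction, split off the $s_{ii}$ diagonal term, couple the Gaussian vectors through their covariance square roots, and invoke the $\tfrac12$-H\"older continuity of the PSD square root together with the spectral-norm upper bound from Lemma~\ref{lem:boundVariance}. The only cosmetic difference is that the paper couples the two-dimensional marginals $(Z^t_\ell,Z^s_\ell)$ and uses the pseudo-Lipschitz property of $(x,y)\mapsto h(x)h(y)$, whereas you couple the full $t$-dimensional vectors and use the Lipschitz property of $h$ with the identity $ab-a'b'=a(b-b')+b'(a-a')$; the two routes give the same bound.
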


\begin{proof}
    We  prove this result by induction on $t$. For $t=1$ we write:
    \[
     R_i^1  - \tilde{R}_i^1 = \sum_{\ell \in [n]} s_{i\ell} \left(h(x_\ell^0,\eta_\ell, 0)\right)^2 - \sum_{\ell \in [n] \ : \ \ell \neq i} s_{i\ell} \left(h(x_\ell^0,\eta_\ell, 0)\right)^2 = s_{ii}\left(h(x_i^0,\eta_i, 0)\right)^2\, .
    \]
    Hence 
    $$ 
    \left|R_i^1  - \tilde{R}_i^1 \right| \ \leq\ \frac{C}{K_n} \quad \tolong\quad  0\,.
    $$
    Suppose now that for all $s\leq t$ the quantity $\lVert R_i^s  - \tilde{R}_i^s\rVert$ converges to zero and let us now prove that
    this convergence also holds at iteration step $t+1$.
    To this end, we must study the $(t+1, s+1)$-th entry of the $(t+1)\times (t+1)$ of the covariance matrices $R_i^{t+1}$ and $\tilde{R}_i^{t+1}$. We have
    \begin{multline}\label{eq:Rapprox}
        R_i^{t+1}(t+1, s+1) - \tilde{R}_i^{t+1}(t+1, s+1) \\
        =\sum_{\ell \in [n] \ : \ \ell \neq i} s_{i\ell} \left(\mathbb{E}\left[h(Z_\ell^t, \eta_\ell, t) h(Z_\ell^s, \eta_\ell, s)\right]-\mathbb{E}\left[h(\tilde{Z}_\ell^t, \eta_\ell, t)h(\tilde{Z}_\ell^s, \eta_\ell, s)\right]\right)\\ + s_{ii}\mathbb{E}\left[h(Z_i^t, \eta_i, t) h(Z_i^s, \eta_i, s)\right]\, .
    \end{multline}
Using the fact that $\mathbb{E}\left[h(Z_i^s, \eta_i, s)^2\right]$ is bounded by a constant that depends only on $t$ and using Cauchy-Schwartz inequality, we have
$$  \left|s_{ii}\mathbb{E}\left[h(Z_i^t, \eta_i, t) h(Z_i^s, \eta_i, s)\right]\right| \leq \frac{C}{K_n}\, . $$

In order to bound the first term of the right hand side of Eq. \eqref{eq:Rapprox}, first notice that since $h$ is Lipschitz then $H:(x_1, x_2) \mapsto h(x_1)h(x_2)$ is PL, i.e. there exists $C>0$ such that 
$$ 
\forall x, y\in \mathbb{R}^2\, \quad \left|H(x)-H(y)\right| \leq C \lVert x-y\rVert_2 \left(1+\lVert x\rVert_2+\lVert y\rVert_2\right)\,.$$

Let $\Sigma^2\in \mathbb{R}^{2\times 2}$ and $\tilde\Sigma^2\in \mathbb{R}^{2\times 2}$ be the covariance matrices of the vectors $Z_\ell^{t,s}=(Z_\ell^t,Z_\ell^s)$ and $\tilde Z_\ell^{t,s}=(\tilde Z_\ell^t,\tilde Z_\ell^s)$ respectively. Then given $\xi\sim \mathcal{N}_2(0,I_2)$ we can write 
\begin{equation*}
    \begin{split}
        \left|\mathbb{E}\left[H(Z_\ell^{t,s}) - H(\tilde Z_\ell^{t,s})\right]\right| &= \left|\mathbb{E}\left[ H(\Sigma \xi) - H(\tilde\Sigma \xi) \right]\right|\\
        &\leq C \lVert\Sigma-\tilde\Sigma \rVert\mathbb{E}\left[\lVert \xi\rVert_2 \left(1+\lVert Z_\ell^{t,s}\rVert_2+\lVert \tilde Z_\ell^{t,s}\rVert_2\right)\right]\,. \\
    \end{split}
\end{equation*}
Using Lemma~\ref{lem:boundVariance} it is easy to see that the factor
$$\mathbb{E}\left[\lVert \xi\rVert_2 \left(1+\lVert Z_\ell^{t,s}\rVert_2+\lVert \tilde Z_\ell^{t,s}\rVert_2\right)\right]\,. $$
is bounded by a constant depending only on $t_{\max}$. Now using the induction hypothesis we obtain the following inequality: 
$$ \lVert\Sigma-\tilde\Sigma \rVert \leq \lVert\Sigma^2-\tilde\Sigma^2 \rVert^{1/2} \leq \lVert R_\ell^t- \tilde{R}^t_\ell \rVert^{1/2} \tolong 0 $$
Here we used the fact that the matrix squared root is $1/2$-Hölder continuous on the set of symmetric positive matrices, the proof in in Appendix~\ref{app:srholdercont}. Note that by A-\ref{ass:S} we have $s_{ij}\leq C_S K_n^{-1}$, plugging this into \eqref{eq:Rapprox} gives the desired result. 
\end{proof}
\begin{remark}
    Notice that we can also specify the convergence rate of $\lVert R_i^t  - \tilde{R}_i^t\rVert$ to $0$. In fact we can show that 
    $$ \lVert R_i^t  - \tilde{R}_i^t\rVert \leq \frac{C}{K_n^{1/2^t}}. $$
\end{remark}
\subsubsection*{Proof of Theorem~\ref{thm:main} in the general case}
We  begin by proving the following convergence by induction on $t$,
\begin{equation}
\label{eq:xxapprox}
    \lVert \bs{x}^t -  \tilde{\bs{x}}^t\rVert_n \toprobalong 0\,. 
\end{equation}
For $t=1$, knowing that the $x_i^0$'s live on a compact $\mathcal{Q}_x$ we get
\begin{equation}
\label{eq:xxtilde}
    \lVert \bs{x}^1 -  \tilde{\bs{x}}^1\rVert_n^2 = \lVert( W- \tilde{W}) h(\bs{x}^0)\rVert_n^2 = \frac{1}{n}\sum_{i=1}^n s_{ii} X_{ii}^2 h(x_i^0)^2\leq \frac{C}{K_n} \left(\frac{\sum_{i=1}^{n}X_{ii}^2}{n}\right)  \,,  
\end{equation}
thus $ \lVert \bs{x}^1 -  \tilde{\bs{x}}^1\rVert_n^2 \toprobalong 0$. Now assume that this holds for all $s\in \{1,\cdots, t\}$ and let us show that it is also satisfied for $t+1$, i.e.
$$ \lVert \bs{x}^{t+1} -  \tilde{\bs{x}}^{t+1}\rVert_n \toprobalong 0\,. $$
Let us write the difference between $\bs{x}^{t+1}$ and $ \tilde{\bs{x}}^{t+1}$,
\begin{multline*}
     \bs{x}^{t+1} -  \tilde{\bs{x}}^{t+1} = Wh(\bs{x}^t) - \tilde{W}h( \tilde{\bs{x}}^t)\\ + \diag\left(V\mathbb{E}\left[\partial h(\bZ^t)\right]\right) h(\bs{x}^{t-1}) - \diag\left(\tilde{V}\mathbb{E}\left[\partial h(\tilde{\bZ}^t)\right]\right) h( \tilde{\bs{x}}^{t-1})\ ,
\end{multline*}
We first show that
\begin{equation*}
    \rVert Wh(\bs{x}^t) - \tilde{W}h( \tilde{\bs{x}}^t)\rVert_n \toprobalong 0\,.
\end{equation*}
We have

\begin{equation}
\label{eq:WHaprox}
     \rVert Wh(\bs{x}^t) - \tilde{W}h( \tilde{\bs{x}}^t)\rVert_n \leq \rVert(W-\tilde{W})h( \tilde{\bs{x}}^t)\lVert_n + \rVert W(h(\bs{x}^t)-h( \tilde{\bs{x}}^t))\lVert_n
\end{equation}
Using the fact that the $\tilde{x}_i^t$ are bounded by a constant $C=C(t)$ independent of $n$ we can directly see that the first term of \eqref{eq:WHaprox} converges to zero. For the second term, we use the bound on $\lVert W\rVert$ (see Proposition~\ref{prop:spectralNormBound}) as well as the Lipschitz property of $h$ and the induction hypothesis.

Now let us study the term
\begin{equation}
\label{eq:onsagerdiff}
 \diag\left(V\mathbb{E}\left[\partial h(\bZ^t)\right]\right) h(\bs{x}^{t-1}) - \diag\left(\tilde{V}\mathbb{E}\left[\partial h(\tilde{\bZ}^t)\right]\right) h( \tilde{\bs{x}}^{t-1})\,.
\end{equation}
This term can be decomposed as follows
\begin{equation*}
\begin{split}
    &\diag\left((V-\tilde{V})\mathbb{E}\left[\partial h(\bZ^t)\right]\right) h(\bs{x}^{t-1})\\ 
    &+ \diag\left(\tilde{V}\mathbb{E}\left[\partial h(\bZ^t)-\partial h(\tilde{\bZ}^t)\right]\right) h(\bs{x}^{t-1})\\
    &+\diag\left(\tilde{V}\mathbb{E}\left[\partial h(\tilde{\bZ}^t)\right]\right) \left(h(\bs{x}^{t-1})-h( \tilde{\bs{x}}^{t-1})\right)\\ &:= \Delta_1 + \Delta_2 + \Delta_3\,.
\end{split}
\end{equation*}
Using the Lipschitz property of $h$ we can bound $\lVert \Delta_3\rVert_n^2$ as follows:
\begin{equation*}
\begin{split}
    \lVert \Delta_3\rVert_n &= \left\lVert \diag\left(\tilde{V}\mathbb{E}\left[\partial h(\tilde{\bZ}^t)\right]\right)\left(h(\bs{x}^{t-1})-h( \tilde{\bs{x}}^{t-1})\right)\right\rVert_n\\ 
    &\leq \left\lVert \diag\left(\tilde{V}\mathbb{E}\left[\partial h(\tilde{\bZ}^t)\right]\right)\right\rVert\lVert h(\bs{x}^{t-1})-h( \tilde{\bs{x}}^{t-1})\rVert_n\\
    &\leq C \underset{j\in [n]}{\max}\left\{\mathbb{E}\left|\partial h(\tilde{\bZ}_j^t)\right|\right\}\lVert \bs{x}^{t-1}- \tilde{\bs{x}}^{t-1}\rVert_n\,.
\end{split}
\end{equation*}
Recall that $\underset{j\in [n]}{\max}\left\{\mathbb{E}\left|\partial h(\tilde{\bZ}_j^t)\right|\right\}$ is bounded by $C=C(t)$, using the induction hypothesis we prove that $\lVert \Delta_3\rVert_n \toprobalong  0$.

In order to bound the first term $\lVert \Delta_1\rVert_n$, notice that $V - \tilde{V}$ is a diagonal matrix whose entries are bounded by $C/K_n$, thus 
\begin{equation*}
    \left\lVert \diag\left((V-\tilde{V})\mathbb{E}\left[\partial h(\bZ^t)\right]\right)\right\rVert \leq \frac{C}{K_n}  \underset{i\in [n]}{\max}\left\{\mathbb{E}\left|\partial h(\bZ_i^t)\right|\right\} = \mathcal{O}\left(\frac{1}{K_n}\right),
\end{equation*}
where the last equality is by the boundness of $\underset{i\in [n]}{\max}\left\{\mathbb{E}\left|\partial h(\bZ_i^t)\right|\right\}$. Now write $$h(\bs{x}^{t-1}) = \left(h(\bs{x}^{t-1})-h( \tilde{\bs{x}}^{t-1})\right) + h( \tilde{\bs{x}}^{t-1}),$$ by the induction hypothesis we clearly see that $\lVert h(\bs{x}^{t-1})-h( \tilde{\bs{x}}^{t-1})\rVert_n \toprobalong 0$, in addition we know that $\lVert h( \tilde{\bs{x}}^{t-1})\rVert_n^2 - \mathbb{E}\left\lVert h(\tilde{Z}^{t-1})\right\rVert_n^2 \toprobalong 0$ so by bounding $\mathbb{E}\left\lVert h(\tilde{Z}^{t-1})\right\rVert_n^2$ we get that the probability of $\lVert h(\bs{x}^{t-1})\rVert_n$ not being bounded converges to $0$. Finally $\lVert \Delta_1\rVert_n \toprobalong 0$.

For $\lVert \Delta_2\rVert_n$, we use Lemma~\ref{lem:RRtilde} to bound $\left\lVert \diag\left(\tilde{V}\mathbb{E}\left[\partial h(\bZ^t)-\partial h(\tilde{\bZ}^t)\right]\right)\right\rVert$ by $C/K_n$ and finally get $\lVert \Delta_2\rVert_n \toprobalong 0$. To sum up, we have proved that the difference between the two Onsager terms \eqref{eq:onsagerdiff} has a normalized norm converging to $0$. Finally, we have proved \eqref{eq:xxtilde} by induction, i.e. $ \tilde{\bs{x}}^t$ asymptotically approximates $\bs{x}^t$ in terms of normalized norm. Now we are able to use the convergence result of the sequence $( \tilde{\bs{x}}^t)_t$ to prove the convergence of $ \tilde{\bs{x}}^t$ as $n$ grows to $\infty$. Let $\varphi : \RR^{t_{\max}} \to \RR$ be a pseudo-Lipschitz function and denote $\bx_i = \left(x_i^1, \cdots, x_i^{t_{\max}}\right)^\top$ and $\tilde{\bx}_i = \left(\tilde{x}_i^1, \cdots, \tilde{x}_i^{t_{\max}}\right)^\top$, and without loss of generality we  omit the scalars $\beta_i$ and the parameters $\eta_i$ by considering that $\varphi$ depends also on the index $i$. We have 
\begin{equation*}
\begin{split}
     \frac{1}{n} \left|\sum_{i=1}^{n}\varphi(\bx_i) -\varphi(\vec {Z^{t}_i}) \right| &\leq \frac{1}{n} \left|\sum_{i=1}^{n}\varphi(\bx_i) -\varphi(\tilde{\bx}_i) \right| + \frac{1}{n} \left|\sum_{i=1}^{n}\varphi(\tilde{\bx}_i) -\varphi(\vec {\Tilde{Z^{t}_i}}) \right| + \frac{1}{n} \left|\sum_{i=1}^{n}\varphi(\vec {\Tilde{Z^{t}_i}}) -\varphi(\vec {Z^{t}_i}) \right|\\
     &=: \Theta_1 + \Theta_2 + \Theta_3\,. \\
\end{split}
\end{equation*}

Using the pseudo-Lipschitz property of $\varphi$ we get the following 
\begin{equation*}
    \begin{split}
        \Theta_1 &\leq \frac{C}{n}\sum_{i=1}^n \lVert \bx_i -\tilde{\bx}_i\rVert  \left(1+\lVert \bx_i\rVert+\lVert\tilde{\bx}_i\rVert\right)  \\
        &\leq \frac{C}{n}\left(\sum_{t=1}^{t_{\max}} \lVert \bs{x}^t - \tilde{\bs{x}}^t\rVert^2 \right)^{\frac{1}{2}} \left(\sum_{i=1}^n\left(1+\lVert \bx_i\rVert+\lVert\tilde{\bx}_i\rVert\right)^2\right)^{\frac{1}{2}}  \\
        &\leq \frac{C}{n}\left(\sum_{t=1}^{t_{\max}} \lVert \bs{x}^t - \tilde{\bs{x}}^t\rVert \right) \left(n+\sum_{t=1}^{t_{\max}}\lVert \bs{x}^t\rVert^2+\lVert \tilde{\bs{x}}^t\rVert^2\right)^{\frac{1}{2}}  \\
        &\leq C\left(\sum_{t=1}^{t_{\max}} \lVert \bs{x}^t - \tilde{\bs{x}}^t\rVert_n \right) \left(1+\sum_{t=1}^{t_{\max}}\lVert \bs{x}^t\rVert_n+\sum_{t=1}^{t_{\max}}\lVert \tilde{\bs{x}}^t\rVert_n\right)\,.  \\
    \end{split}
\end{equation*}
Then, by using \eqref{eq:xxtilde} we get $\Theta_1 \toprobalong 0$. The term $\Theta_2$ converges to $0$ in probability by Theorem~\ref{thm:main} applied with zero diagonal matrix model. As for $\Theta_3$ we use the pseudo-Lipschitz property of $\varphi$ as well as Lemma~\ref{lem:RRtilde}. This ends the proof for Theorem~\ref{thm:main}.

\appendix

\section{Proof of Theorem~\ref{th:noncentered}}
\label{app:noncenteredproof}
We prove here the AMP result for non-centered matrices described in Theorem~\ref{th:noncentered}.

We follow the general idea described in \cite{feng2022unifying}, which is to reduce the problem to an AMP with centered random matrix model and apply Theorem~\ref{thm:main}. To this end, write the following,
\begin{align*}
    \bs{x}^{t+1} &= \lambda\left\langle\bv,h_t(\bs{x}^t, \bs{\eta})  \right\rangle\bu+ W h_t(\bs{x}^t, \bs{\eta}) - \diag\left(V \EE\partial h_t(Z^t+\mu_t \bu, \bs{\eta})\right) h_{t-1}(\bs{x}^{t-1}, \bs{\eta})\\
    &= \mu_{t+1}\bu+ W h_t(\bs{x}^t, \bs{\eta}) - \diag\left(V \EE\partial h_t(Z^t+\mu_t \bu, \bs{\eta})\right) h_{t-1}(\bs{x}^{t-1}, \bs{\eta}) + \delta_{t+1}\bu,
\end{align*}
where $\delta_t:= \lambda\left\langle\bv,h_{t-1}(\bs{x}^{t-1}, \bs{\eta})  \right\rangle - \mu_t$. One should think of $\delta_{t+1}\bu$ as an error term, we will show later that this term has a negligible effect. Define now the following sequence $\left(\bs{\tilde{y}}^t\right)_{t\in\NN}$ as follows,
\begin{equation*}
    \bs{\tilde{y}}^0 = \bx^0 \quad \text{and} \quad \bs{\tilde{y}}^t := \bs{x}^t- \mu_t\bu \quad \text{for } t\geq 1,
\end{equation*}
this sequence satisfies the following recursion,
\begin{align}
\label{eq:ytildeproof}
    \bs{\tilde{y}}^{t+1}
    &= W g_t(\bs{\tilde{y}}^t, \bv, \bs{\eta}) - \diag\left(V \EE\partial g_t(\bZ^t, \bv, \bs{\eta})\right) g_{t-1}(\bs{\tilde{y}}^{t-1}, \bv,\bs{\eta}) + \delta_{t+1}\bv,
\end{align}
where the function $g_t(x,v, \eta)$ with parameters $v$ and $\eta$ is given by,
\begin{equation*}
    g_t(x,v, \eta) := h_t(x+\lambda v, \eta)\quad \forall x\in \RR.
\end{equation*}
One can clearly see that this function satisfies the same assumptions as $h_t$. Now define the following AMP algorithm $(\bs{y}^t)_{t\in\NN}$ by 
\begin{equation}\label{eq:yampproof}
    \begin{cases}
        \bs{y}^0 &= \bx^0,\\
    \bs{y}^{t+1}
    &= W g_t(\bs{y}^t, \bv, \bs{\eta}) - \diag\left(V \EE\partial g_t(\bZ^t, \bv, \bs{\eta})\right) g_{t-1}(\bs{y}^{t-1}, \bv,\bs{\eta})\ ,
    \end{cases}
\end{equation}
where 
\begin{equation*}
    \left(\bZ^1, \cdots, \bZ^t\right) \sim \widetilde{\DE}\left(h, \bx^0, S, t,\bu, \bv\right)\,,
\end{equation*}
in the sense of Definition \ref{def:noncenteredde}. A key observation is that 
\begin{equation*}
    \left(\bZ^1, \cdots, \bZ^t\right) \sim \DE\left(g, \bx^0, S, t\right)\, .
\end{equation*}
Hence Theorem~\ref{thm:main} applies for the recursion \eqref{eq:yampproof} and yields that for any pseudo-Lipschitz test function $\varphi :\RR^{t+1} \to \RR$ it holds that
    \begin{equation}
    \label{eq:ampresulty}
        \frac{1}{n} \sum_{i=1}^n \beta_i \varphi\left(\eta_i, y_i^1, \cdots, y_i^t\right) - \beta_i\EE\left[\varphi\left(\eta_i, Z_i^1, \cdots, Z_i^t\right)\right] \toprobalong 0\, .
    \end{equation}

In order to prove our result, it suffices to show that the error term $\delta_{t+1}\bu$ in Eq.~\eqref{eq:ytildeproof} is negligible and that for all $t$ one has $\bs{y}^t\approx \bs{\tilde{y}}^t$. To this end, we want to prove by induction on $t$ that,
\begin{equation}
\label{eq:induction_centered}
    \delta_t \toprobalong 0 \quad \text{and} \quad \lVert\bs{\tilde{y}}^t-\bs{y}^t \rVert_n \toprobalong 0, \quad \text{for all }\, t\geq 1.
\end{equation}
For $t=1$, we have $\delta_1 = 0$ and $\bs{\tilde{y}}^1 = \bs{y}^1$. Suppose that \eqref{eq:induction_centered} is true for $t$, and let us prove that this remains true for $t+1$ as well. Let us begin with $\delta_{t+1}$. We have the following
\begin{align*}
    \delta_{t+1} &= \lambda \sum_{i\in [n]}v_i \left(g_t(\tilde{y}^t_i) - \EE g_t(Z_i^t)\right)\\
    &= \lambda \sum_{i\in [n]}v_i \left(g_t(\tilde{y}^t_i) - g_t(y_i^t)\right) +   \lambda \sum_{i\in [n]}v_i \left(g_t(y^t_i) - \EE g_t(Z_i^t)\right)\\
    &:= T_1 + T_2\,.
\end{align*}
Using the Lipschitz property of the function $g_t$ as well as the induction hypothesis, namely, $\lVert\bs{\tilde{y}^t}-\bs{y}^t\rVert_n\toprobalong 0$ we directly get that $T_1 \toprobalong 0$. As for the second term, $T_2\toprobalong 0$ is a direct application of Theorem~\ref{thm:main}, i.e. Eq.~\eqref{eq:ampresulty}.

It remains to show that $\lVert\bs{\tilde{y}^{t+1}}-\bs{y}^{t+1}\rVert_n\toprobalong 0$. Using the recursive definition of $\left(\tilde{\bs{y}}^t\right)_t$ and $\left(\bs{y}^t\right)_t$ in \eqref{eq:ytildeproof} and \eqref{eq:yampproof} we can write the following;
\begin{align*}
    \bs{\tilde{y}^{t+1}}-\bs{y}^{t+1} = W\left(g_t(\bs{\tilde{y}}) - g_t(\bs{y}^t)\right) - \diag\left(V\EE\partial g_t(\bZ^t)\right)\left(g_{t-1}(\bs{\tilde{y}^{t-1}})-g_{t-1}(\bs{y^{t-1}})\right) + \delta_{t+1} \bu\,.
\end{align*}
The normalized norm of the first term can be easily handled using the Lipschitz property of the function $g_t$ as well as the induction hypothesis, we also use Proposition~\ref{prop:spectralNormBound} which ensures the boundness of the spectral norm $\lVert W\rVert$. As for the second term, we similarly show that the quantity $\lVert g_{t-1}(\tilde{\bs{y}}^{t-1})-g_{t-1}(\bs{y}^{t-1})  \rVert_n$ vanishes, in probability. It remains to show that $\lVert \diag\left(V\EE\partial g_t(\bs{Z}^t)\right)\rVert$ is bounded as $n$ goes to infinity, this clearly holds as $\partial g_t$ is the derivative of a Lipschitz function and thus is bounded.

Finally, we have proved that $\lVert\bs{\tilde{y}^{t+1}}-\bs{y}^{t+1}\rVert_n \toprobalong 0$ which ends the induction argument. Using \eqref{eq:induction_centered} and the AMP result of the sequence $\left(\bs{y}^t\right)_t$ we directly deduce an AMP result of the sequence $\left(\tilde{\bs{y}}^t\right)_t$.

\section{Elements of proof of Lemma~\ref{lem:polyActivation}}
\begin{lemma}
\label{lem:momentsTogeneral}
    Let $(m_n)$ and $ (\sigma_n^2)$ be two bounded sequences and let $(\nu_n)$ be the sequence of Gaussian measures with means $m_n$ and variances $\sigma_n^2$. Let $(\mu_n)$ be any sequence of probability measures such that the following holds for each $k\in \NN$, 
    \begin{equation}
    \label{eq:convPhiAss}
        \int x^k d \mu_n - \int x^k d \nu_n \tolong 0\,.
    \end{equation}
    Then for any continuous function $\psi : \RR \to \RR$ such that $|\varphi(x)| \leq C(1+ |x|^m) $ for some constant $C>0$ and some integer $m$ we have 
    \begin{equation}
    \label{eq:convPhi}
        \int \psi(x) d \mu_n - \int \psi(x) d \nu_n \tolong 0\,.
    \end{equation}
\end{lemma}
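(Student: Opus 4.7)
My plan is to pass from convergence of moments to convergence of integrals against $\psi$ by a standard three-step cut-off plus polynomial approximation argument, whose only non-routine ingredient is a uniform tail bound on $\mu_n$.

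First, I would establish uniform moment bounds for $\mu_n$: since $(m_n)$ and $(\sigma_n^2)$ are bounded, $\int |x|^{2m+2}\,d\nu_n$ is bounded uniformly in $n$, and the moment hypothesis \eqref{eq:convPhiAss} then gives $M := \sup_n \int |x|^{2m+2}\,d\mu_n < \infty$. In particular, $(|x|^m)_{n}$ is uniformly integrable with respect to $\mu_n$ (and also with respect to $\nu_n$), and the family $(\mu_n)$ is tight.

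Next, fix $\varepsilon > 0$. By Markov's inequality,
\begin{equation*}
\int_{|x| > R} (1+|x|^m)\,d\mu_n \;\leq\; \frac{1}{R^{m+2}} \int (1+|x|^{2m+2})\,d\mu_n \;\leq\; \frac{1+M}{R^{m+2}},
\end{equation*}
and the same bound holds for $\nu_n$. Thus for $R$ large enough (depending only on $\varepsilon$, $C$ and $M$), the contribution of $\{|x|>R\}$ to both $\int \psi\,d\mu_n$ and $\int \psi\,d\nu_n$ is smaller than $\varepsilon/3$, uniformly in $n$.

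On the compact interval $[-R,R]$, apply the Stone–Weierstrass theorem to the continuous function $\psi$: choose a polynomial $P$ with $\sup_{|x|\le R}|\psi(x)-P(x)| < \varepsilon/(3(1+M))$. Writing
\begin{equation*}
\int_{|x|\le R}\!\psi\,d\mu_n - \int_{|x|\le R}\!\psi\,d\nu_n \;=\; \int_{|x|\le R}\!(\psi-P)\,d\mu_n - \int_{|x|\le R}\!(\psi-P)\,d\nu_n + \int_{|x|\le R}\!P\,d\mu_n - \int_{|x|\le R}\!P\,d\nu_n,
\end{equation*}
the first two terms are each at most $\varepsilon/3$. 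For the polynomial terms, I would extend the integrals back to $\mathbb{R}$: the tails $\int_{|x|>R} P\,d\mu_n$ and $\int_{|x|>R}P\,d\nu_n$ are again controlled by the uniform bound on $\int |x|^{2\deg P}\,d\mu_n$ (noting $\int |x|^k\,d\mu_n$ is uniformly bounded for every $k$, as $\nu_n$ has uniformly bounded moments of all orders and \eqref{eq:convPhiAss} holds for every $k$), and the full-line integral difference $\int P\,d\mu_n - \int P\,d\nu_n$ tends to $0$ by hypothesis \eqref{eq:convPhiAss} and linearity. Choosing $n$ large makes this contribution less than $\varepsilon/3$. Summing the three pieces yields \eqref{eq:convPhi}.

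The only subtle point is the step from \eqref{eq:convPhiAss} for each fixed $k$ to a uniform bound for all orders; this is immediate from the fact that $\nu_n$ is Gaussian with parameters in a bounded set, so no genuine difficulty arises. Everything else is bookkeeping with $\varepsilon/3$.
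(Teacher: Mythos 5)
Your truncation-plus-Stone--Weierstrass strategy has a circularity in the polynomial-tail estimate that you do not resolve. You fix $R$ first so that the tails of $\psi$ beyond $R$ are uniformly small, then choose $P$ with $\sup_{|x|\le R}|\psi - P| < \varepsilon/(3(1+M))$, and finally you need $\int_{|x|>R}|P|\,d\mu_n$ and $\int_{|x|>R}|P|\,d\nu_n$ to be small. But once $R$ is fixed and $P$ is built on $[-R,R]$, nothing prevents $P$ from being huge on $\{|x|>R\}$: the uniform bound $\sup_n \int |x|^{2\deg P}\,d\mu_n < \infty$ gives a \emph{finite} bound on these tails, not a small one, and it cannot be made small by taking $n$ large (for fixed $R$ and $P$ these tail integrals do not shrink with $n$). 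Enlarging $R$ afterwards to tame the tail of $P$ does not help either, because $P$ was tailored to $[-R,R]$ and is then uncontrolled on the intermediate annulus. This is precisely the classical obstruction to proving ``moment convergence implies convergence of $\int\psi$'' via naive truncation and uniform polynomial approximation.

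The paper sidesteps the issue with a soft compactness argument: it reduces to subsequences, uses tightness of $(\nu_n)$ (bounded Gaussian parameters) to extract a weak limit $\nu$, shows via the holomorphic moment generating functions, a normal-families argument, and Curtiss' theorem that $\nu$ is a Gaussian determined by its moments, deduces $\mu_n \Rightarrow \nu$ along the subsequence by the method of moments, and then concludes by uniform integrability --- the one ingredient you do establish correctly. If you wish to keep a direct argument you would need a polynomial approximant of $\psi$ that is simultaneously good in a weighted $L^2$ sense against all the Gaussian laws in play, which is exactly what Lemma~\ref{lem:polyApprox} delivers for Lipschitz $h$ but is not available for free for a general continuous $\psi$ of polynomial growth. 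As written, your proof contains a genuine gap at the polynomial-tail step.
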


\begin{proof}
    First, it is sufficient to show that from any subsequence of $(n)$ we can extract a further subsequence such that the convergence in \eqref{eq:convPhi} holds along this subsequence. So without loss of generality we  only prove that if \eqref{eq:convPhiAss} holds along the sequence $(n)$ then there exists a subsequence of $(n)$ along which \eqref{eq:convPhi} holds.

    The sequence of probability measures $(\nu_n)$ is tight because $(m_n)$ and $(\sigma_n^2)$ are bounded, thus we can extract a subsequence of $(n)$, which  also be denoted as $(n)$, such that $(\nu_n)$ converges weakly to a probability measure $\nu$. Consider now the moment generating function $\Phi_{\nu_n}$ of $\nu_n$ defined on $\RR$ as follows,
    $$ \Phi_{\nu_n}(t) = \int e^{tx} d \nu_n(x) = \exp(m_n t + \sigma_n^2 t^2 / 2), \quad t\in\RR. $$
    This function can be viewed as a restriction to the real line of the following holomorphic function 
    $$ \Phi_{\nu_n}(z) = \int e^{zx} d \nu_n(x) = \exp(m_n z + \sigma_n^2 z^2 / 2), \quad z\in\CC. $$
    Notice that the sequence $(\Phi_{\nu_n})$ is uniformly bounded on compact sets of $\CC$, thus there exists a holomorphic function $\Phi$ and a subsequence of $(n)$ such that $(\Phi_{\nu_n})$ converges uniformly to $\Phi$ on compact sets. This implies the pointwise convergence of the moment generating function $(\Phi_{\nu_n}(t))$ to $\Phi(t)$ so by a convergence result in \cite[Theorem 3]{curtiss} and the uniqueness of the weak limit, we get $\Phi(t) = \Phi_\nu(t)$. The convergence of $(\Phi_{\nu_n}(t))$ to $\Phi_\nu(t)$ implies the convergence of the moments, and by \eqref{eq:convPhiAss} we get 
    \begin{equation}
    \label{eq:momentConvProof}
        \int x^k d \mu_n \tolong \int x^k d \nu,
    \end{equation}
    we also know that $\Phi_\nu$ characterizes $\nu$ \cite[Theorem 1]{curtiss}, thus $\nu$ is determined by its moments, so $(\mu_n)$ converges weakly to $\nu$. Let $\psi$ be a function as in the lemma and let $X_n$ and $X$ be random variables with distributions $\mu_n$ and $\nu$ respectively, we want to prove that $\EE[\psi(X_n)] \tolong \EE[\psi(X)]$, this follows from the convergence in distribution of $(\psi(X_n))$ to $\psi(X)$ and the uniform integrability of $(\psi(X_n))$. The latter is due the following observation 
    $$\sup_{n\in\NN}\EE\left[(\psi(X_n))^2\right] \leq C^2 \sup_{n\in\NN}\EE\left[(1+ |X_n|^m)^2\right] = C^2 \sup_{n\in\NN}\int (1+|x|^m)^2 d\mu_n(x) < \infty\,. $$
    The last inequality is due to the convergence of the moments \eqref{eq:momentConvProof}. 
\end{proof}

\begin{remark}
    Results of Lemma~\ref{lem:momentsTogeneral} can be extended to probability measures $\mu$ on $\RR^d$ by Cramér–Wold theorem, i.e. considering the push-forward probability measure $\mu_t$ by the map $x\mapsto \langle x, t \rangle$ for each $t\in \RR^d$.
\end{remark}
\begin{remark}
    We can also extend Lemma~\ref{lem:momentsTogeneral} to the case where $(\mu_n)$ and $(\nu_n)$ are sequences of random probability measure and where we replace both two convergence statements by convergence in probability formulations. The proof follows from the subsequence criterion \cite[Lemma 3.2]{kallenberg2002foundations}.
\end{remark}

\section{Polynomial approximation }
\label{app:polyApprox}
The following lemma states a basic density result of polynomial functions in the Hilbert space $L^2(\mu)$ where $\mu$ is a Gaussian measure. The polynomial approximation is shown to hold uniformly on certain sets of Gaussian measures $(\mu_{\sigma})_{\sigma\in S}$.

\begin{lemma}{\cite{HACHEM2024104276}}
\label{lem:polyApprox}
    Let $\cQ\subset \RR$ a compact set and $h:\RR \times \cQ \to \RR$ a function satisfying the following properties. (i) There exists a fixed number $L>0$ such that uniformly in $\eta\in \cQ$, 
    $$
    |h(x,\eta) - h(y,\eta)|\quad \le \quad L| x- y|\,,\qquad \forall (x,y)\in \RR^2\, .
    $$
    (ii) There exists a continuous non-decreasing function $\kappa:\RR^+\to \RR^+$ with $\kappa(0)=0$ such that $$
    |h(x,\eta) - h(x,\eta')| \quad \le \quad \kappa(|\eta- \eta'|) \left( 1+|x|\right)\,,\qquad \forall x\in \RR\,,\ \forall (\eta, \eta')\in \cQ^2\, .
    $$
    Let $0<\sigma_{\min} \leq \sigma_{\max}$ and $\varepsilon > 0$ be fixed, and $\xi\sim{\mathcal N}(0,1)$. 
    
    There exists a function $g_\varepsilon : \RR \times \cQ \to \RR$ such that for every $\eta \in \cQ$, $x\mapsto g_\varepsilon(x, \eta)$ is a polynomial, and uniformly in $\eta\in \cQ$ and $\sigma\in [\sigma_{\min}, \sigma_{\max}]$,
    \begin{equation*}
         \EE\left(h(\sigma\xi, \eta) - g_\varepsilon( \sigma\xi, \eta)\right)^2 \leq \varepsilon
        \quad \text{and} \quad \left|\EE \,\partial_x h( \sigma\xi, \eta) - \EE \,\partial_x g_\varepsilon( \sigma\xi, \eta)\right| \leq \varepsilon\ .
    \end{equation*}
\end{lemma}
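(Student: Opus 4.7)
The plan is to reduce the derivative estimate to an $L^2$ estimate by Gaussian integration by parts, to establish the $L^2$ bound uniformly in $\sigma$ (for each fixed $\eta$) by using Hermite density in $L^2$ of a single reference Gaussian and a Radon--Nikodym comparison, and finally to promote the bound to be uniform in $\eta$ via a finite covering of the compact $\cQ$ using the modulus of continuity $\kappa$.

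\textbf{Step 1 (derivative bound follows from the $L^2$ bound).} Fix $\eta$, $\sigma\in[\sigma_{\min},\sigma_{\max}]$, and any polynomial $g$. Apply Stein's lemma to $F(y) := h(\sigma y,\eta)-g(\sigma y,\eta)$, which is continuous, a.e.\ differentiable, and has at most polynomial growth; this gives $\EE[\xi F(\xi)] = \EE[F'(\xi)] = \sigma\, \EE[\partial_x h(\sigma\xi,\eta)-\partial_x g(\sigma\xi,\eta)]$. Cauchy--Schwarz then yields
\[
\bigl|\EE\,\partial_x h(\sigma\xi,\eta)-\EE\,\partial_x g(\sigma\xi,\eta)\bigr|
\;\leq\; \tfrac{1}{\sigma_{\min}}\,\bigl(\EE F(\xi)^2\bigr)^{1/2}.
\]
Hence, if we can construct $g_\varepsilon$ with $L^2$ error at most $\min(\varepsilon,\sigma_{\min}^2\varepsilon^2)$, both conclusions follow.

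\textbf{Step 2 ($L^2$ approximation uniform in $\sigma$, fixed $\eta$).} Let $\gamma_0$ denote $\cN(0,\sigma_{\max}^2)$. Since $h(\cdot,\eta)$ is Lipschitz in its first argument, it lies in $L^2(\gamma_0)$, and by completeness of Hermite polynomials in $L^2(\gamma_0)$ there exists a polynomial $g_\eta$ with $\int (h(\cdot,\eta)-g_\eta)^2\,d\gamma_0 \leq \varepsilon_1$. For any $\sigma\in[\sigma_{\min},\sigma_{\max}]$, a direct computation shows
\[
\frac{d\gamma_\sigma}{d\gamma_0}(x)
= \frac{\sigma_{\max}}{\sigma}\,\exp\!\Bigl(-\tfrac{x^2}{2}\bigl(\tfrac{1}{\sigma^2}-\tfrac{1}{\sigma_{\max}^2}\bigr)\Bigr)
\;\leq\; \frac{\sigma_{\max}}{\sigma_{\min}},
\]
since the exponent is nonpositive when $\sigma\leq\sigma_{\max}$. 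Consequently,
\[
\EE\bigl(h(\sigma\xi,\eta)-g_\eta(\sigma\xi)\bigr)^2
\;\leq\; \tfrac{\sigma_{\max}}{\sigma_{\min}}\,\varepsilon_1,
\]
uniformly in $\sigma\in[\sigma_{\min},\sigma_{\max}]$. Choosing $\varepsilon_1$ small (depending on $\varepsilon$ and $\sigma_{\min}$) yields any desired $L^2$ accuracy.

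\textbf{Step 3 (uniformity in $\eta\in\cQ$).} The modulus-of-continuity hypothesis gives, for any $\eta,\eta'\in\cQ$ and $\sigma\in[\sigma_{\min},\sigma_{\max}]$,
\[
\EE\bigl(h(\sigma\xi,\eta)-h(\sigma\xi,\eta')\bigr)^2
\;\leq\; \kappa(|\eta-\eta'|)^2\,\EE(1+\sigma_{\max}|\xi|)^2
\;\leq\; C\,\kappa(|\eta-\eta'|)^2.
\]
Pick $\delta>0$ with $C\kappa(\delta)^2 \leq \varepsilon/4$; then cover the compact set $\cQ$ by finitely many balls of radius $\delta$ around points $\eta_1,\ldots,\eta_N$. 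For each $k$, Step~2 yields a polynomial $g_k$ with $\EE(h(\sigma\xi,\eta_k)-g_k(\sigma\xi))^2 \leq \varepsilon/4$ uniformly in $\sigma$. Define $g_\varepsilon(\cdot,\eta):=g_{k(\eta)}$ where $k(\eta)$ is the index of any ball containing $\eta$. The triangle inequality in $L^2$ gives an $L^2$ error bounded by $\varepsilon$ uniformly in $\eta$ and $\sigma$, and Step~1 delivers the derivative estimate by tightening $\varepsilon$ to $\min(\varepsilon,\sigma_{\min}^2\varepsilon^2)$.

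\textbf{Main obstacle.} The most delicate point is achieving uniformity in $\sigma$ \emph{with a single polynomial}. Naive truncation to a compact interval is awkward because the polynomial degree interacts with the tail behavior. The Radon--Nikodym bound of Step~2 sidesteps this difficulty entirely: since $\gamma_\sigma$ is dominated by a fixed constant multiple of $\gamma_{\sigma_{\max}}$ on the whole line, $L^2$ approximation in the largest Gaussian automatically transfers to all smaller ones. With this in hand, the remaining steps are routine. Note also that the construction does not require $g_\varepsilon$ to be jointly measurable in $(x,\eta)$, only that $x\mapsto g_\varepsilon(x,\eta)$ be polynomial for each $\eta$, which is satisfied by the nearest-neighbor assignment.
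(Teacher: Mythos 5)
Your proof is correct and follows essentially the same strategy as the paper's: Stein's lemma to reduce the derivative bound to the $L^2$ bound, transfer across $\sigma\in[\sigma_{\min},\sigma_{\max}]$ via the ratio of Gaussian densities (which the paper phrases as a ``change of variable'' and you phrase as a Radon--Nikodym bound, but these are the same inequality), and a finite $\delta$-covering of the compact set $\cQ$ using the modulus of continuity $\kappa$. Performing the $\sigma$-transfer before the $\eta$-covering rather than after, as the paper does, is a harmless reordering.
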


\begin{proof} Let $\delta>0$ and consider a $\delta$-covering of the compact set $\cQ$ with balls centered in $\{\eta_k\}_{k\in [K]}$.
Fix $k\in [K]$ and consider the function $x\mapsto h(x, \eta_k)$. By the density of polynomials in the space $L^2(\cN(0, \sigma_{\max}^2))$, there exists a polynomial $x\mapsto g_\varepsilon(x, \eta_k)$ such that 
$$ \EE\left(h(\sigma_{\max}\xi, \eta_k) - g_\varepsilon(\sigma_{\max}\xi, \eta_k)\right)^2 \quad \leq\quad  \frac{\varepsilon}4\, .
$$
Let $\eta\in \cQ$ and $\eta_k$ such that $|\eta - \eta_k|\leq \delta$ and put $g_{\varepsilon}(x, \eta) := g_{\varepsilon}(x, \eta_k)$ for such $\eta$. By the properties of function $h$, we have
\begin{eqnarray*}
    \EE\left(h(\sigma_{\max}\xi, \eta) - g_\varepsilon(\sigma_{\max}\xi, \eta)\right)^2 &\leq& 2\EE\left(h(\sigma_{\max}\xi, \eta) - h(\sigma_{\max}\xi, \eta_k)\right)^2 \\&&\quad + 2\EE\left(h(\sigma_{\max}\xi, \eta_k) - g_\varepsilon(\sigma_{\max}\xi, \eta_k)\right)^2\,, \\
    &\leq& 2L^2 \kappa(\delta)^2 \EE\left(1+\sigma_{\max}|\xi|\right)^2 + \frac{\varepsilon}2\,.
\end{eqnarray*}
Using the properties of $\kappa$ we can choose $\delta>0$ small enough so that 
$$ 
\EE\left(h(\sigma_{\max}\xi, \eta) - g_\varepsilon(\sigma_{\max}\xi, \eta)\right)^2 \leq \varepsilon\, .
$$
Let $\sigma \in [\sigma_{\min}, \sigma_{\max}]$, denote $\varphi(x):=h(x, \eta) - g_\varepsilon(x, \eta)$. A change of variable yields 
$$ \EE\varphi(\sigma \xi)^2 \quad \leq \quad \frac{\sigma_{\max}}{\sigma_{\min}} \EE\varphi(\sigma_{\max} \xi)^2 \quad \leq \quad \frac{\sigma_{\max}}{\sigma_{\min}} \varepsilon\,.$$
By Stein's integration by parts lemma we also have 
$$
    \left|\EE\varphi^\prime(\sigma \xi)\right| \quad =\quad  \frac{1}{\sigma} \EE[\xi \varphi(\sigma \xi)]\quad 
    \leq \quad \frac{1}{\sigma_{\min}} \sqrt{\EE\varphi(\sigma \xi)^2} \quad \leq\quad  \sqrt{\frac{\sigma_{\max}}{(\sigma_{\min})^3}} \sqrt{\varepsilon}\, ,
$$
which concludes the proof.
\end{proof}

\section{Proof of Lemma~\ref{lem:Onsagerswitch}}
\label{app:Onsagerswitch}
\begin{proof}[Proof of Lemma~\ref{lem:Onsagerswitch}]
In this proof, we use the framework introduced in Section~\ref{subsec:treeStructure}.
Let us put $p_j := \partial p(\cx^t_j, \eta_j, t)$ as a simplification of the notations, the expectation can be developed as follows,

\begin{equation*}
    \begin{split}
       \mathbb{E}\left[ \left(\sum_{j\in [n]} \left(W_{ij}W_{ji} - V_{ij}\right)p_j \right)^4 \right] &= \sum_{j_1, j_2, j_3, j_4 \in [n]} \mathbb{E}\left[\left(\prod_{\ell = 1}^4\left(W_{ij_\ell}W_{j_\ell i}-V_{ij_\ell}\right)\right)p_{j_1}p_{j_2}p_{j_3}p_{j_4}\right] \\
       &:= \sum_{j_1, j_2, j_3, j_4 \in [n]} \mathbb{E}\varphi(j_1, j_2, j_3, j_4)\,, \\
    \end{split}
\end{equation*}
with $p_j$ having the following form 
$$ p_j = \sum_{\ell = 0}^{d-1} (1+\ell) \alpha_\ell (j,t) \left(\cx_j^t\right)^\ell, $$
notice now that by using Lemma~\ref{lem:xtzt}, we can easily see $p_j$ as a sum over unmarked trees with root type $j$, with depth at most $t$ and with each vertex having at most $d-1$ children, the weight of the trees (i.e. the terms $W(T)$, $\Tilde\Gamma(T)$ and $x(T)$) are the same as in Lemma~\ref{lem:xtzt}.
$$ p_j = \sum_{T \in \bar \cU_j^t} W(T)\Tilde{\Gamma}(T)x(T)\,. $$
Thus, the quantity $\varphi(j_1, j_2, j_3, j_4)$ above can be written as a sum over trees as follows: 
\begin{equation}
\label{eq:sum_trees}
\begin{split}
    \varphi(j_1, j_2, j_3, j_4) &=\sum_{\substack{(T_1, T_2, T_3, T_4)\in \\ \bar \cU_{j_1}^t \times\bar \cU_{j_2}^t \times\bar \cU_{j_3}^t \times\bar \cU_{j_4}^t}} \psi(T_1, T_2, T_3, T_4), \\
    \psi(T_1, T_2, T_3, T_4) & := \prod_{\ell=1}^4 \left(W_{ij_\ell}W_{j_\ell i} - V_{ij_\ell}\right)W(T_\ell)\Tilde\Gamma(T_\ell) x(T_\ell)\,. \\
\end{split}
\end{equation}
In the case where $j_1$, $j_2$, $j_3$ and $j_4$ are distinct, the above sum can interpreted as a sum over trees having the structure described in Figure~\ref{fig:proof_tree_structure}.
\begin{figure}[!ht]
    \centering
    \includegraphics[width=0.5\linewidth]{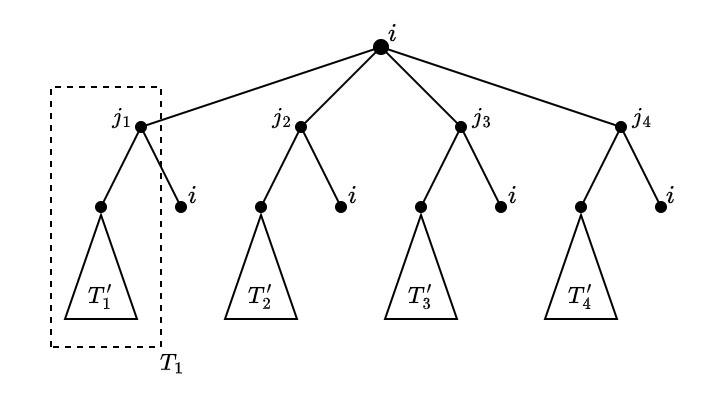}
    \caption{Tree structure.}
    \label{fig:proof_tree_structure}
\end{figure}
these are trees having a root of type $i$, this root has four children of types $j_1$, $j_2$, $j_3$ and $j_4$, each one of these four vertices has a child of type $i$ and is also the planted root of a tree of length $t-1$. Let us denote by $\cS_i$ the set of all these trees. Let $T \in \cS_i$ a tree parameterized by $(T_1, T_2, T_3, T_4)\in  \bar \cU_{j_1}^t \times\bar \cU_{j_2}^t \times\bar \cU_{j_3}^t \times\bar \cU_{j_4}^t$ and let $\mu$ be the number of edges of $T$, i.e.
$$ \mu = 8 + \sum_{\ell = 1}^4 |E(T_\ell)|. $$
Following the proof of Proposition~\ref{prop:ztzttilde}, we know that
\begin{equation}
\label{eq:bound_Epsi}
    \left|\EE\psi(T_1, T_2, T_3, T_4)\right| \leq C K_n^{-\mu/2},
\end{equation}
Let us now compute the number of non vanishing contributions in $\varphi(j_1, j_2, j_3, j_4)$. A term $\EE\psi(T_1, T_2, T_3, T_4)$ vanishes if there exists an $\ell = 1, 2, 3, 4$ such that neither the edge $(i\to j_\ell)$ nor $(j_\ell \to i)$ belongs to set of edges of the trees $T_1, \cdots, T_4$ or if there exists another edge in $T_1, \cdots, T_4$ which occurs once, in other words, if we consider the graph $G$ obtained by identifying the vertices of the same type in $T$ then $T$ has a non vanishing contribution if all the edges are covered in $G$ at least twice and the edges $\{(i,j_\ell) \ | \ \ell = 1, \cdots, 4\}$ at least three times, then:
$$ \mu \geq 2 \left(|E(G)| - 4\right) + 3\times 4 = 2 |E(G)| + 4. $$
Notice that $G$ is a connected graph (there exists a path from any vertex of $G$ to $i$), then
$$ |V(G)| \leq |E(G)| + 1 \leq \mu / 2 - 1. $$
The vertices except $\{i, j_1, j_2, j_3, j_4\}$ can have arbitrary types from a set of at most $CK_n$ types, so we get 
$$ |\EE \varphi(j_1, j_2, j_3, j_4)| \leq C K_n^{-\mu/2} K_n^{\mu/2 - 1 - 5} = C K_n^{-6}, $$
In addition, we have $\begin{pmatrix}
    K_n \\ 4
\end{pmatrix}\leq CK_n^4$ choices for quadruples $(j_1, j_2, j_3, j_4)$ with distinct elements, this means that 
$$ \sum_{\substack{j_1, j_2, j_3, j_4 \in [n] \\ \text{distinct}} } |\mathbb{E}\varphi(j_1, j_2, j_3, j_4)| \leq CK_n^{-2}. $$
A similar argument can be used to analyze the other cases where $j_1, j_2, j_3, j_4$ are not necessarily distinct.

\end{proof}

\section{Proof of Proposition~\ref{prop:spectralNormBound}}
\label{app:spectralnormbound}
We begin by decoupling the entries of our random matrix $W$ using triangular inequality twice  
$$ \left(\EE\lVert W\rVert^p\right)^{1/p} \leq \left(\EE(\lVert U\rVert + \lVert L\rVert)^p\right)^{1/p} \leq \left(\EE\lVert U\rVert^p\right)^{1/p} + \left(\EE\lVert L\rVert^p\right)^{1/p} , $$
where $U$ and $L$ are $n\times n$ triangular matrices corresponding to the upper part (including diagonal) and lower part of $W$ respectively. Notice that $U$ can be seen as an $n\times n$ random matrix with independent entries having the following variance profile 
$$ s^{u}_{ij} = \left\{
    \begin{array}{ll}
        s_{ij} & \mbox{if} \ i\leq j \\
        0 & \mbox{otherwise.}
    \end{array}
\right. $$
Following the notations of \cite{Bandeira_2016} we define
\begin{equation*}
    \sigma_1 = \max_{i} \left(\sum_{j\geq i} s_{ij}\right)^{1/2}, \quad \sigma_2 = \max_{j} \left(\sum_{i\leq j} s_{ij}\right)^{1/2}, \quad \sigma_\ast = \max_{i\leq j} \sqrt{s_{ij}}\,.
\end{equation*}
Now using the results of \cite{Bandeira_2016} we get 
\begin{equation*}
\begin{split}
    \left(\EE \lVert U\rVert^{2\log(n)}\right)^{1/2\log(n)} &\lesssim \sigma_1 + \sigma_2 + \sigma_\ast (\log(n))^{(\rho \vee 1)/2}\\
    &\lesssim 1 + \sqrt{\frac{(\log(n))^{\rho \vee 1}}{K_n}}. \\
\end{split}
\end{equation*}
Using assumption A-\ref{ass:S} we get $\left(\EE \lVert U\rVert^{2\log(n)}\right)^{1/2\log(n)} \leq C$ and with a similar treatment to $L$ we finally get $\left(\EE \lVert W\rVert^{2\log(n)}\right)^{1/2\log(n)} \leq C$. Using Markov's inequality, 
$$ \PP\left[\lVert W\rVert \geq Ce\right] \leq \frac{1}{n^2}. $$
Finally, using Borel-Cantelli's lemma we get 
$$ \PP\left[\sup_{n}\lVert W\rVert <\infty\right] = 1. $$

\section{Proof of Lemma~\ref{lem:boundVariance}}
\label{app:boundvariance}
    We  prove both results by induction on $t$. The proof of the first item is very similar to \cite[Lemma 1]{HACHEM2024104276} and thus will be omitted. Let us now prove the second item.
     For $t= 1$ we have $R_i^1(1,1) = \sum_{\ell = 1}^n s_{i\ell} \left(h(x_\ell^0, \eta_\ell, 0)\right)^2 \geq \inf_{n\in\NN}\inf_{i\in [n]} \left(h(x_i^0, \eta_i, 0)\right)^2 \sum_{\ell = 1}^n s_{i\ell},$
        using assumptions A-\ref{ass:S}, A-\ref{ass:initial_point} and A-\ref{ass:nonDegen}-(1) we get the result. Suppose now that that exists $c>0$ such that 
        $$\forall n\in \NN, \forall i\in [n], \  \sigma_i :=\sqrt{ R^t_i(t,t)}\geq c.$$
        Let $\xi \sim \cN(0, 1)$, we can write \begin{align*}
            R_i^{t+1}(t+1, t+1) &= \sum_{\ell = 1}^n s_{i\ell} \EE \left(h(Z_\ell^t, \eta_\ell, t)\right)^2 
            =  \sum_{\ell = 1}^n s_{i\ell} \EE \left(h(\sigma_\ell \xi, \eta_\ell, t)\right)^2\\
            &\geq \EE \left(h(\sigma_\star \xi, \eta_\star, t)\right)^2 \sum_{\ell = 1}^n s_{i\ell},
        \end{align*}
    where $(\sigma_\star, \eta_\star)$ is such that $\EE \left(h(\sigma_\star \xi, \eta_\star, t)\right)^2 = \min_{\ell\in [n]} \EE \left(h(\sigma_\ell \xi, \eta_\ell, t)\right)^2$. Let $D>0$ be as in A-\ref{ass:nonDegen}-(2), using the induction hypothesis and the previous result we can see that $0< c \leq \sigma_\star \leq C$, using this gives the following 
    \begin{align*}
        \EE \left(h(\sigma_\star \xi, \eta_\star, t)\right)^2 &= \frac{1}{\sigma_\star\sqrt{2\pi}}\int_\RR \left(h(x, \eta_\star, t)\right)^2 \exp(-x^2 / 2 \sigma_\star^2)dx \\
        & \geq \frac{1}{C\sqrt{2\pi}} \int_{[-D, D]} \left(h(x, \eta_\star, t)\right)^2 \exp(-x^2 / 2\sigma_\star^2) dx \\
        & \geq \frac{\exp(-D^2 / 2\sigma_\star^2)}{C\sqrt{2\pi}}\int_{[-D, D]} \left(h(x, \eta_\star, t)\right)^2 dx \\
        & \geq \frac{\exp(-D^2 / 2c^2)}{C\sqrt{2\pi}} \inf_{\eta\in \cQ_{\eta}} \int_{[-D, D]} \left(h(x, \eta, t)\right)^2 dx\,.
    \end{align*}
    Finally assumption A-\ref{ass:nonDegen}-(2) gives the result.

\section{Hölder continuity of the squared root}
\label{app:srholdercont}

\begin{lemma}
    The function $X \mapsto X^{1/2}$ is $\frac{1}{2}$-Hölder continuous on $\mathcal{S}^n_+$ (the set of symmetric positive matrices).
\end{lemma}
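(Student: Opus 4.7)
The plan is to establish the sharper operator-norm inequality
$$\|X^{1/2} - Y^{1/2}\|\ \leq\ \|X-Y\|^{1/2},$$
for all $X,Y \in \mathcal{S}^n_+$, which immediately gives $\tfrac12$-Hölder continuity with constant $1$. The argument rests on one classical non-trivial ingredient, namely operator monotonicity of the square root (the Loewner--Heinz theorem): if $0 \le A \le B$ for $A,B \in \mathcal{S}^n_+$, then $A^{1/2} \le B^{1/2}$. I will assume this fact and deduce the inequality by a short sandwich argument.

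First I would set $t := \|X-Y\|$. Since $X-Y$ is symmetric, the operator-norm bound translates into the two-sided operator inequality $-tI \preceq X-Y \preceq tI$, equivalently
$$X\ \preceq\ Y + tI \qquad \textrm{and}\qquad Y\ \preceq\ X + tI.$$
The key observation is then the scalar identity at the operator level:
$$\bigl(Y^{1/2} + \sqrt{t}\,I\bigr)^{2}\ =\ Y + 2\sqrt{t}\,Y^{1/2} + tI\ \succeq\ Y + tI,$$
where the last inequality uses $Y^{1/2} \succeq 0$. Applying operator monotonicity of the square root to this inequality yields
$$(Y+tI)^{1/2}\ \preceq\ Y^{1/2} + \sqrt{t}\,I.$$

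Next, I apply operator monotonicity a second time, this time to $X \preceq Y + tI$, to obtain $X^{1/2} \preceq (Y+tI)^{1/2}$, and chain the two estimates:
$$X^{1/2}\ \preceq\ Y^{1/2} + \sqrt{t}\,I.$$
The same argument with the roles of $X$ and $Y$ exchanged gives $Y^{1/2} \preceq X^{1/2} + \sqrt{t}\,I$. Combining both, the symmetric matrix $X^{1/2} - Y^{1/2}$ satisfies $-\sqrt{t}\,I \preceq X^{1/2} - Y^{1/2} \preceq \sqrt{t}\,I$, so all its eigenvalues lie in $[-\sqrt{t},\sqrt{t}]$, hence $\|X^{1/2}-Y^{1/2}\| \le \sqrt{t} = \|X-Y\|^{1/2}$.

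The only step that requires a genuine tool beyond elementary manipulations is operator monotonicity of $s \mapsto s^{1/2}$; since this is a standard result, there is no real obstacle. Should one prefer a self-contained argument avoiding Loewner--Heinz altogether, an alternative route is to use the integral representation $X^{1/2} = \tfrac{1}{\pi}\int_0^\infty X(tI+X)^{-1} t^{-1/2}\,dt$ and the resolvent identity to write $X^{1/2} - Y^{1/2}$ as an integral involving $(tI+X)^{-1}(X-Y)(tI+Y)^{-1}$; splitting the integral at $M = \|X-Y\|/2$ and using the trivial bounds $\|(tI+X)^{-1}\| \le 1/t$ and $\|X(tI+X)^{-1}\| \le 1$ recovers the same Hölder estimate (with a worse constant), so either route concludes the proof.
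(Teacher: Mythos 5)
Your argument is correct, but it goes by a genuinely different route from the paper. You rely on the Loewner--Heinz theorem (operator monotonicity of $s\mapsto s^{1/2}$): from $\|X-Y\|=t$ you deduce $X\preceq Y+tI$, note $(Y^{1/2}+\sqrt{t}I)^2\succeq Y+tI$, and apply monotonicity twice to get $X^{1/2}\preceq Y^{1/2}+\sqrt{t}I$, then symmetrize. The paper instead proves the equivalent inequality $\|A-B\|^2\le\|A^2-B^2\|$ by a completely elementary sandwich: pick a unit eigenvector $u$ of $A-B$ for a top eigenvalue $\lambda$ (WLOG $\lambda\ge 0$), expand $A^2-B^2=(A-B)^2+B(A-B)+(A-B)B$, and evaluate the quadratic form at $u$ to get $u^\top(A^2-B^2)u=\lambda^2+2\lambda\,u^\top Bu\ge\lambda^2$, using only $B\succeq 0$. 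Both yield the sharp constant $1$; yours is shorter given the black box but invokes a nontrivial classical theorem, while the paper's is fully self-contained and uses nothing beyond spectral decomposition and positivity. Your alternative integral-representation route is also sound, though as you note it gives a worse constant. One small point worth making explicit in a final write-up: in the step ``$(Y^{1/2}+\sqrt{t}I)^2\succeq Y+tI$ implies $(Y+tI)^{1/2}\preceq Y^{1/2}+\sqrt{t}I$'' you are again invoking operator monotonicity of the square root together with the fact that $Y^{1/2}+\sqrt{t}I$ is the positive square root of its square; this is fine, but you should state it since it is a second (not just a first) use of Loewner--Heinz.
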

\begin{proof}
    Let $A, B \in \mathcal{S}^n_+$, it suffices to show the following inequality,
    $$ \lVert A-B\rVert^2 \leq \lVert A^2-B^2\rVert. $$
    Let $\lambda$ be an eigenvalue of $A-B$ such that $|\lambda| = \lVert A-B\rVert$, then there exists $u\in \mathbb{R}^n$ of norm $1$ such that
    $$ (A-B)u = \lambda u .$$
    We can write the following
    $$A^2 - B^2 = (A-B)^2 + B(A-B) + (A-B)B,$$
    taking the quadratic form of this matrix at $u$ gives 
    $$ \lVert A^2 - B^2 \rVert \geq u^\top (A^2-B^2) u =  \lambda^2 + 2\lambda u^\top B u \,.  $$
    We can assume without loss of generality that $\lambda \geq 0$, having that $u^\top B u \geq 0$ gives 
    $$\lVert A^2 - B^2 \rVert \geq  \lambda^2 + 2\lambda u^\top B u \geq \lambda^2 = \lVert A-B\rVert^2. $$
\end{proof}
This result is used in the proof of Lemma~\ref{lem:RRtilde}.


\newcommand{\etalchar}[1]{$^{#1}$}

\end{document}